\documentclass[12pt]{amsart}
\usepackage[english]{babel}
\usepackage[utf8]{inputenc}
\usepackage[centering, margin={1in, 0.5in}, includeheadfoot]{geometry}
\usepackage{mathtools}
\usepackage[normalem]{ulem}
\usepackage{array}
\usepackage{tabularx}
\usepackage{graphicx}
\usepackage{amsmath}
\usepackage{amssymb}
\usepackage{amsthm}
\usepackage{enumerate}
\usepackage{color}
\usepackage{xcolor}
\definecolor{myred}{rgb}{0.2,0,0}
\definecolor{myblue}{rgb}{0,0,0.6}
\definecolor{mygreen}{rgb}{0,0.2,0}
\usepackage[colorlinks=true,linkcolor=myred,citecolor=myblue,urlcolor=mygreen]{hyperref}
\usepackage{enumitem}
\usepackage{bm}
\usepackage{bbm}

\usepackage{scalerel}

\numberwithin{equation}{section}

\allowdisplaybreaks

\newcommand{\Z}{\mathbb{Z}}
\newcommand{\R}{\mathbb{R}}

\newcommand{\comment}[1]{}

\DeclareMathOperator{\eps}{\epsilon}

\newcommand{\norm}[1]{\left\| #1 \right\|}

\renewcommand{\le}{\operatorname{\leqslant}}
\renewcommand{\ge}{\operatorname{\geqslant}}

\newcommand{\Id}{\mathbbm{1}}

\newtheorem{theorem}{Theorem}[section]
\newtheorem{lemma}[theorem]{Lemma}
\newtheorem{corollary}[theorem]{Corollary}
\newtheorem{proposition}[theorem]{Proposition}

\theoremstyle{definition}

\newtheorem*{definition*}{Definition} 

\theoremstyle{remark}

\begin{document}
	\selectlanguage{english}
    
\title{Pair Correlation of $\alpha n^{\theta}$ for Random $\theta$}

\author{Andrei Shubin}
\address{Institute of Analysis and Number Theory, TU Graz, Austria}
\email{andrei.shubin@tugraz.at}

\maketitle

\begin{abstract}

For fixed $\alpha>0$, we show that the sequence $\{\alpha n^{\theta}\}$ has Poissonian pair correlation for Lebesgue-almost all $\theta \in (0,\frac{3}{5})\cup(3,\infty)$. This improves a result of Technau and Yesha, who proved the same for almost all $\theta>7$.

The approach of Technau and Yesha was based on a repulsion principle, which roughly allows one to estimate the variance of the pair correlation function using the fourth derivative of the phase. In our approach, we split the $\theta$-integration in the variance into many short intervals and show that most of the integrals can be estimated using the first derivative. The problem is then reduced to several counting estimates, which we prove using moments of the Riemann zeta function and exponent pairs.

\end{abstract}

\section{Introduction}

A sequence $(x_n)_{n\in\mathbb{N}}$ of real numbers is said to be \emph{uniformly distributed modulo $1$} if for every interval $(a,b)\subseteq [0,1]$ one has
\[
    \frac{1}{N}\# \big\{1\le n\le N:\{x_n\}\in (a,b) \big\} \to b-a
    \qquad \text{as } N\to\infty.
\]
By classical works of Weyl, Fej\'er, and Csillag~\cite{Weyl, Csillag, KN74}, the sequences $(\alpha n^{\theta})_{n}$ are uniformly distributed modulo $1$ whenever $\alpha\neq 0$ and $\theta>0$ is non-integer, and whenever $\theta>0$ and~$\alpha$ is irrational.

The study of finer-scale statistics of such sequences was initiated by Rudnick and Sarnak~\cite{RS98}. A sequence $(x_n)_{n\in\mathbb{N}}$ is said to have \emph{Poissonian pair correlation} (PPC) if for every fixed $s>0$,
\[
    \frac{1}{N}\#\Bigl\{1\le m\neq n\le N:\|x_m-x_n\|\le \frac{s}{N}\Bigr\}\to 2s
    \qquad \text{as } N\to\infty.
\]
This property implies uniform distribution~\cite{ALP, GL}, and for i.i.d.\ random variables uniformly distributed on $[0,1)$ it holds almost surely. It is natural to conjecture that PPC should hold for the sequences $(\alpha n^{\theta})_n$ whenever $\alpha\neq 0$, $\theta>0$, $\theta\notin \mathbb{Z}$, and $\theta\neq \frac12$. For integer powers $\theta \ge 2$, it is conjectured under additional diophantine conditions on $\alpha$~\cite{Rudnick_Sarnak_Zaharescu}. The case of integer powers is particularly interesting in view of the Berry--Tabor conjecture~\cite{Berry_Tabor}.

At present, PPC is known for the sequences $(\alpha n^{\theta})_n$ for every $\alpha>0$ and $\theta<\frac{43}{117}$, by the work of Radziwi\l\l\ and the author~\cite{Shubin_Radziwill} and the earlier work of Lutsko, Sourmelidis, and Technau~\cite{LST}. Higher-order correlations for $(\alpha n^{\theta})_n$ are also known to be Poissonian with the admissible range for $\theta$ shrinking to zero as the order of correlation grows~\cite{LT_higher}. PPC was also established for the sequence $(\sqrt{n})_n$, after removing the squares, by El-Baz, Marklof, and Vinogradov~\cite{EMV}, although the gap distribution of $(\sqrt{n})_n$ is non-standard~\cite{Elk_McM, Browning_Vinogradov, Radziwill_Technau_sqrt} and higher-order correlations do not exist. Examples in which all correlations are known to be Poissonian include slowly growing sequences $(\alpha(\log n)^A)_n$ with $\alpha>0$ and $A>1$, due to Lutsko and Technau~\cite{Lutsko_Tech_log}, and sequences $(\alpha n)_n$ along rough numbers with badly approximable $\alpha$, due to a recent work of Hauke~\cite{Hauke_bad_rough}. We note that the full Kronecker sequence $(\alpha n)_n$ does not have PPC, by the three-gap theorem (see~\cite{LS20}).

Another direction of research concerns \emph{metric} pair correlation (MPPC), where one fixes a sequence $(a_n)_n$ and asks whether $(\alpha a_n)_n$ has PPC for Lebesgue-almost all $\alpha$. In this setting the problem is often more tractable for rapidly growing sequences $(a_n)_n$. Rudnick and Sarnak~\cite{RS98} proved MPPC for $(\alpha n^d)_n$ with integer $d\ge2$; in the quadratic case, alternative proofs were later given by Marklof and Str\"ombergsson~\cite{Mark_Strom} and by Heath-Brown~\cite{HB2010}. MPPC is also known for all real-valued lacunary sequences $(a_n)_n$~\cite{Rudnick_Zaharescu_lac, Rudnick_Technau_lac}. A general sufficient condition for MPPC in terms of a counting problem was obtained by Rudnick and Technau~\cite{Rudnick_Technau_lac}. Aistleitner, El-Baz, and Munsch~\cite{AEBM} introduced a different sufficient condition, based on estimates for moments of the Riemann zeta function and a variant of additive energy (their condition was recently sharpened by Kerr and Wang~\cite{Kerr_Wang}), and showed MPPC for $(\alpha n^{\theta})_n$ for every fixed $\theta>1$. Rudnick and Technau~\cite{RT} complemented this by proving MPPC for all $0<\theta<1$. Both results~\cite{AEBM, RT} rely on the energy bound for $(n^{\theta})_n$ due to Robert and Sargos~\cite{Robert-Sargos}. Very recently, higher-order correlations of $(\alpha n^d)_n$ were established for almost all $\alpha$ when $d$ grows with the order~\cite{LRT26}.

In this work we consider a different metric problem, in which $\alpha>0$ is fixed and $\theta$ is randomized. On a technical level, this problem appears to be more difficult than the corresponding $\alpha$-metric problem, because the variance integral with respect to $\theta$ is harder to control: the phase is no longer linear in the parameter and may have intricate stationary points structure that is difficult to handle. A first result in this direction was obtained by Technau and Yesha~\cite{Technau_Yesha}, who proved that $(n^{\theta})_n$ has PPC for Lebesgue-almost all $\theta>7$ and established higher-order correlations for sufficiently large $\theta$ depending on the order, using a \emph{repulsion principle}. Their argument extends directly to $(\alpha n^{\theta})_n$ for any fixed $\alpha>0$. A similar approach was later used by Aistleitner, Baker, Technau, and Yesha~\cite{ABTY} to prove Poissonian correlations of all orders for a broad class of super-polynomial sequences of the form $(\exp(\beta a_n))_n$ for almost all $\beta>0$ (as a consequence, the same was established for the sequences $(\beta^n)_n$ for almost all $\beta > 1$, refining the old result of Koksma~\cite{Koksma_alpha_n}).

Going back to $(\alpha n^{\theta})_n$, it seems that, as in the $\alpha$-metric case, the $\theta$-metric problem becomes harder near the transition point $\theta=1$. Moreover, Poisson summation reveals a certain symmetry between the regimes $\theta>1$ and $0<\theta<1$. Our main result is the following.

\begin{theorem} \label{main_thm}
Let $\alpha>0$ be fixed. Then the sequence $(\alpha n^{\theta})_{n\ge1}$ has Poissonian pair correlation for Lebesgue-almost all $\theta>3$ and for Lebesgue-almost all $\theta\in (0,\frac{3}{5})$.
\end{theorem}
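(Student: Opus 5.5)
We follow the standard variance method for metric pair correlation, this time averaging over $\theta$ instead of $\alpha$. Fix a compact interval $I=[a,b]$ lying in $(3,\infty)$ or in $(0,\frac35)$, and a smooth compactly supported test function $f$. Write
\[
R_N(\theta)=\frac1N\sum_{m\neq n\le N}\sum_{k\in\Z} f\bigl(N(\alpha m^\theta-\alpha n^\theta+k)\bigr).
\]
By Poisson summation,
\[
R_N(\theta)=\frac1{N^2}\sum_{m\ne n}\sum_{j\in\Z}\hat f\bigl(\tfrac jN\bigr)e\bigl(j\alpha(m^\theta-n^\theta)\bigr).
\]
The $j=0$ term gives the Poissonian main term $\hat f(0)(1-\frac1N)$. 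The task is therefore to show that
\[
\int_I \bigl|R_N(\theta)-\hat f(0)\bigr|^2\,d\theta\ll N^{-\delta}
\]
for some $\delta>0$. Once this holds, Chebyshev's inequality and Borel--Cantelli along $N_k=\lfloor k^{A}\rfloor$ give almost-sure convergence along the subsequence. The usual monotonicity/sandwich argument for indicator test functions then extends it to all $N$. A countable union over rational $s$ and over intervals $I$ finishes the proof.

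Expanding the square, the variance becomes
\[
\frac{1}{N^4}\sum_{j_1,j_2}\hat f\bigl(\tfrac{j_1}N\bigr)\overline{\hat f\bigl(\tfrac{j_2}N\bigr)}\sum_{m_1\ne n_1}\sum_{m_2\ne n_2}\int_I e\bigl(\phi(\theta)\bigr)\,w(\theta)\,d\theta,
\]
where $w$ is a smooth cutoff on $I$ and the phase is
\[
\phi(\theta)=\alpha\bigl(j_1(m_1^\theta-n_1^\theta)-j_2(m_2^\theta-n_2^\theta)\bigr).
\]
The sums effectively run over $1\le|j_i|\le N^{1+\epsilon}$. Unlike Technau--Yesha, we will not use a fourth-derivative repulsion principle over all of $I$. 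Instead we cut $I$ into short intervals $J$ of length $\asymp 1/\log N$, or a slightly smaller power-of-log length. On such an interval each $m^\theta$ is essentially $m^{\theta_0}e^{(\theta-\theta_0)\log m}$, and
\[
\phi'(\theta)=\alpha\sum \pm j\,m^\theta\log m
\]
is roughly monotone in a controlled way. Integration by parts once then gives a bound $\ll 1/|\phi'|$ unless $|\phi'(\theta)|$ is small somewhere on $J$. So for each $J$ we bound the contribution by the number of tuples $(j_1,j_2,m_1,n_1,m_2,n_2)$ with
\[
\bigl|j_1(m_1^{\theta_0}\log m_1-n_1^{\theta_0}\log n_1)-j_2(\cdots)\bigr|\le \Delta,
\]
with $\Delta$ a small power of $N$, plus a tail. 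Summing dyadically in the size of $\phi'$ reduces the variance bound to counting estimates of additive-energy type for the weighted sequence $(m^{\theta}\log m)$, uniformly for $\theta\in I$.

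We prove these counting estimates in the following order. First, we dispose of diagonal-type configurations, such as $\{m_1,n_1\}=\{m_2,n_2\}$ with $j_1=j_2$, which are trivially $O(N^{3+\epsilon})$ and therefore acceptable after the $N^{-4}$ normalization. Next, for generic tuples, we detect the near-equation with a smooth kernel in Fourier form $\int \hat K(t)\,e(t\,\cdot)\,dt$. This turns the count into mean values of exponential sums $\sum_{n\sim M} n^{it}$-like or $\sum_n e(t n^\theta)$ sums. In the regime $\theta<\frac35$ (after Poisson summation, the dual regime), we bound these mean values via fourth and twelfth moments of $\zeta(\tfrac12+it)$ over short intervals. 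In the regime $\theta>3$, we use van der Corput/exponent pair bounds for $\sum_{n\sim M} e(t n^\theta)$.

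The main obstacle is the range where $j_1\asymp j_2\asymp N$ and $m_i,n_i\asymp N$. There the trivial count barely fails, and the exponents $3$ and $\frac35$ come from balancing the exponent pair (or zeta moment) saving against the loss from the short-interval splitting. I would first try the pair $(\frac12,\frac12)$ together with the fourth moment, and then optimize with $B$-process iterates. I expect the thresholds $\theta>3$ and $\theta<\frac35$ to be exactly where the chosen exponent pair makes the off-diagonal count $o(N^{4})$ by a power.
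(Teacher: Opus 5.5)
There is a genuine gap at the short-interval step, and it also leads you to misidentify where the thresholds come from. On an interval $J$ of length $\asymp 1/\log N$ (or any power of $\log N$) the phase is far from linear. In the critical range, $|\phi''|$ is typically of size $N^{1+\theta}\log N$ even at a zero of $\phi'$, and up to $N^{1+\theta}(\log N)^2$ elsewhere. So across $J$ the derivative $\phi'$ moves by $\gg N^{1+\theta}$, a $1/\log N$ fraction of its typical size $N^{1+\theta}\log N$. Consequently, ``$|\phi'|$ is small somewhere on $J$'' cannot be replaced by a condition $|\phi'(\theta_0)|\le\Delta$ with a power-saving $\Delta$. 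Roughly a $1/\log N$ proportion of all $\asymp N^6$ tuples have a genuine stationary point inside $J$, and the first-derivative bound is useless for them. Bounding their integrals by $|J|$ costs about $N^{-4}\cdot N^{6}(\log N)^{-1}\cdot(\log N)^{-1}$ per interval, i.e.\ $\approx N^2/\log N$ in total. That is essentially no better than the trivial $O(N^2)$ bound for the variance. To handle these tuples you would need a second-derivative test plus a count of tuples where $\phi''$ is also small. Your plan does not address this, and it is exactly what the paper avoids.

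The missing idea is to make the pieces so short that the phase is genuinely linear on each. The paper takes pieces of length $\varepsilon_0/H$ with $H\approx (KY^{\theta_B-1}R)^{1/2+\varepsilon}\approx N^{(1+\theta)/2+\varepsilon}$, so that $|J|^2\max|\phi''|=o(1)$. Then $\int_J e(\phi)\ll\min(1/H,1/|\phi'(\theta_i)|)$, which legitimately reduces everything to counting $|E_1(\theta_i)|\le\Delta N^{1+\theta_i}$ at a single grid point.

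The thresholds then come from the number of pieces, not from the exponential-sum input. The count is $\ll N^{4-\kappa}+\Delta N^6$, and the term $\Delta N^6$ is unavoidable. It contributes $N^{-4}\cdot\Delta N^6\cdot(\Delta N^{1+\theta})^{-1}=N^{1-\theta}$ per piece, and multiplying by $H$ gives $N^{(3-\theta)/2}$, whence $\theta>3$. In the dual problem after triple Poisson summation (normalization $N^{-1-3\theta}$, count $N^{(4+2/3)\theta}+\Delta N^{6\theta}$), the same bookkeeping gives $\theta<\frac35$. So no choice of exponent pair or zeta moment determines $3$ and $\frac35$ in your scheme.

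Separately, for $\theta>3$ the power-saving part of the count already needs inputs close to the best available. The paper uses the eighth moment of $\zeta$, Bourgain's exponent pair, and a Robert--Sargos-type energy bound for $n^\theta\log n$, so starting from $(\frac12,\frac12)$ and the fourth moment is unlikely to suffice. For $\theta<\frac35$, the paper's count uses Poisson summation in $\ell$ and the second-derivative test rather than zeta moments.
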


Our approaches for large and small $\theta$ are quite similar. Triple Poisson summation applied to the pair correlation function lifts the problem from the regime $\theta>1$ to the regime $\theta<1$. In fact, the symmetry between them is better seen after only a double Poisson summation. Then, for example, by applying the repulsion principle of Technau and Yesha on both sides, one could extend the result of~\cite{Technau_Yesha} to almost all $\theta\in (0,\frac17)\cup(7,\infty)$, which is of course no longer interesting in view of~\cite{LST, Shubin_Radziwill}. 

We speculate that, if one could efficiently exploit the second derivative test instead of the fourth one (coming from the repulsion principle), then one could similarly reach the range $\theta\in (0,\frac13)\cup(3,\infty)$ (i.e. by showing that, for most oscillatory integrals appearing in the variance, the second derivative of the phase is large, where ``most'' needs to be quantified). Furthermore, if one could use only the first derivative test, which would mean that stationary points are absent in most integrals, one could obtain the full range $\theta\in(0,\infty)$. Our method, based on very short integrals on which stationary points do not play a role, essentially recovers the strength of the second derivative test. The reason we obtain $\frac35$ instead of $\frac13$ is that the third Poisson summation partially breaks the symmetry and yields additional savings. Interestingly, the exponential-sum and moment estimates required for $\theta<\frac35$ are milder than those required for the dual sums and moments when $\theta>3$, where we come close to using the sharpest bounds currently available. We note that the thresholds $3$ and $\frac{3}{5}$ themselves are independent of these bounds and seem to be the actual limitation of our method: they arise from the number of short integrals in the decomposition. To go beyond them, one would likely have to work with longer integration ranges and take the stationary points into account.

We remark finally that Technau and Yesha also established Poissonian $m$-point correlations for almost all $\theta>4m^2-4m-1$. The present method would likely allow one to improve these ranges as well, but we do not pursue this here.

\subsection*{Plan of the paper}

In Section~2 we provide an outline of the proof of Theorem~\ref{main_thm}. In Section~3 we collect the necessary auxiliary results on exponential sums and moments. Sections~4 and~5 contain the main parts of the proof for large and small $\theta$, respectively. In Sections~6 and~7 we prove the required counting estimates.

\subsection*{Notation}

We write $e(x):=e^{2\pi i x}$.
The relations $f(x)\ll g(x)$ and $f(x)=O(g(x))$ mean that $|f(x)|\le Cg(x)$ for some constant $C>0$ and all sufficiently large $x$, where $g(x)>0$. The notation $f(x)=o(g(x))$, with $g(x)>0$, means that $f(x)/g(x)\to 0$ as $x\to\infty$.

For real-valued functions $f$ and $g$, we write $f(x)\asymp g(x)$ if there exist constants $c,C>0$ such that
\[
    c|g(x)| \le |f(x)| \le C|g(x)|
\]
for all sufficiently large $x$, and moreover $f(x)$ and $g(x)$ have the same sign for all sufficiently large $x$. We write $f(x)\sim g(x)$ if $g(x)\neq 0$ for all sufficiently large $x$ and $f(x)/g(x)\to 1$ as $x\to\infty$. We write $\norm{x}$ for the distance from $x$ to the nearest integer.

For brevity, the ranges in multiple sums, such as $\sum_{K,Y_1,Y_2,R}$, are often omitted. Likewise, we write
\[
    \sum_{a\asymp A}
\]
to denote a sum over a range of the form $c_1A<a\le c_2A$, where the positive constants $c_1,c_2$ may vary from line to line, but are not important for the argument. Similarly, an arbitrarily small $\eps>0$ may vary from line to line in some proofs.

\subsection*{Acknowledgements}

This research was funded in whole or in part by the Austrian Science Fund (FWF). Throughout this work, the author was supported by the grants 10.55776/ESP531, 10.55776/I5554, and the FWF--ANR project Arithrand (I 4945-N and ANR-20-CE91-0006). In addition, part of this work was supported by the Swedish Research Council under grant no.~2021-06594 while the author was in residence at the Institut Mittag-Leffler in Djursholm, Sweden, during the analytic number theory program in 2024. I thank Christoph Aistleitner, Maksym Radziwi\l\l, Niclas Technau, Athanasios Sourmelidis, Eduard Stefanescu, Agamemnon Zafeiropoulos, and Manuel Hauke for helpful conversations.

\section{Proof outline}

\subsection*{Reduction to oscillatory integrals}

Let $f \in C_c^\infty(\R)$ be even and non-negative. By a standard argument, it is enough to prove the convergence of the smoothed pair correlation function
\[
    R_{f,N}(\theta) :=
    \frac{1}{N}\sum_{1\le m\neq n\le N} \sum_{k\in\Z}
    f \Big( N (\alpha m^{\theta} - \alpha n^{\theta} + k) \Big)
    \longrightarrow \int_{\R} f(x)\,dx
\]
for almost all $\theta$ in the relevant range. Applying Poisson summation and separating the zero frequency, this reduces to showing that
\begin{equation} \label{main_criterion}
    \frac{1}{N^2}\sum_{1\le k\le N^{1+\varepsilon}}
    \widehat{f} \Big(\frac{k}{N}\Big)
    \sum_{1\le m \neq n\le N} e \big(\alpha k (m^{\theta} -  n^\theta)\big) = o(1)
\end{equation}
for almost all $\theta$.

Next, fix an interval $I=(\theta_A,\theta_B)$ contained in either $(3,\infty)$ or $(0,\frac35)$ and consider the variance
\[
    V_f(N;I) := \int_I \Big|
    R_{f,N}(\theta)-\int_{\R}f(x)\,dx
    \Big|^2 \,d\theta.
\]
A bound of the form
\begin{equation} \label{main_goal}
    V_f(N;I)\ll N^{-\delta}
\end{equation}
with some $\delta>0$ is sufficient for almost-everywhere convergence. Indeed, if we choose a subsequence $N_j=\lfloor j^A\rfloor$ with $A\delta>1$, then~\eqref{main_goal} implies
\[
    \sum_{j=1}^\infty V_f(N_j;I)<\infty,
\]
and Chebyshev's inequality together with the first Borel--Cantelli lemma yields
\[
    R_{f,N_j}(\theta)\longrightarrow \int_{\R}f(x)\,dx
\]
for almost all $\theta\in I$. Since $N_{j+1}/N_j\to 1$, one may then pass from the subsequence to all $N$ by a standard interpolation argument (see~\cite[Section~7]{Technau_Yesha} for the details). Thus, it is enough to establish~\eqref{main_goal}.

In this section, we sketch the proof of Theorem~1.1. For simplicity, we omit various $\eps$-powers, logarithmic factors, and smooth weights, and restrict all sums to the most critical dyadic ranges. The full argument is given in Sections~4--7.

By~\eqref{main_criterion}, one needs to show roughly
\begin{equation} \label{to_show_roughly}
    \frac{1}{N^4} \sum_{k_1, k_2 \asymp N}
    \sum_{\substack{m_1, n_1, m_2, n_2 \asymp N \\ n_i - m_j \asymp N \\ m_1 - m_2 \asymp N \\ n_1 - n_2 \asymp N}}
    \int_{\theta_A}^{\theta_B}
    e\Big( \alpha k_1 (n_1^{\theta} - m_1^\theta) - \alpha k_2 (n_2^{\theta} - m_2^\theta) \Big) d\theta \ll N^{-\delta}.
\end{equation}
Thus the whole problem is reduced to bounding oscillatory integrals in $\theta$.

\subsection*{Repulsion principle}

The main technical input in~\cite{Technau_Yesha} is van der Corput's lemma.

\begin{lemma} \label{Corput_lemma}
    Let $f: \R \to \R$ be a smooth function. Suppose that for some fixed $\ell \ge 1$ one has
    $|f^{(\ell)}(x)| \ge \lambda > 0$ on an interval $I$, and if $\ell=1$, assume additionally that $f'$ is monotone on $I$. Then
    \[
        \Big| \int_I e\big( f(x) \big)\,dx \Big| \ll_\ell \lambda^{-1/\ell}.
    \]
\end{lemma}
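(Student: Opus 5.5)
This is van der Corput's lemma, and I would prove it by induction on $\ell$.

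\emph{Base case $\ell=1$.} Write
\[
e(f(x)) = \frac{1}{2\pi i f'(x)}\,\frac{d}{dx}e(f(x)),
\]
which is legitimate because $|f'|\ge\lambda>0$ on $I=[a,b]$. Integrating by parts gives
\[
\int_I e(f(x))\,dx = \Big[\frac{e(f(x))}{2\pi i f'(x)}\Big]_a^b - \int_a^b e(f(x))\,\frac{d}{dx}\Big(\frac{1}{2\pi i f'(x)}\Big)dx.
\]
The boundary terms are bounded by $\frac{1}{\pi\lambda}$. For the integral, bound $|e(f)|\le 1$. Since $f'$ is monotone and does not vanish, $1/f'$ is monotone, so its derivative has constant sign. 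Hence
\[
\int_a^b \Big|\frac{d}{dx}\frac{1}{f'}\Big|\,dx = \Big|\frac{1}{f'(b)}-\frac{1}{f'(a)}\Big| \le \frac{1}{\lambda}.
\]
This gives the bound $\ll \lambda^{-1}$. Monotonicity is used exactly here, to turn the total variation of $1/f'$ into a telescoping difference.

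\emph{Inductive step.} Suppose the claim holds for $\ell$, and let $|f^{(\ell+1)}|\ge\lambda$ on $I$. Then $f^{(\ell)}$ is strictly monotone, so it vanishes at most once. Fix $\delta>0$ and let $J\subset I$ be the set where $|f^{(\ell)}|<\delta$. By the mean value theorem applied to $f^{(\ell)}$, $J$ is an interval of length at most $2\delta/\lambda$, and it contributes trivially at most $2\delta/\lambda$. The complement $I\setminus J$ consists of at most two intervals, and on each of them $|f^{(\ell)}|\ge\delta$. For $\ell\ge2$ the inductive hypothesis applies directly. For $\ell=1$ it also applies, because $f'$ is monotone, since $f''$ has constant sign. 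Each of these intervals therefore contributes $\ll_\ell \delta^{-1/\ell}$.

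The total is $\ll \delta/\lambda + \delta^{-1/\ell}$. Choosing $\delta=\lambda^{\ell/(\ell+1)}$ balances the two terms and gives $\ll_{\ell}\lambda^{-1/(\ell+1)}$, which closes the induction. The implied constant depends only on $\ell$, because it arises from a fixed number of applications of the previous case.

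Nothing here is deep. The only point needing care is checking the monotonicity hypothesis when passing from $\ell=2$ down to the $\ell=1$ case, and that is automatic, as noted above. The proof uses only smoothness, so that $f^{(\ell)}$ exists and is continuous, together with the sign information.
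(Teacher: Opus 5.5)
Your proof is correct. It is the standard argument for van der Corput's lemma: integration by parts with the monotone function $1/f'$ telescoping for $\ell=1$, then removing the short interval where $|f^{(\ell)}|<\delta$ and choosing $\delta=\lambda^{\ell/(\ell+1)}$. You also check monotonicity properly when passing from $\ell=2$ down to $\ell=1$.

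The paper gives no proof of this lemma; it only quotes it as the classical tool behind the Technau--Yesha repulsion principle, so there is no alternative route to compare. Your restriction to compact $I=[a,b]$ is harmless, since your bound is uniform in the endpoints.
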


Denote the phase in~\eqref{to_show_roughly} by
\[
    E (\theta) := \alpha k_1 \big(n_1^\theta-m_1^\theta\big) - \alpha k_2\big(n_2^\theta-m_2^\theta\big).
\]
Its derivatives are
\begin{equation} \label{E_der}
    E^{(\ell)} (\theta)
    = \alpha k_1 \big( n_1^\theta (\log n_1)^{\ell}-m_1^\theta (\log m_1)^{\ell}\big)
    - \alpha k_2 \big(n_2^\theta (\log n_2)^{\ell}-m_2^\theta (\log m_2)^{\ell} \big).
\end{equation}

The repulsion principle of Technau and Yesha is based on the fact that the logarithmic factors appearing in $E^{(\ell)}(\theta)$ for $\ell=1,2,3,4$ form a Vandermonde matrix. This implies that among the first four derivatives of the phase, at least one must be large. Heuristically, in the most critical dyadic range,
\[
    \max_{1\le \ell \le 4} |E^{(\ell)}(\theta)| \gg N^{1+\theta}
\]
for all $\theta \in (\theta_A,\theta_B)$ and for all tuples occurring in~\eqref{to_show_roughly}. After partitioning $(\theta_A,\theta_B)$ into $O(1)$ subintervals if necessary (since there are $O(1)$ sign changes of each derivative), one may therefore apply Lemma~\ref{Corput_lemma} with $\ell\le 4$, and in the worst case with $\ell=4$. This gives
\[
    V_f (N; (\theta_A, \theta_B))
    \ll \frac{1}{N^4}
    \sum_{k_1, k_2 \asymp N}
    \sum_{\substack{m_1, n_1, m_2, n_2 \asymp N \\ n_i - m_j \asymp N \\ m_1 - m_2 \asymp N \\ n_1 - n_2 \asymp N}}
    \big( N^{-1-\theta_A} \big)^{1/4}
    \ll N^{-4+6-1/4-\theta_A/4}.
\]
Thus one gets a power saving as soon as $\theta_A>7$.

\subsection*{New approach: short intervals}

Our approach begins by splitting the integral in~\eqref{to_show_roughly} into many short intervals. Namely,
\[
    V_f (N; (\theta_A, \theta_B))
    = \sum_{i=0}^{H-1} V_f^{(i)} (N),
\]
where
\begin{gather*}
    V_f^{(i)} (N)
    \approx \frac{1}{N^4}
    \sum_{k_1, k_2 \asymp N}
    \sum_{\substack{m_1, n_1, m_2, n_2 \asymp N \\ n_i - m_j \asymp N \\ m_1 - m_2 \asymp N \\ n_1 - n_2 \asymp N}}
    \int_{\theta_i}^{\theta_i + \frac{1}{H}} e\big( E(\theta) \big)\, d\theta, \\
    \theta_i = \theta_A + \frac{i (\theta_B - \theta_A)}{H}.
\end{gather*}
We choose
\[
    H := N^{\frac{1+\theta_B}{2} + \eps}.
\]
Thus it is enough to prove
\[
    V_f^{(i)} (N) \ll H^{-1} N^{-\delta}
\]
for every $i$.

Applying Taylor expansion at $\theta=\theta_i$, we get
\[
    E(\theta)=E(\theta_i)+(\theta-\theta_i)E^{(1)}(\theta_i) +O\Big(\frac{1}{H^2}\max |E^{(2)}(\theta)|\Big),
\] where $E^{(1)} (\theta)$ and $E^{(2)} (\theta)$ are the first and second derivatives of $E(\theta)$ given by~\eqref{E_der}. By our choice of $H$, the quadratic error term is of size $o(1)$ and may therefore be absorbed. Substituting $u=\theta-\theta_i$, we arrive at
\begin{multline} \label{final_shape_sketch}
    V_f^{(i)} (N)
    \ll
    \frac{1}{N^4}
    \sum_{k_1, k_2 \asymp N}
    \sum_{\substack{m_1, n_1, m_2, n_2 \asymp N \\ n_i - m_j \asymp N \\ m_1 - m_2 \asymp N \\ n_1 - n_2 \asymp N}}
    \bigg| \int_0^{1/H} e\big( u E^{(1)}(\theta_i) \big)\,du \bigg| \ll \\
    \frac{1}{N^4}
    \sum_{k_1, k_2 \asymp N}
    \sum_{\substack{m_1, n_1, m_2, n_2 \asymp N \\ n_i - m_j \asymp N \\ m_1 - m_2 \asymp N \\ n_1 - n_2 \asymp N}}
    \min\Big( \frac{1}{H}, \frac{1}{|E^{(1)}(\theta_i)|} \Big).
\end{multline} Note that with any smaller choice of $H$ (longer integrals), one could not ignore the quadratic term $u^2 E^{(2)}(\theta_i)$ and therefore could not always get $|E^{(1)}(\theta_i)|^{-1}$ as the second upper bound.

\subsection*{A counting problem}

We now split the 6-tuples $(k_1, k_2, m_1, m_2, n_1, n_2)$ in~\eqref{final_shape_sketch} into dyadic level sets according to the value of $|E^{(1)}(\theta_i)|$. Let
\[
    J_{\Delta} :=
    \Big\{ k_1, k_2, m_1, n_1, m_2, n_2 \asymp N:
    \ |E^{(1)} (\theta_i)| \asymp \Delta N^{1+\theta_i} \Big\},
\]
where $\Delta$ runs over the values $1,\frac12,\frac14,\dots$. Our goal is then to prove the bound
\begin{equation} \label{desired_bound_sketch}
    \# \Big\{ k_1, k_2, m_1, n_1, m_2, n_2 \asymp N:
    \ |E^{(1)} (\theta_i)| \le \Delta N^{1+\theta_i} \Big\}
    \ll N^{4-\kappa} + \Delta N^6
\end{equation}
with some $\kappa>0$. This would imply the same upper bound for $J_\Delta$, and therefore
\[
    V_f^{(i)} (N) \ll
    \sum_{\substack{\Delta \text{ dyadic} \\ 1 \ge \Delta \gg N^{-2-\kappa}}}
    \frac{1}{N^4}
    \big( N^{4-\kappa} + \Delta N^6 \big)
    \min\Big( \frac{1}{H}, \frac{1}{\Delta N^{1+\theta_i}} \Big) + O(N^{-\kappa} H^{-1}).
\]
A short calculation gives
\[
    V_f^{(i)} (N) \ll
    \frac{N^{-\kappa}\log N}{H} + \frac{\log N}{N^{\theta_i-1}},
\]
which is $\ll H^{-1}N^{-\delta}$ as soon as $\kappa > \delta$ and $\theta_i>3+2\eps+2\delta$, say. Finally, take $\eps,\delta\to 0$. Thus, the problem is reduced to proving~\eqref{desired_bound_sketch}.

Aistleitner, El-Baz, and Munsch~\cite{AEBM} decomposed a general counting problem of this form into what can be called a ``zeta part'' and an ``additive-energy part''. The condition
\[
    |E^{(1)}(\theta_i)|\le \Delta N^{1+\theta}
\]
can be written in the multiplicative form roughly as
\[
    \bigg| \log \frac{k_1 (n_1^{\theta_i} \log n_1 - m_1^{\theta_i} \log m_1)}
    {k_2 (n_2^{\theta_i} \log n_2 - m_2^{\theta_i} \log m_2)} \bigg| \le \Delta.
\] After a standard Fourier decomposition, one roughly gets
\begin{multline} \label{main_expression}
    \# J_{\Delta}
    \approx \Delta \int_0^{1/\Delta} \bigg| \sum_{k_1, k_2 \asymp N} \sum_{\substack{m_1, m_2 \asymp N \\ n_1, n_2 \asymp N \\ n_i > m_i}} e\Big( t \log \frac{k_1 (n_1^{\theta_i} \log n_1 - m_1^{\theta_i} \log m_1)}
    {k_2 (n_2^{\theta_i} \log n_2 - m_2^{\theta_i} \log m_2)} \Big) \bigg| dt \ll \\
    \Delta \int_0^{1/\Delta}
    \Big| \sum_{k \asymp N} k^{it} \Big|^2
    \Big| \sum_{\substack{m, n \asymp N \\ n > m}} e\Big( t \log \big( n^{\theta_i} \log n - m^{\theta_i} \log m \big)  \Big) \Big|^2 dt.
\end{multline} We note that one roughly has
\[
    \Delta \int_0^{1/\Delta}
    \Big| \sum_{\substack{m, n \asymp N \\ n > m}} e\Big( t \log \big( n^{\theta_i} \log n - m^{\theta_i} \log m \big) \Big) \Big|^2 dt \ll \Delta \int_0^{1/\Delta}
    \Big| \sum_{m \asymp N} e\Big( t \frac{m^{\theta_i} \log m}{N^{\theta_i} \log N} \Big) \Big|^4 dt
\] since both expressions can be bounded by similar counting quantities. Therefore, if one had the optimal additive-energy bound
\begin{multline} \label{desired_energy_bound}
    \Delta \int_0^{1/\Delta} \Big| \sum_{m \asymp N} e\Big( t \frac{m^{\theta_i} \log m}{N^{\theta_i} \log N} \Big) \Big|^4 dt \approx
    \# \Big\{ m_1, \ldots, m_4 \asymp N: \\ 
    \big| m_1^{\theta_i} \log m_1 -  m_2^{\theta_i} \log m_2 + m_3^{\theta_i} \log m_3 - m_4^{\theta_i} \log m_4 \big| < \Delta N^{\theta_i} \log N \Big\} \ll
    N^{\eps} \big( N^2 + \Delta N^4 \big), 
\end{multline}
then~\eqref{desired_bound_sketch} would follow immediately from any power saving bound on the zeta sum. An analogous counting problem in~\cite{AEBM, RT} goes without logarithmic factors, and there the same upper bound (as in~\eqref{desired_energy_bound}) follows from well-known results of Robert and Sargos~\cite{Robert-Sargos} and Huang~\cite{Huang}. Although the bound~\eqref{desired_energy_bound} should likely hold in this case as well, the approaches from~\cite{Robert-Sargos, Huang} do not seem to generalize directly. 

Instead, we follow the strategy in~\cite{AEBM}. First, the trivial bound gives
\[
    \Delta \int_0^1 \ldots dt \ll \Delta \int_0^1 N^6 dt \ll \Delta N^6.
\] Second, applying the Kusmin--Landau inequality, we get
\[
    \Delta \int_1^N \ldots dt \ll \Delta \int_1^N \Big( \frac{N}{t}  \Big)^2 \cdot N^4 dt \ll \Delta N^6. 
\] 

We split the remaining part of the integral in~\eqref{main_expression} into dyadic intervals $t\in[T,2T]$. 
Assume that $N < T \le N^{2+\eps}$.
Applying Hölder's inequality, we obtain
\begin{multline} \label{after_holder_sketch}
    \# J_{\Delta} \ll \Delta
    \bigg( \int_T^{2T} \Big| \sum_{k \asymp N} k^{it} \Big|^8 dt \bigg)^{1/4}
    \cdot \\
    \bigg(
    \max_{t \in [T, 2T]}
    \Big| \sum_{m \asymp N} \sum_{n \asymp N} e\Big( t \log (n^{\theta_i} \log n - m^{\theta_i} \log m) \Big) \Big|^{2/3}
    \bigg)^{3/4} \cdot 
    \bigg( \int_T^{2T} \Big| \sum_{m \asymp N} e\Big( t \frac{m^{\theta_i} \log m}{N^{\theta_i} \log N} \Big) \Big|^4 dt \bigg)^{3/4}.
\end{multline}
The first factor is controlled by the eighth moment of the zeta function, and can be estimated by
\[
    \Delta \big( N^4 T^{\frac{3}{2}} \big)^{\frac{1}{4}}
\] by work of Ingham, Heath-Brown, and Ivi{\'c}. The second factor is estimated by
\[
    \big( N \cdot N^{\frac{1}{2}} T^{\frac{13}{84}} \big)^{\frac{2}{3} \cdot \frac{3}{4}}
\] using Bourgain's exponent pair~\cite{Bourgain16}. The third factor plays the role of an additive-energy term; here we mimic part of the argument of Robert and Sargos~\cite{Robert-Sargos}, but stop at a scale where the available exponent-pair bounds already give sufficient cancellation: the bound we obtain for the third factor in~\eqref{after_holder_sketch} is
\[
    N^{\eps} \big( T N^{2.45} + N^4\big)^{\frac{3}{4}}.
\] Combining the last three estimates yields
\[
    \#J_\Delta \ll \Delta N^{5.993} \le \Delta N^6.
\] Finally, if $T > N^{2+\eps}$, we have $\Delta^{-1} > N^{2+\eps}$, and so by monotonicity of $\# J_{\Delta}$, we get
\[
    \# J_{\Delta} \ll \# J_{N^{-2-\eps}} \ll N^{5.993 - 2 - \eps} \ll N^{4-\kappa}.
\] Together, these bounds give~\eqref{desired_bound_sketch}, as desired.

We stress again that the threshold $\theta>3$ does not come from the quality of the bounds used in~\eqref{after_holder_sketch}; it already appears at the previous step from the choice of $H$, which is essentially dictated by our aim of applying Taylor expansion to $E(\theta)$.

\subsection*{Small $\theta$ regime}

We now consider the range $0<\theta<1$. Applying Poisson summation in $m,n,k$ to the sum in~\eqref{main_criterion}, as in~\cite{LST, Shubin_Radziwill}, leads heuristically to
\[
    S(N) := \frac{1}{N^2} \sum_{k \asymp N}
    \sum_{\substack{m,n \asymp N \\ n-m \asymp N}}
    e\big( \alpha k (m^{\theta} - n^{\theta}) \big)
    \approx N^{-\frac{1}{2} - \frac{3\theta}{2}}
    \sum_{\ell \asymp N^{\theta}}
    \sum_{m,n \asymp N^{\theta}}
    e\big( \ell^{\frac{1}{\theta}} \eta^{1-\frac{1}{\theta}} \big),
\]
where
\[
    \eta(\theta) := m^{-\frac{\theta}{1-\theta}} - n^{-\frac{\theta}{1-\theta}}.
\]
Thus PPC follows from $S(N)=o(1)$, and one is led to estimating a dual variance, which is of the form
\[
    V_f (N; (\theta_A, \theta_B)) :=
    \int_{\theta_A}^{\theta_B}
    \frac{1}{N^{1+3\theta}}
    \sum_{\ell_1, \ell_2 \asymp N^{\theta}}
    \sum_{\eta_1, \eta_2}
    e\big( \ell_1^{\frac{1}{\theta}} \eta_1^{1-\frac{1}{\theta}} - \ell_2^{\frac{1}{\theta}} \eta_2^{1-\frac{1}{\theta}} \big)
    d\theta.
\]

We again split the $\theta$-integral into short pieces of length $H^{-1}$ with
\[
    H:=N^{\frac{1+\theta_B}{2} + \eps},
\]
and repeat the same argument as before. This yields
\[
    V_f^{(i)} (N) \ll
    \frac{1}{N^{1+3\theta_i}}
    \sum_{\ell_1,\ell_2 \asymp N^{\theta_i}}
    \sum_{\eta_1,\eta_2}
    \min\Big( \frac{1}{H}, \frac{1}{|E^{(1)}(\theta_i)|} \Big),
\]
where now
\[
    E^{(1)}(\theta) =   F(\ell_1,\eta_1,\theta)\,\ell_1^{\frac{1}{\theta}}\eta_1^{1-\frac{1}{\theta}} -
    F(\ell_2,\eta_2,\theta)\,\ell_2^{\frac{1}{\theta}}\eta_2^{1-\frac{1}{\theta}}
\] for a smooth function \(F\) of size roughly \(\log N\).

Thus everything reduces again to a counting problem. In this setting, one expects the bound
\begin{equation} \label{strong_counting}
    \#J_\Delta = \# \Big\{ k_1, k_2, m_1, n_1, m_2, n_2 \asymp N^{\theta_i}: \ |E^{(1)}(\theta_i)| \asymp \Delta N^{1+\theta_i}  \Big\} \ll N^{4\theta_i - \eps} + \Delta N^{6\theta_i}.
\end{equation} We will nevertheless obtain only a weaker bound, roughly
\begin{equation} \label{weak_counting}
    \ll N^{(4+\frac{2}{3}) \theta_i} + \Delta N^{6\theta_i},
\end{equation} which is the necessary minimum for our goal.
Substituting this into the analogue of~\eqref{final_shape_sketch}, we obtain
\[
    V_f^{(i)} (N) \ll
    \sum_{\substack{\Delta \text{ dyadic} \\ 1 \ge \Delta \gg N^{-2\theta_i-\eps}}}
    \frac{1}{N^{1+3\theta_i}}
    \Big( N^{(4+2/3)\theta_i} + \Delta N^{6\theta_i} \Big)
    \min \Big( \frac{1}{H}, \frac{1}{\Delta N^{1+\theta_i}} \Big),
\]
and hence
\[
    V_f^{(i)} (N) \ll
    \frac{N^{(5/3)\theta_i-1}\log N}{H} +
    \frac{\log N}{N^{2-2\theta_i}}.
\]
This is $\ll H^{-1}N^{-\delta}$ provided \(\theta_i \le \theta_B < \frac35 - \delta - \eps\). Finally, again, take $\delta, \eps \to 0$. 

Note that the threshold $\frac{3}{5}$ comes from both terms in the counting bound, and since the term $\Delta N^{6\theta_i}$ cannot be avoided, it again cannot be surpassed at this step by simply improving the first term in~\eqref{weak_counting} or~\eqref{strong_counting}.

\section{Auxiliary lemmas} \label{sec_aux_lemmas}

Here we provide the necessary technical statements used in the proof.

\begin{lemma} \label{Kusmin_Landau}
    Let $f:[a,b]\to \R$ be a $C^1$ function such that $f'$ is monotone on $[a,b]$. Assume moreover that
    \[
        \|f'(x)\| \ge \lambda >0
        \qquad \text{for all } x\in [a,b].
    \]
    Then
    \[
        \sum_{a<n\le b} e\big(f(n)\big) \ll \lambda^{-1}.
    \]
\end{lemma}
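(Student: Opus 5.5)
The plan is to prove the Kusmin--Landau inequality by Abel summation, after reducing to the case where $f'$ stays inside a single interval $[k+\lambda, k+1-\lambda]$ for some integer $k$.

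\emph{Reduction.} Since $f'$ is monotone and continuous, and $\|f'(x)\|\ge\lambda$ forbids $f'$ from crossing any integer, $f'([a,b])$ lies in one interval $[k+\lambda,k+1-\lambda]$. Replacing $f(x)$ by $f(x)-kx$ leaves every $e(f(n))$ unchanged. Replacing $f$ by $-f$ if necessary (this only conjugates the sum), we may assume $\lambda\le f'(x)\le 1-\lambda$ and that $f'$ is monotone. Write $g(n)=f(n+1)-f(n)$. By the mean value theorem $g(n)=f'(\xi_n)\in[\lambda,1-\lambda]$, and $g$ is monotone in $n$ because $f'$ is monotone.

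\emph{Abel summation.} The key identity is
\[
e(f(n)) = \frac{e(f(n+1))-e(f(n))}{e(g(n))-1}.
\]
Set $c_n := 1/(e(g(n))-1)$. Let $M=\lfloor a\rfloor+1$ and $M'=\lfloor b\rfloor$. Summing by parts gives
\[
\sum_{M\le n< M'} e(f(n)) = c_{M'-1}e(f(M')) - c_M e(f(M)) - \sum_{M<n<M'} e(f(n))\,(c_n-c_{n-1}),
\]
so the whole sum is bounded by $|c_M|+|c_{M'-1}|+1+\sum_n|c_n-c_{n-1}|$. The extra $1$ accounts for the last term $n=M'$.

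\emph{Bounding the coefficients.} We have
\[
c_n = -\tfrac12 - \tfrac{i}{2}\cot(\pi g(n)).
\]
Hence $|c_n|\le \tfrac12+\tfrac12|\cot(\pi g(n))|\ll \lambda^{-1}$, using $g(n)\in[\lambda,1-\lambda]$. The function $\cot(\pi x)$ is monotone on $(0,1)$ and $g(n)$ is monotone in $n$, so $\cot(\pi g(n))$ is monotone in $n$. The differences therefore telescope:
\[
\sum_n|c_n-c_{n-1}| = \tfrac12\bigl|\cot(\pi g(M'-1))-\cot(\pi g(M))\bigr| \ll \lambda^{-1}.
\]
Combining the three bounds gives $\ll 1+\lambda^{-1}\ll\lambda^{-1}$, since $\lambda\le 1/2$.

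The only point that needs care is the reduction step. It requires checking that monotonicity of $f'$ is inherited by the discrete differences $g(n)$, which holds because $g(n)=\int_0^1 f'(n+s)\,ds$. It also requires that continuity of $f'$ keeps its whole range inside a single gap between consecutive integers.
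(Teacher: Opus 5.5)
Your proof is correct. The paper gives no proof of this lemma and simply cites \cite[Corollary~8.11]{IK}; your argument is the classical Kusmin--Landau proof, essentially the one behind that citation: reduce to $\lambda\le f'\le 1-\lambda$, sum by parts using $1/(e(g(n))-1)=-\tfrac12-\tfrac{i}{2}\cot(\pi g(n))$, and telescope the differences via monotonicity of $\cot(\pi g(n))$. The only nit is that your summation-by-parts identity needs at least two integers in $(a,b]$, since otherwise $c_{M'-1}$ involves a point outside $[a,b]$; when there is at most one integer the sum is trivially bounded by $1\ll\lambda^{-1}$.
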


See, for example,~\cite[Corollary~8.11]{IK}.

\begin{lemma} \label{Corput}
    Let $f:[a,b]\to \R$ be a $C^2$ function such that
    \[
        |f''(x)| \asymp \lambda_2 >0
        \qquad \text{for all } x\in [a,b].
    \]
    Then
    \[
        \sum_{a<n\le b} e\big(f(n)\big)
        \ll
        (b-a)\lambda_2^{\frac{1}{2}}+\lambda_2^{-\frac{1}{2}}.
    \]
\end{lemma}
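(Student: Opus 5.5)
This is the classical van der Corput second-derivative estimate, and I would prove it by reducing to the Kusmin--Landau inequality (Lemma~\ref{Kusmin_Landau}). By conjugating if necessary (replacing $f$ by $-f$), I may assume $f''\asymp \lambda_2>0$ on $[a,b]$. Then $f'$ is strictly increasing and continuous, and its total variation over $[a,b]$ is
\[
    f'(b)-f'(a)\asymp (b-a)\lambda_2 .
\]
If $\lambda_2\ge 1$, the bound is trivial: the sum has at most $(b-a)+1$ terms, and this is $\ll (b-a)\lambda_2^{1/2}+\lambda_2^{-1/2}$ up to the extra $1$. I would handle that extra $1$ by noting that when $b-a<1$ the sum has at most one term, which is absorbed by $\lambda_2^{-1/2}\cdot$ const only if $\lambda_2\ll1$. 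Otherwise the $(b-a)\lambda_2^{1/2}$ term dominates. So the interesting case is $\lambda_2<1$.

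Fix a parameter $0<\delta\le 1/4$ to be chosen. Split $[a,b]$ according to the position of $f'(x)$ relative to the integers. For each integer $r$ with $[r-\delta,r+\delta]\cap f'([a,b])\neq\emptyset$, let
\[
    I_r:=\{x\in[a,b]: |f'(x)-r|\le\delta\}.
\]
Since $f'$ is monotone, $I_r$ is an interval. By the mean value theorem and $f''\gg\lambda_2$, its length is $\ll \delta/\lambda_2$. On each $I_r$ I bound the sum trivially by $|I_r|+1\ll \delta/\lambda_2+1$.

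The complement of $\bigcup_r I_r$ in $[a,b]$ is a union of intervals $J_r$, on each of which $\|f'(x)\|\ge\delta$ and $f'$ is monotone. Lemma~\ref{Kusmin_Landau} then gives a contribution $\ll \delta^{-1}$ per interval. The number of relevant integers $r$, and hence of intervals $I_r$ and $J_r$, is $\ll (b-a)\lambda_2+1$, because $f'$ moves through a range of length $\asymp (b-a)\lambda_2$.

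Summing both kinds of pieces,
\[
    \sum_{a<n\le b} e(f(n))\ll \big((b-a)\lambda_2+1\big)\Big(\frac{\delta}{\lambda_2}+1+\frac1\delta\Big).
\]
I choose $\delta=\lambda_2^{1/2}$, which is at most $1/4$ after adjusting constants in the case $\lambda_2<1$ (if $\lambda_2^{1/2}>1/4$, the trivial bound already suffices). With this choice the second factor is $\ll \lambda_2^{-1/2}$, giving
\[
    \ll (b-a)\lambda_2^{1/2}+\lambda_2^{-1/2}.
\]

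The only delicate point is the bookkeeping: the intervals must be pieced correctly so that each $J_r$ genuinely satisfies the hypothesis $\|f'\|\ge\delta$ of Lemma~\ref{Kusmin_Landau}, and the count of pieces must be $O((b-a)\lambda_2+1)$. Both follow from the monotonicity and continuity of $f'$, so I expect no real obstacle. This is the standard argument, as in~\cite[Corollary~8.13]{IK}, and in the paper one could simply cite that reference.
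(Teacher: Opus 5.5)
Your main argument is the standard proof of the second-derivative test. The paper gives no proof of its own and simply cites \cite[Corollary~8.13]{IK}. For $\lambda_2\le 1/16$ your steps are all correct: the splitting according to whether $\|f'\|$ is below or above $\delta$, Kusmin--Landau on the $O((b-a)\lambda_2+1)$ good pieces, the trivial bound on the $O((b-a)\lambda_2+1)$ bad pieces of length $\ll\delta/\lambda_2$, and the choice $\delta=\lambda_2^{1/2}$. For $1/16<\lambda_2<1$ the trivial bound $(b-a)+1$ is absorbed because $1<\lambda_2^{-1/2}$. Note that throughout the range $\lambda_2<1$ you never need a lower bound on $b-a$.

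The one step that is wrong is the case $\lambda_2\ge 1$ with $b-a<1$. Your sentence ``otherwise the $(b-a)\lambda_2^{1/2}$ term dominates'' is false. No argument can rescue it, because the lemma as stated fails there. Take $f(x)=\tfrac12\lambda x^2$ on $[1-\lambda^{-1},1]$. Then $f''=\lambda$, and the sum is the single term $e(\lambda/2)$ of modulus $1$, while the right-hand side equals $2\lambda^{-1/2}\to 0$ as $\lambda\to\infty$.

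The classical formulation (e.g.\ Titchmarsh's Theorem~5.9) includes the hypothesis $b-a\ge 1$. Under that hypothesis your trivial bound $(b-a)+1\le 2(b-a)\lambda_2^{1/2}$ closes the case $\lambda_2\ge1$. Alternatively, you can add $+1$ to the right-hand side, which makes the statement true with no restriction on $b-a$.

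Either corrected version suffices for the paper. In Propositions~\ref{count_small_theta_diag} and~\ref{count_small_theta_nondiag} the lemma is only used to obtain the bound $T^{1/2}+MT^{-1/2}$ with $T\ge1$, and $T^{1/2}\ge 1$ already dominates the extra term. So you should state the missing hypothesis explicitly rather than claim the degenerate case is covered.
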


See~\cite[Corollary~8.13]{IK}.

\begin{lemma} \label{MVT_zeta}
    Let $T \ge N \ge 2$ and let $A \ge 2$. We have the bound
    \[
        \int_T^{2T} \Big| \sum_{N < n \le 2N} n^{it} \Big|^{2A} dt \ll_{A,\varepsilon} 
        \begin{cases}
        N^A T^{\frac12+\frac A4+\varepsilon}, & 2\le A<6,\\[1mm]
        N^A T^{\frac{3A+4}{11}+\varepsilon}, & 6\le A<\frac{89}{13},\\[1mm]
        N^A T^{\frac{35A+3}{108}+\varepsilon}, & \frac{89}{13}\le A<7,\\[1mm]
        N^A T^{\frac{9A+1}{28}+\varepsilon}, & A\ge 7.
    \end{cases}
    \]
    for any $\eps > 0$.
\end{lemma}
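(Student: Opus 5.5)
The plan is to reduce the moment of the Dirichlet polynomial to moments of $\zeta(\frac12+it)$ on a range of length $\asymp T$, and then insert the known bounds for the latter. For $t\in[T,2T]$, truncated Perron's formula gives
\[
    \sum_{N<n\le 2N} n^{-it}
    = \frac{1}{2\pi i}\int_{c-iU}^{c+iU} \zeta(s+it)\,\frac{(2N)^s-N^s}{s}\,ds + O\Big(\frac{N^{1+\varepsilon}}{U}\Big),
\]
with $c=1+1/\log N$ and $U=T/4$, say. (The sum $\sum n^{it}$ is the complex conjugate, so it suffices to treat $\sum n^{-it}$.)

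I then shift the contour to $\operatorname{Re} s=\frac12$. The horizontal segments at height $\pm U$ contribute $\ll N^{1+\varepsilon}/T + N^{1/2}T^{-1/2+\varepsilon}$, using the convexity bound for $\zeta$. Since $N\le T$, this is $\ll N^{1/2}T^{\varepsilon}$. The pole of $\zeta(s+it)$ at $s=1-it$ lies far outside the truncated range because $|t|\ge T>U$, so it is not crossed. This yields the pointwise bound
\[
    \Big|\sum_{N<n\le 2N} n^{it}\Big|
    \ll N^{1/2}\int_{-U}^{U} \frac{|\zeta(\frac12+i(t+u))|}{1+|u|}\,du + N^{1/2}T^{\varepsilon}.
\]

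Next I apply Hölder's inequality with respect to the measure $du/(1+|u|)$, whose total mass is $\ll \log T$. This gives
\[
    \Big|\sum n^{it}\Big|^{2A}\ll N^{A}(\log T)^{2A-1}\int_{-U}^{U}\frac{|\zeta(\frac12+i(t+u))|^{2A}}{1+|u|}\,du + N^A T^{\varepsilon}.
\]
I integrate over $t\in[T,2T]$ and swap the order of integration. Since $t+u\in[T/2,3T]$, the whole moment is bounded by
\[
    N^A T^{\varepsilon}\Big(T+\int_{0}^{3T}|\zeta(\tfrac12+iv)|^{2A}\,dv\Big).
\]
It therefore remains to show $\int_0^{T}|\zeta(\frac12+iv)|^{2A}\,dv\ll T^{M(A)+\varepsilon}$, where $M(A)$ is the exponent stated in the lemma.

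For $2\le A\le 6$, I use Ingham's fourth moment bound $T^{1+\varepsilon}$ and Heath-Brown's twelfth moment bound $T^{2+\varepsilon}$. Log-convexity of moments (Hölder interpolation between exponents $4$ and $12$) gives the linear exponent $\frac12+\frac A4$, which equals $1$ at $A=2$ and $2$ at $A=6$. For $A\ge 6$, I invoke Ivić's general theorem on large power moments (Ivić, \emph{The Riemann zeta-function}, Thm.~8.3). That theorem combines large-value estimates with the twelfth moment and a bound $\zeta(\frac12+it)\ll t^{\mu+\varepsilon}$. Inserting Bourgain's exponent $\mu=\frac{13}{84}$ should reproduce the three remaining branches $\frac{3A+4}{11}$, $\frac{35A+3}{108}$ and $\frac{9A+1}{28}$. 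I would check that the breakpoints $6$, $\frac{89}{13}$ and $7$ match those of Ivić's piecewise formula; at $A=7$ the formula gives $\frac{64}{28}$, which is better than the naive bound $2+2\mu$, so the large-values refinement is genuinely needed there.

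I expect the main obstacle to be bookkeeping rather than new ideas. One issue is making sure the truncation height $U\asymp T$ and the hypothesis $T\ge N$ make every error term at most $N^AT^{\varepsilon}$, which is dominated by the main terms. The other is correctly quoting Ivić's piecewise exponents with $\mu=\frac{13}{84}$, since the $A\ge6$ ranges rest entirely on that external input.
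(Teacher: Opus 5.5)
Your argument is essentially the paper's: Perron's formula, a shift to $\operatorname{Re}s=\frac12$, H\"older against $du/(1+|u|)$, Fubini, and then quoting moment bounds for $\zeta(\frac12+it)$ on a range of length $\asymp T$. The only difference is cosmetic. You truncate at $U=T/4$ so the pole of $\zeta(s+it)$ is never crossed, using $T\ge N$ to keep the Perron error $O(N\log N/U+1)$ and the horizontal segments small. The paper instead takes $U\asymp T+N\log N$ and picks up the residue, which is $\ll N/T$.

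One correction to your last step concerns the final two branches. Note that $\frac{35A+3}{108}=1+\frac{35}{216}(2A-6)$ and $\frac{9A+1}{28}=1+\frac{9}{56}(2A-6)$. So these branches are large-values bounds of the shape $1+\mu(2A-6)$ fed with the older exponents $\mu(\frac12)\le\frac{35}{216}$ and $\mu(\frac12)\le\frac{9}{56}$, not with Bourgain's $\frac{13}{84}$. The check you propose (recovering these formulas and the breakpoint $A=7$ from $\mu=\frac{13}{84}$) will therefore not come out. Since the lemma is only an upper bound, what you actually need is that the moment exponent you quote is no larger than the stated one on each range. The paper handles this by simply citing a table of admissible exponents from the literature.
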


\begin{proof}
Let
\[
    P_N(t):=\sum_{N<n\le 2N} n^{it}, \qquad
    J_{2A} (N,T):=\int_T^{2T} |P_N(t)|^{2A}\,dt, \qquad
    I_{2A}(Y):=\int_0^Y \bigl|\zeta(\tfrac12+iu)\bigr|^{2A}\,du.
\] We will show that for any $\varepsilon>0$ one has
\[
    J_{2A}(N,T) \ll_{A,\varepsilon} T\log N + N^{2A} + N^A (TN)^{\eps} I_{2A}\!\big(C(T+N\log N)\big),
\] where $C>0$ is an absolute constant. Write
\[
    A(x,t):=\sum_{n\le x} n^{it}, \qquad P_N (t) = A(2N,t)-A(N,t).
\] By Perron formula,
\[
    A(x,t) = \frac{1}{2\pi i}\int_{\kappa-iU}^{\kappa+iU}
    \zeta(w-it)\frac{x^w}{w}\,dw + O\!\Big(\frac{x\log x}{U}+\log U + 1\Big).
\] We choose
\[
    \kappa:=1+\frac1{\log N}, \qquad U:=2\big(T+N\log N\big).
\] Thus
\[
    P_N (t) = \frac{1}{2\pi i}\int_{\kappa-iU}^{\kappa+iU}
    \zeta(w-it)\frac{(2N)^w - N^w}{w}\,dw +O\big(\log N\big).
\] 

We now shift the contour to $\Re w=\frac{1}{2}$. Since $T \le t \le 2T$ and $2T < U$, we pick up the pole at $w=1+it$: 
\[
    P_N (t) = \frac{(2N)^{1+it} - N^{1+it}}{1+it} + \frac{1}{2\pi i}\int_{1/2-iU}^{1/2+iU} \zeta(w-it)\frac{(2N)^w - N^w}{w}\,dw + H^{\pm}(N,t) + O(\log N),
\] where $H^\pm(N,t)$ denote the contribution of horizontal integrals along $\Im w = \pm U$. Next, we will show that $H^\pm(N,t) \ll 1$ uniformly for $T \le t \le 2T$. Indeed, on the horizontal sides we have $|\,\Im(w-it)\,|\asymp U$. For $\frac12\le \sigma\le 1$, the standard bound (see, e.g.,~\cite[Theorem~II.3.8]{Tenenbaum08}) implies
\[
    \zeta(\sigma+it)\ll_\varepsilon |t|^{\frac{1-\sigma}{3}+\varepsilon}.
\] Hence
\[
    \int_{1/2}^{1} \Big|\zeta(\sigma\pm iU-it)\frac{(2N)^\sigma - N^{\sigma}} {\sigma\pm iU}\Big|\,d\sigma \ll_\varepsilon
    \int_{1/2}^{1} N^\sigma U^{-1+\frac{1-\sigma}{3}+\varepsilon}\,d\sigma \ll 1,
\] since $U \gg N\log N$. On the short segment
$1 \le \sigma \le \kappa$, we use the classical bound
$\zeta(\sigma+it) \ll \log(|t|)$, obtaining
\[
    \int_1^\kappa \Big|\zeta(\sigma\pm iU-it)\frac{(2N)^{\sigma} - N^\sigma}{\sigma\pm iU}\Big|\,d\sigma \ll \int_1^\kappa N^\sigma U^{-1}\log U\,d\sigma \ll \frac{N\log U}{U}\ll 1.
\]

Writing $w=\frac12+iv$, we get
\[
    P_N (t) = \frac{(2N)^{1+it} - N^{1+it}}{1+it} + \frac{1}{2\pi}\int_{-U}^{U} \zeta\! \big(\tfrac12+i(v-t) \big) \frac{(2N)^{1/2+iv} - N^{1/2+iv}}{1/2+iv}\,dv + O(\log N).
\] Hence
\[
    P_N (t) \ll \frac{N}{1+t} + N^{\frac{1}{2}}\int_{-U}^{U} \frac{\big|\zeta(\tfrac12+i(v-t))\big|}{1+|v|}\,dv + \log N.
\]

Set
\[
    K_U(v):=\frac{\mathbbm{1}_{[-U,U]}(v)}{1+|v|}, \qquad f(v) := \big|\zeta(\tfrac12+i(v-t))\big|.
\] Then
\[
    |P_N(t)| \ll \frac{N}{1+t} + N^{\frac{1}{2}}\int_{\mathbb R} K_U(v)\, f(v)\,dv + \log N.
\]

By H\"older's inequality,
\[
    \Big(\int_{\mathbb R} K_U(v)f(v)\,dv\Big)^{2A} \le \Big(\int_{\mathbb R} K_U(v)\,dv\Big)^{2A-1} \int_{\mathbb R} K_U(v)f(v)^{2A}\,dv.
\] Since
\[
    \int_{\mathbb R} K_U(v)\,dv = \int_{-U}^{U}\frac{dv}{1+|v|} \ll \log U,
\] we deduce
\[
    |P_N(t)|^{2A} \ll_A \Big(\frac{N}{1+t}\Big)^{2A} + N^A(\log U)^{2A-1} \int_{-U}^{U} \frac{\bigl|\zeta(\tfrac12+i(v-t))\bigr|^{2A}}{1+|v|}\,dv + (\log N)^{2A}.
\] Integrating over $T \le t \le 2T$, and applying Fubini theorem, we get
\begin{multline*}
    J_{2A}(N,T) \ll_A \\
    \int_T^{2T} \Big(\frac{N}{1+t}\Big)^{2A}dt + N^A(\log U)^{2A-1} \int_{-U}^{U}\frac{1}{1+|v|} \Big(\int_T^{2T} \bigl|\zeta(\tfrac12+i(v-t))\bigr|^{2A}dt\Big)dv + T (\log N)^{2A}.
\end{multline*}

The first integral is $\ll_A N^{2A} T^{1-2A}$. For the inner integral in the second term, put $u:=v-t$. Then
\[
    \int_T^{2T} \bigl|\zeta(\tfrac12+i(v-t))\bigr|^{2A}dt = \int_{v-2T}^{v-T}\bigl|\zeta(\tfrac12+iu)\bigr|^{2A}du.
\]
Since $|v|\le U$ and $T \le t \le 2T < U$, the interval $[v-2T,v-T]$ is contained in $[-2U,2U]$, and therefore, using
\[
    \big|\zeta(\tfrac12-iu)\big|=\big|\zeta(\tfrac12+iu)\big|,
\] we obtain
\[
    \int_T^{2T} \bigl|\zeta(\tfrac12+i(v-t))\bigr|^{2A}dt \le 2I_{2A}(2U).
\] 

Hence
\[
    J_{2A}(N,T) \ll_A N^{2A} T^{1-2A} + N^A(\log U)^{2A} I_{2A}(2U) + T (\log N)^{2A}.
\]

Finally, since $U\asymp T+N\log N$, we conclude that
\[
    J_{2A}(N,T) \ll_{A,\varepsilon} T(\log N)^{2A} + N^{2A} T^{1-2A} + N^A (TN)^{\eps}I_{2A}\!\big(C(T+N\log N)\big)
\] for some absolute constant $C>0$. \\

Next, assume that
\[
    I_{2A}(U)\ll_\varepsilon U^{M(2A)+\varepsilon},
\] where $M(\cdot)$ is an admissible exponent in the bound for the moment of $\zeta(\tfrac12+it)$. Then we have
\[
    J_{2A}(N,T) \ll_{A,\varepsilon}
    T (\log N)^{2A} + N^{2A} T^{1-2A}+N^A (T+N)^{M(2A)+\varepsilon}.
\] 

From known results on the moments of the zeta function (see~\cite[Section~2.1]{TY24}) we have
\[
    M(B)\le
    \begin{cases}
        1+\frac{B-4}{8}, & 4\le B<12,\\[1mm]
        2+\frac{3(B-12)}{22}, & 12\le B<\frac{178}{13},\\[1mm]
        1+\frac{35(B-6)}{216}, & \frac{178}{13} \le B < 14,\\[1mm]
        1+\frac{9(B-6)}{56}, & B \ge 14.
\end{cases}
\]

Substituting $B=2A$ gives the desired bound.

\end{proof}

We remark that the lemma above clearly extends from $N < n \le 2N$ to any interval of the form $c_1 N < n \le c_2 N$.

\begin{lemma} \label{exp_pairs}
    Let $\theta>0$ be fixed. For $N\ge 2$, $T>cN$ for some $c > 0$, and $1\le r<N$, define
    \[
        \phi_1(x):=\frac{x^\theta \log x}{N^\theta \log N}, \qquad
        \phi_2(x):=\log\! \big(x^\theta \log x-(x-r)^\theta \log(x-r) \big),
        \qquad x \asymp N, \ \ r < 0.99x.
    \]
    Then for any $\eps>0$ one has
    \[
        \sum_{N<n\le 2N} e\big(T\phi_j(n)\big)
        \ll_{\theta,\eps}
        N^{\eps}
        \begin{cases}
            N^{\frac{187}{641}} T^{\frac{178}{641}},
            & \\[1mm]
            N^{\frac{1}{2}} T^{\frac{13}{84}},
            & 
        \end{cases}
    \]
    uniformly for $j=1,2$, and in the case $j=2$ uniformly in $r$.
\end{lemma}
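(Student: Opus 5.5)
These are exponent-pair bounds. The plan is to put each phase $F_j(x):=T\phi_j(x)$ into the standard shape required by the theory of exponent pairs, and then to quote the relevant pairs. The two lines of the lemma come from two pairs: Bourgain's exponent pair $(\frac{13}{84}+\eps,\frac{55}{84}+\eps)$ from~\cite{Bourgain16}, and a pair from the recent literature yielding the exponents $(\frac{187}{641},\frac{178}{641})$. Here $(\kappa,\lambda)$ is an exponent pair for a phase $F$ on $[N,2N]$ if $\sum e(F(n))\ll F^{\kappa}N^{\lambda}$ whenever $|F^{(\ell)}(x)|\asymp F N^{-\ell}$ for all $\ell\ge1$ up to a sufficiently large order $L$, with constants allowed to depend on $\ell$. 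Such a pair gives $\sum_{n\asymp N} e(F(n))\ll (T)^{\kappa}N^{\lambda-\kappa}\cdot N^{\kappa}\ldots$; concretely, with amplitude $F=T$ one gets $T^{\kappa}N^{\lambda}$ (for a phase of size $T$ with derivatives $T N^{-\ell}$). For Bourgain's pair this is $T^{13/84}N^{55/84}$, which is not $N^{1/2}T^{13/84}$. So I would instead use the normalization in which the phase is $T\log x$-like with derivatives $\asymp T N^{-\ell}$. There the relevant object is the zeta-type sum $\sum n^{iT}$, and Bourgain's result, applied via the $B$-process or in his formulation $\sum_{n\asymp N} n^{it}\ll N^{1/2}t^{13/84+\eps}$ for general phases of this shape, gives exactly $N^{1/2}T^{13/84}$. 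Similarly, the second bound should match the form of a pair $(\kappa,\lambda)$ with $\lambda-\kappa\cdot0$ arranged so that $T^{\kappa}N^{\lambda-\kappa}=T^{178/641}N^{187/641}$, i.e.\ $\kappa=\frac{178}{641}$ and $\lambda=\frac{365}{641}$. I would quote that pair, or its $A$-process image, from the recent work of Trudgian--Yang cited as~\cite{TY24}.

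The key verification is the derivative condition. For $\phi_1$ we have $T\phi_1(x)=T\,(x/N)^\theta\log x/\log N$. Its $\ell$-th derivative is $T N^{-\ell}\cdot g_\ell(x/N)$, up to factors $1+O(1/\log N)$, with $g_\ell(u)=\theta(\theta-1)\cdots(\theta-\ell+1)u^{\theta-\ell}$. This is nonvanishing for non-integer $\theta$. For integer $\theta$ the vanishing of high derivatives is compensated by the $\log x$ factor, whose contribution is of size $TN^{-\ell}/\log N$. One may absorb this into $N^\eps$, or more cleanly treat it by rescaling $T\to T/\log N$, which only helps. For $\phi_2$, write $x^\theta\log x-(x-r)^\theta\log(x-r)=\int_{x-r}^x (\theta y^{\theta-1}\log y+y^{\theta-1})\,dy$. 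Because $r<0.99x$, the integrand is positive and comparable on the whole range, so the function is $\asymp r N^{\theta-1}\log N$. Its logarithmic derivatives therefore satisfy $\frac{d^\ell}{dx^\ell}\phi_2\asymp_\ell N^{-\ell}$ with signs fixed, uniformly in $r$. I would verify this by expressing $\phi_2(Nu)$ as $\log(rN^{\theta-1})+\log\int_{u-r/N}^{u}h(v)\,dv+O(1/\log N)$ and noting that $\log$ of an average of a smooth positive function over $[u-\rho,u]$, $\rho\in(0,0.99u]$, has $u$-derivatives bounded uniformly in $\rho$.

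The main obstacle is exactly this uniformity. Exponent pairs require lower bounds $|F^{(\ell)}|\gg TN^{-\ell}$, not just upper bounds, and for $\phi_2$ with $\rho=r/N$ varying the relevant derivative of $\log$ of the average could degenerate or change sign. To handle this, I would use the standard fact that the exponent-pair machinery (iterated $A$/$B$ processes, as in Graham--Kolesnik) only needs the lower bound on the second derivative and upper bounds on the others. Then I would check directly that $\phi_2''\asymp -N^{-2}$, which follows from the concavity of $\log$ of the positive increasing-rate function above, including when $\theta\in(0,1)$. If some sign condition fails on a subrange, I would split $[N,2N]$ into $O(1)$ pieces, as is allowed. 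Finally, the $N^\eps$ absorbs the $\log N$ factors and the $\eps$ in the pairs.
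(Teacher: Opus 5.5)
Your route matches the paper's. Its proof consists only of invoking Huxley's exponent pair $(\frac{178}{641},\frac{365}{641})$, taken from Table~17.1 of Huxley's 1996 monograph rather than from Trudgian--Yang, and Bourgain's pair $(\frac{13}{84}+\eps,\frac{55}{84}+\eps)$, applied to the phase $T\phi_j$, whose $\ell$-th derivative has size $TN^{-\ell}$. Your normalization is wrong, though. For such a phase an exponent pair gives $(T/N)^{\kappa}N^{\lambda}=T^{\kappa}N^{\lambda-\kappa}$, not $T^{\kappa}N^{\lambda}$. With the correct formula, which is the one you actually use for the $\frac{178}{641}$ line, Bourgain's pair gives $N^{1/2}T^{13/84}$ immediately, since $\frac{55}{84}-\frac{13}{84}=\frac12$. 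The detour through $\sum n^{it}$ ``for general phases of this shape'' is unnecessary, and as stated it is not a citable result.

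The genuine gap is in verifying the hypotheses for $\phi_2$ uniformly in $r$.

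\emph{Upper bounds only.} Positivity and comparability of $\theta y^{\theta-1}\log y+y^{\theta-1}$ on $[x-r,x]$ give only $\phi_2^{(\ell)}\ll_\ell N^{-\ell}$. Lower bounds with a fixed sign do not follow.

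\emph{The ``standard fact'' is false.} One Weyl differencing replaces $f$ by $f(x+h)-f(x)$, whose second derivative is about $hf'''$, so a lower bound on $f'''$ is needed. The $B$-process requires the Legendre-dual phase to satisfy the hypotheses again, which brings in higher derivatives of $f$. More basically, the definition of an exponent pair, under which both quoted pairs are proved, requires two-sided bounds on all derivatives up to some order $P$.

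\emph{The lower bounds actually fail for some admissible $r$.} For $\theta=3$ one has $x^3-(x-r)^3=3r(x-z)(x-\bar z)$ with $z=r(\frac12+\frac{i}{2\sqrt3})$. Hence $\phi_2^{(4)}(x)=-12\,\mathrm{Re}\,(x-z)^{-4}+O(N^{-4}/\log N)$, and the main term vanishes when $r\approx 0.835x$. For instance, with $r=0.9N$ this happens at $x\approx 1.08N$, inside the summation range. By continuity in $\theta$, the same occurs for $\theta$ slightly above $3$, which lies in the range $\theta>3$ where the lemma is applied. Splitting $(N,2N]$ into $O(1)$ pieces does not help, because what is needed is $|\phi_2^{(4)}|\gg N^{-4}$, not merely a constant sign.

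The paper's one-line justification passes over this point. You correctly singled it out as the main obstacle, but you resolved it with an argument that does not hold. So the case $j=2$, uniformly in $r<0.99x$, is not established by what you wrote. It would need an additional argument, for example a separate treatment of the points where a higher derivative of $\phi_2$ degenerates.
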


\noindent
This is a standard consequence of exponent-pair theory. The first bound follows from Huxley's exponent pair $(\frac{178}{641}, \frac{365}{641})$, see \cite[Table~17.1]{Huxley96}; see also \cite[Section~3]{TTY25}. In the range $N^{1295/872}\le T\le N^{601/374}$ it is currently the strongest. The second bound follows from Bourgain's exponent pair $(\frac{13}{84}+\eps,\frac{55}{84}+\eps)$, see~\cite[Theorem~6]{Bourgain16}.

\begin{lemma}     \label{unbalanced_energy_bound}
    Let $\theta>1$ be fixed. For $N\ge 2$, $1\le M\le N$, and $T>0$, define
    \[
        A(v):=\sum_{n \asymp N} e\!\Big(v\frac{n^\theta\log n}{N^\theta\log N}\Big), \qquad
        B(v):=\sum_{m \asymp M} e\!\Big(v\frac{m^\theta\log m}{N^\theta\log N}\Big).
    \]
    Then for any $\eps>0$ one has the following two bounds.

    \smallskip

    \textit{(i) The first bound:}
    \[
        \int_0^T |A(v)|^2 |B(v)|^2\,dv
        \ll_{\theta,\eps}
        N^{\eps} \big( M^2N^2 + M^2NT \big).
    \]

    \smallskip

    \textit{(ii) Assuming additionally that $M \ge N^{\kappa}$ for some $\kappa > 0$ and $c_0 > 0$ is sufficiently small, the second bound:}
    \[
        \int_0^T |A(v)|^2 |B(v)|^2\,dv
        \ll_{\theta,\eps}
            N^{\eps} \Big( N^2M^2 + \dfrac{N^{2\theta}}{M^{2\theta-2}} \Big),
    \] if $0<T\le c_0 N^\theta / M^{\theta-1}$, and
    \[        
           \int_0^T |A(v)|^2 |B(v)|^2\,dv
        \ll_{\theta,\eps} N^{\eps} \Big( 1 + \frac{T}{N^{1.55}} \Big) \Big( N^2M^2
            + M^{\frac{374}{641}+\frac{356\theta}{641}} N^{3.411 - \frac{356\theta}{641}} \Big)
    \] otherwise.
\end{lemma}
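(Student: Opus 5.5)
We prove both bounds by the standard Fourier-to-counting reduction. Fix a non-negative Schwartz weight $w$ with $w\ge \mathbbm{1}_{[0,1]}$ and $\operatorname{supp}\widehat w\subseteq[-1,1]$. Then
\[
\int_0^T |A|^2|B|^2\,dv \ll \int_{\R} w(v/T)|A(v)|^2|B(v)|^2\,dv
\ll T\,\#\Big\{(n_1,n_2,m_1,m_2):\ |\psi(n_1)-\psi(n_2)+\psi(m_1)-\psi(m_2)|\le T^{-1}\Big\},
\]
where $\psi(x)=x^\theta\log x/(N^\theta\log N)$. Write $\mathcal N(\delta)$ for this count with $T^{-1}$ replaced by $\delta$. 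Since $\psi'(x)\asymp x^{\theta-1}/N^\theta$, the function $\psi$ is increasing and convex.

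For part (i), fix $m_1,m_2$, which gives $M^2$ choices, and count pairs $(n_1,n_2)$ with $\psi(n_1)-\psi(n_2)$ in an interval of length $2\delta$. For $n_1\neq n_2$ we have $|\psi(n_1)-\psi(n_2)|\gg |n_1-n_2|/N$. Once $n_2$ is fixed, this leaves $O(1+N\delta)$ choices of $n_1$. Hence $\mathcal N(\delta)\ll M^2N(1+N\delta)$. Multiplying by $T$ with $\delta=T^{-1}$ gives $M^2NT+M^2N^2$, which is (i). I expect this part to be routine. Writing the integral as $\int|A|^2|B|^2$ and bounding $|A|^2$ pointwise would lose too much, so I use the counting argument directly.

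For part (ii) in the small range $T\le c_0N^\theta/M^{\theta-1}$, I separate the diagonal $n_1=n_2$, which forces $|\psi(m_1)-\psi(m_2)|\le\delta$. For $m\asymp M$ the spacing of $\psi$ is $\asymp M^{\theta-1}/N^\theta\ge c_0^{-1}T^{-1}$, so this forces $m_1=m_2$ and contributes $NM$ to the count, hence at most $N^2M^2$ to the integral. Off the diagonal, $|\psi(n_1)-\psi(n_2)|\gg |n_1-n_2|/N$, while $|\psi(m_1)-\psi(m_2)|\ll M^\theta/N^\theta$. So $|n_1-n_2|\ll N^{1-\theta}M^{\theta}$, and for each $(n_2,m_1,m_2)$ there are $O(1+N\delta)$ choices of $n_1$.

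The point is to avoid paying $M^2$ for $(m_1,m_2)$ when $h=n_1-n_2$ is small. For this I parametrize by $h$ and $n_2$, so that $\psi(n_2+h)-\psi(n_2)\approx h\psi'(n_2)$. I then count $m_1,m_2$ with $\psi(m_2)-\psi(m_1)$ within $\delta$ of a prescribed value, which gives $\ll 1+\delta N^\theta/M^{\theta-1}$ per value of the $m$-difference. Summing over $h\ll N^{1-\theta}M^\theta$ should produce the term $N^{2\theta}M^{2-2\theta}$ after multiplying by $T$. I have not fully checked this bookkeeping, and I expect it to be the delicate point of this range.

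For the large range $T>c_0N^\theta/M^{\theta-1}$, I split $[0,T]$ dyadically into $[V,2V]$ and follow a Robert--Sargos-type argument. I bound
\[
\int_V^{2V}|A|^2|B|^2 \le \max_{v\asymp V}|A(v)|^{2}\int_V^{2V}|B|^2 .
\]
For $\max|A|$ I use Lemma~\ref{exp_pairs} with Huxley's pair, giving $|A|\ll N^{187/641+\eps}V^{178/641}$. For $\int_V^{2V}|B|^2$ I use the mean value bound $\ll (V N^\theta/M^{\theta-1}+1)M$. Alternatively, I apply the argument symmetrically with $B$ bounded pointwise, choosing whichever version is better for each $V$.

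Optimizing over $V$, with the factor $(1+T/N^{1.55})$ absorbing the growth for $V\ge N^{1.55}$, should give the stated term $M^{374/641+356\theta/641}N^{3.411-356\theta/641}$. Here $3.411$ is presumably a numerical upper bound for an exponent such as $2\cdot187/641+\dots$. The hypothesis $M\ge N^\kappa$ is needed so that $V$ stays in the range $V>cM$ where Lemma~\ref{exp_pairs} applies to $B$.

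The main obstacle is matching the exact exponents in this last step. My first attempt there is the interpolation between the pointwise exponent-pair bound and the mean value theorem on each dyadic block.
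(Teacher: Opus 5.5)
Your part (i) is correct. It is the counting form of the paper's argument: Lemma~\ref{Kusmin_Landau} on $A$ for $v\le N$, and $\int_X^{2X}|A|^2\ll XN\log N$ with $B$ bounded trivially for $v\ge N$.

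In the first range of (ii), your diagonal step has the inequality reversed. The condition $T\le c_0N^\theta/M^{\theta-1}$ means $M^{\theta-1}/N^\theta\le c_0T^{-1}$. So the window $T^{-1}$ is \emph{wider} than the spacing of $\psi$ on $m\asymp M$. Then $n_1=n_2$ leaves about $\min(M^2,N^\theta M^{2-\theta}/T)\gg M$ pairs $(m_1,m_2)$, not only $m_1=m_2$. Likewise, the off-diagonal range is $|n_1-n_2|\ll N^{1-\theta}M^\theta+N/T$, not $N^{1-\theta}M^\theta$.

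This is repairable within your framework. There are $\ll N(1+N^{1-\theta}M^\theta+N/T)$ admissible pairs $(n_1,n_2)$. Each has $\ll M(1+N^\theta M^{1-\theta}/T)\ll N^\theta M^{2-\theta}/T$ pairs $(m_1,m_2)$. Using (i) when $T\le N$, you get $N^2M^2+N^{1+\theta}M^{2-\theta}$. This is the geometric mean of the two target terms, so it is not $N^{2\theta}M^{2-2\theta}$ itself as you guessed, but it suffices.

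The paper is more direct. Exactly in this range, the phase of $B$ has $m$-derivative $\asymp vM^{\theta-1}/N^\theta<1$. So Lemma~\ref{Kusmin_Landau} gives $B(v)\ll N^\theta/(vM^{\theta-1})$. Together with $\int_X^{2X}|A|^2\ll XN\log N$, each block $X\ge N$ contributes $\ll N^{2\theta+1}/(XM^{2\theta-2})$, and these sum to $N^{2\theta}M^{2-2\theta}$.

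The genuine gap is the large range. Your main pairing, $\max|A|^2\cdot\int_V^{2V}|B|^2$, is the wrong way round. Your mean value bound is also misstated: it should be $\int_V^{2V}|B|^2\ll M(V+N^\theta M^{1-\theta}\log N)\ll VM\log N$ in this range. Even corrected, your pairing gives $N^{374/641}MV^{997/641}$ per block. This exceeds the paper's block bound by a factor $(N/M)^{(356\theta-267)/641}$. For example, with $\theta=3$ and $M=N^{0.9}$ you get $N^{3.894}$ at $V=N^{1.55}$, against a target of $N^{3.8}$.

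The missing idea is to bound the \emph{short} sum pointwise at its true frequency. Write $B(v)=\sum_{m\asymp M}e\big(T_M\,m^\theta\log m/(M^\theta\log M)\big)$ with $T_M=vM^\theta\log M/(N^\theta\log N)$. Then $T_M\gg M$ precisely when $v\gg N^\theta/M^{\theta-1}$, using $\log M\gg\log N$ from $M\ge N^\kappa$. This, not $V>cM$, is what the threshold and the hypothesis $M\ge N^\kappa$ are for.

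Lemma~\ref{exp_pairs} then gives $B\ll M^{187/641+\eps}T_M^{178/641+\eps}$. Pairing this with $\int_X^{2X}|A|^2\ll XN\log N$ gives $N^{1-356\theta/641}M^{374/641+356\theta/641}X^{997/641}$ per block. At $X=N^{1.55}$ the power of $N$ is $1+1.55\cdot\frac{997}{641}-\frac{356\theta}{641}<3.411-\frac{356\theta}{641}$, which is where $3.411$ comes from.

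Finally, for $v>N^{1.55}$ you cannot keep summing block bounds, since they grow like $T^{997/641}$. The linear factor $T/N^{1.55}$ comes from the Robert--Sargos scaling lemma \cite[Lemma~3]{Robert-Sargos}: $\int_0^T\ll\frac{T}{T_0}\int_0^{T_0}$ with $T_0=N^{1.55}$. In your counting language this is $T\mathcal N(1/T)\le\frac{T}{T_0}\,T_0\mathcal N(1/T_0)$, combined with the reverse comparison $T_0\mathcal N(1/T_0)\ll\int_0^{T_0}|A|^2|B|^2$.
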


\begin{proof}

    We first compute the second moment of the long sum \(A(v)\). Expanding the square, we have
    \[
        \int_X^{2X}|A(v)|^2\,dv =
        \sum_{n_1, n_2 \asymp N} \int_X^{2X} 
        e\!\Big(v\frac{n_1^\theta\log n_1-n_2^\theta\log n_2}{N^\theta\log N}\Big)\,dv.
    \]
    By the mean-value theorem,
    \[
        \Big|
        \frac{n_1^\theta\log n_1-n_2^\theta\log n_2}{N^\theta\log N}
        \Big|
        \asymp \frac{|n_1-n_2|}{N},
    \]
    and therefore
    \begin{equation} \label{A_second_moment_mixed}
        \int_X^{2X}|A(v)|^2\,dv
        \ll XN + N^2\log N.
    \end{equation}

    \vspace{2ex}
    \textit{Proof of the first bound.}
    
    First, if \(0<T<1\), then, using the trivial bounds, we get
    \begin{equation} \label{first_bound_trivial_range}
        \int_0^T |A(v)|^2 |B(v)|^2\,dv \ll N^2M^2.
    \end{equation}

    Next, split the remaining integral dyadically:
    \[
        \int_0^T |A(v)|^2|B(v)|^2\,dv
        \ll \int_0^1 |A(v)|^2|B(v)|^2\,dv +
        \sum_{\substack{X\text{ dyadic}\\ 1\le X\le T}}
        \int_X^{2X}|A(v)|^2|B(v)|^2\,dv.
    \]

    If \(1\le X\le c_1 N\) for sufficiently small $c_1 > 0$, then
    \[
        \frac{d}{dx}\Big(v\frac{x^\theta\log x}{N^\theta\log N}\Big) = v\frac{x^{\theta-1} (\theta \log x + 1)}{N^{\theta} \log N}  \asymp \frac{X}{N},
    \]
    and this derivative is monotone in \(x\). Hence Lemma~\ref{Kusmin_Landau} gives
    \[
        A(v)\ll \frac{N}{X}.
    \]
    Estimating the sum over $m \asymp M$ trivially, we obtain
    \[
        \int_X^{2X}|A(v)|^2|B(v)|^2\,dv
        \ll X\Big(\frac{N}{X}\Big)^2 M^2
        = \frac{N^2M^2}{X}.
    \]
    Summing over dyadic \(X\) with \(1\le X\le \min(T,N)\), we get
    \begin{equation} \label{first_bound_KL_range}
        \int_1^{\min(T,N)} |A(v)|^2|B(v)|^2\,dv \ll N^2 M^2.
    \end{equation}

    Finally, if \(X > c_1 N\), then we combine the trivial bound for the sum $m \asymp M$ with~\eqref{A_second_moment_mixed}:
    \[
        \int_X^{2X}|A(v)|^2|B(v)|^2\,dv \ll
        M^2 \int_X^{2X}|A(v)|^2\,dv \ll
        M^2 X N (\log N).
    \]
    Summing over dyadic \(X\) with \(N\le X\le T\), we obtain
    \begin{equation} \label{first_bound_second_mom_range}
        \int_N^T |A(v)|^2|B(v)|^2\,dv
        \ll M^2NT (\log N).
    \end{equation}
    Combining~\eqref{first_bound_trivial_range}, \eqref{first_bound_KL_range}, and~\eqref{first_bound_second_mom_range} proves the first bound.

    \vspace{2ex}
    
    \textit{Proof of the second bound.}

    The ranges \(0<v<1\) and $v \asymp X$ with \(1\le X\ll N\) are handled exactly as above, giving a contribution
    \[
        \ll N^{\eps} N^2M^2.
    \]

    We now consider dyadic blocks \(X\le v\le 2X\) with \(X\ge N\).

    \vspace{2ex}
    
    \textit{Range \(N\le X\le c_0 N^\theta/M^{\theta-1}\) with sufficiently small $c_0 > 0$.}
    When $m \asymp M$, we have
    \[
        \frac{d}{dm}\Big(v\frac{m^\theta\log m}{N^\theta\log N}\Big)
        \asymp \frac{X\,M^{\theta-1}}{N^\theta} < 1.
    \]
    Then Lemma~\ref{Kusmin_Landau} gives
    \[
        B(v)\ll \frac{N^\theta}{X\,M^{\theta-1}}.
    \]
    
    Therefore, by~\eqref{A_second_moment_mixed},
    \begin{multline*}
        \int_X^{2X}|A(v)|^2|B(v)|^2\,dv
        \ll \Big(\frac{N^\theta}{X\,M^{\theta-1}}\Big)^2
        \int_X^{2X}|A(v)|^2\,dv \ll \\
     \Big(\frac{N^\theta}{X\,M^{\theta-1}}\Big)^2
        \cdot XN (\log N) =
        \log N \frac{N^{2\theta+1}}{X\,M^{2\theta-2}}.
    \end{multline*}
    Summing over dyadic \(X\) with
    \[
        N\le X\le \min\Big(T,\frac{N^\theta}{M^{\theta-1}}\Big),
    \]
    gives
    \[
        \int_N^{\min(T,N^\theta/M^{\theta-1})}|A(v)|^2|B(v)|^2\,dv
        \ll \log N \frac{N^{2\theta}}{M^{2\theta-2}}.
    \]

    This proves the first part of the second bound.

    \vspace{2ex}
    
    \textit{Range \(c_0 N^\theta/M^{\theta-1}\le X\le N^{1.55}\).} Note that this range might be empty depending on $M,N,\theta$. Put
    \[
        T_M:=\frac{X\,M^\theta\log M}{N^\theta\log N}.
    \]
    In this range one has \(T_M\gg M\). Applying the first bound of Lemma~\ref{exp_pairs}, we obtain
    \[
        B(v) \ll M^{187/641+\eps}T_M^{178/641+\eps}.
    \]
    Hence
    \[
        B(v) \ll N^{-\frac{178\theta}{641}+\eps}
        M^{\frac{187}{641}+\frac{178\theta}{641}}
        X^{\frac{178}{641}+\eps}.
    \]
    Combining this with~\eqref{A_second_moment_mixed}, we get
    \begin{multline*}
        \int_X^{2X}|A(v)|^2|B(v)|^2\,dv \ll
        \max_{X\le v\le 2X}|B(v)|^2
        \int_X^{2X}|A(v)|^2\,dv \ll \\
        N^{-\frac{356\theta}{641}+\eps}
        M^{\frac{374}{641}+\frac{356\theta}{641}}
        X^{\frac{356}{641}+\eps}
        \cdot XN (\log N) =
        N^{1-\frac{356\theta}{641}+\eps}
        M^{\frac{374}{641}+\frac{356\theta}{641}}
        X^{\frac{997}{641}+\eps}.
    \end{multline*}
    Summing over dyadic \(X \le N^{1.55} \) we get
    \[
        \int_{N^\theta/M^{\theta-1}}^{N^{1.55}}|A(v)|^2|B(v)|^2\,dv \ll
        N^{1-\frac{356\theta}{641}+\eps}
        M^{\frac{374}{641}+\frac{356\theta}{641}}
        N^{1.55\cdot \frac{997}{641}} = M^{\frac{374}{641}+\frac{356\theta}{641}} N^{3.411 - \frac{356\theta}{641}}.
    \]

    \vspace{2ex}
    
    \textit{Range \(X>N^{1.55}\).} Here we necessarily have $T \ge N^{1.55}$.
    By a slight variation of~\cite[Lemma~3]{Robert-Sargos} we have 
    \[
        \int_0^T |A(v)|^2|B(v)|^2\,dv
        \ll N^{\eps} \frac{T}{N^{1.55}}
        \int_0^{N^{1.55}} |A(v)|^2 |B(v)|^2\,dv.
    \]
    Using the bound just proved at \(T=N^{1.55}\), we get
    \[
        \int_0^T |A(v)|^2|B(v)|^2\,dv
        \ll N^{\eps} \frac{T}{N^{1.55}}
        \Big( N^2M^2 +
            M^{\frac{374}{641}+\frac{356\theta}{641}} N^{3.411 - \frac{356\theta}{641}}
        \Big),
    \] which completes the proof.
\end{proof}

\begin{corollary} \label{energy_bound}
    For $N \ge 2$, $T > 0$, and any fixed $\theta > 1$, one has
    \[
        \int_0^T \Big| \sum_{n \asymp N} e\Big( u \frac{n^{\theta} \log n}{N^{\theta} \log N} \Big) \Big|^4 du \ll N^{4+\eps} + TN^{2.45+\eps}.
    \]
\end{corollary}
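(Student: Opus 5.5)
The plan is to obtain the corollary as the diagonal case $M=N$ of Lemma~\ref{unbalanced_energy_bound}. With $M=N$ and both sums taken over the same range $n\asymp N$, we have $B(v)=A(v)$. The normalisation $N^\theta\log N$ is the same in both sums, so the integrand $|A(v)|^2|B(v)|^2$ is exactly the fourth power in the statement. Since the ranges written $\asymp$ are flexible, the lemma applies verbatim. The hypothesis $M\ge N^{\kappa}$ of part (ii) holds with $\kappa=1$, so I will use the second bound.

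First case: $0<T\le c_0N^\theta/M^{\theta-1}=c_0N$. Part (ii) gives the bound
\[
N^{\eps}\Big(N^4+\frac{N^{2\theta}}{N^{2\theta-2}}\Big)=N^{\eps}(N^4+N^2)\ll N^{4+\eps},
\]
which is admissible.

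Second case: $T>c_0N$. Here the second alternative of part (ii) applies. I substitute $M=N$ into the secondary term. The powers of $\theta$ cancel:
\[
M^{\frac{374}{641}+\frac{356\theta}{641}}N^{3.411-\frac{356\theta}{641}}=N^{\frac{374}{641}+3.411}.
\]
A quick numerical check gives $\frac{374}{641}\approx 0.5835$, so the exponent is about $3.9945<4$. Hence the bracket is $\ll N^4$, and the bound becomes
\[
N^{\eps}\Big(1+\frac{T}{N^{1.55}}\Big)N^4\ll N^{4+\eps}+TN^{2.45+\eps}.
\]

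Combining the two cases gives the claim for all $T>0$. The only point needing care is the bookkeeping that the intermediate range $c_0N\le X\le N^{1.55}$ inside the lemma is consistent when $M=N$, since its lower endpoint is then $\asymp N$, and that the numerical exponent indeed stays below $4$. There is no real obstacle: all the analytic work was done in Lemma~\ref{unbalanced_energy_bound}.
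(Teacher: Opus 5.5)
Your proposal is correct and is exactly the paper's argument: the paper derives the corollary in one line from Lemma~\ref{unbalanced_energy_bound}~(ii) with $M=N$. As you do, the first alternative gives $N^{4+\eps}$ for $T\le c_0N$, and for $T>c_0N$ the secondary term collapses to $N^{374/641+3.411}\le N^4$, which leaves $N^{\eps}(1+T/N^{1.55})N^4\ll N^{4+\eps}+TN^{2.45+\eps}$; your numerical check that this exponent stays below $4$ is the one verification the paper leaves implicit.
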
 This follows from Lemma~\ref{unbalanced_energy_bound}~(ii) when $M=N$.

\section{Proof for $\theta > 3$} \label{sec_main_pf_large_theta}

In this section, we prove the first part of Theorem~\ref{main_thm}. The main technical tools we will use are the counting estimates given in Propositions~\ref{count_prop_large_theta} and~\ref{count_prop_large_theta_nondiag}.

Fix a sufficiently large integer $L$ and partition the range $(3,\infty)$ into intervals
\[
    (\theta_A,\theta_B), \qquad \theta_B-\theta_A=2^{-L},
\]
where $\theta_A$ runs through the values
$3+10\cdot 2^{-L},\ 3+11\cdot 2^{-L},\ 3+12\cdot 2^{-L}, \ldots$. It is enough to prove PPC for Lebesgue-almost all $\theta\in(\theta_A,\theta_B)$ for each such interval separately. Letting $L\to\infty$ and taking a countable union then yields the desired result. Write $\eps_0:=2^{-L}$. Our goal is therefore to show that the variance
\[
    V := \int_{\theta_A}^{\theta_B} \Big|
    \frac{1}{N^2} \sum_{k\le N^{1+\eps_1}}
    \widehat f\Big(\frac{k}{N}\Big)
    \sum_{1\le m\neq n\le N}
    e\big(\alpha k(m^\theta-n^\theta)\big)
    \Big|^2 d\theta
\]
satisfies $V \ll N^{-\delta}$ with some $\delta>0$.

By symmetry between the pairs $(m,n)$ and $(n,m)$, together with the inequality $|a+b|^2\le 2|a|^2+2|b|^2$,
it is enough to consider the contribution of the range $1\le m<n\le N$. We next split the proof into two cases: 1) Near-diagonal ranges, where $m$ and $n$ are of the same order; 2) Off-diagonal ranges, where $m$ is much smaller than $n$. We treat these cases separately, although most steps are similar. We note that with our choice of parameters these ranges may overlap, but this is harmless for upper bounds.

\subsection*{Near-diagonal ranges} Here we assume that $m \asymp n$. Precisely, we write $r := n-m$, and assume that $1\le r<0.99n$. We then decompose the variables $k,n,r$ into dyadic ranges
$k\in(K,2K], n\in(Y,2Y], r\in(R,2R]$,
where the final interval in each variable may be shorter if necessary. Thus
\[
    V \ll (\log N)^3
    \sum_{\substack{K,Y,R\text{ dyadic}\\
    1\le K\le N^{1+\eps_1}\\
    1\le Y\le N\\
    1\le R\le Y}}
    V_{K,Y,R},
\]
where
\[
    V_{K,Y,R} :=
    \int_{\theta_A}^{\theta_B}
    \bigg|
    \frac{1}{N^2}
    \sum_{k\asymp K}
    \widehat f\Big(\frac{k}{N}\Big)
    \sum_{n\asymp Y}
    \sum_{\substack{r\asymp R\\ r<0.99n}}
    e\Big(\alpha k\big(n^\theta-(n-r)^\theta\big)\Big)
    \bigg|^2
    d\theta.
\]
Since there are $O((\log N)^3)$ dyadic triples $(K,Y,R)$, it is sufficient, after possibly replacing $\delta$ by a smaller number, to prove the bound $V_{K,Y,R}\ll N^{-\delta}$
uniformly for all $K,Y,R$ in the above ranges.

Opening the square, moving the sums outside the integral, and using $|\widehat f(x)|\ll 1$, we obtain
\[
    V_{K,Y,R} \ll
    \frac{1}{N^4}
    \sum_{k_1,k_2\asymp K}
    \sum_{n_1,n_2\asymp Y}
    \sum_{\substack{r_1,r_2\asymp R\\ r_i<0.99 n_i}}
    \bigg|
    \int_{\theta_A}^{\theta_B}
    e\big(E_0(\theta)\big)\,d\theta
    \bigg|,
\]
where
\[
    E_0(\theta) :=
    \alpha k_1\big(n_1^\theta-(n_1-r_1)^\theta\big) -
    \alpha k_2\big(n_2^\theta-(n_2-r_2)^\theta\big).
\]

Next, we split the interval $(\theta_A,\theta_B]$ into $H$ subintervals of equal length $\frac{\eps_0}{H}$:
\[
    (\theta_A,\theta_B] =
    \bigcup_{i=0}^{H-1}
    \Big(\theta_i,\theta_i+\frac{\eps_0}{H}\Big],
    \qquad \theta_i:=\theta_A+\frac{i\eps_0}{H}.
\]
Choose
\begin{equation} \label{def_H_first}
    H:=\big\lfloor (KY^{\theta_B-1}R)^{\frac{1}{2}+\eps_2} \big\rfloor
\end{equation}
with some small $\eps_2>0$. It is therefore enough to show that
\[
    V_{K,Y,R}^{(i)}\ll H^{-1}N^{-\delta}
\]
for every $0\le i\le H-1$, where
\[
    V_{K,Y,R}^{(i)} :=
    \frac{1}{N^4}
    \sum_{k_1,k_2\asymp K}
    \sum_{n_1,n_2\asymp Y}
    \sum_{\substack{r_1,r_2\asymp R\\ r_i<0.99n_i}}
    \bigg|
    \int_{\theta_i}^{\theta_i+\eps_0/H}
    e\big(E_0(\theta)\big)\,d\theta
    \bigg|.
\]

Put
\[
    u:=\theta-\theta_i, \qquad
    \Phi(u):=E_0(\theta_i+u),
    \qquad 0\le u\le \frac{\eps_0}{H}.
\]
Then
\[
    \Phi'(u)=E_1(\theta_i+u), \qquad
    \Phi''(u)=E_2(\theta_i+u),
\]
where for $\ell\ge 1$ we write
\begin{multline*}
    E_\ell(\theta) :=
    \alpha k_1\Big(n_1^\theta(\log n_1)^\ell-(n_1-r_1)^\theta(\log(n_1-r_1))^\ell\Big) - \\
    \alpha k_2\Big(n_2^\theta(\log n_2)^\ell-(n_2-r_2)^\theta(\log(n_2-r_2))^\ell\Big).
\end{multline*}

Since $n_i\asymp Y$, $r_i\asymp R$, and $r_i < 0.99 n_i$, we have
\[
    E_2(\theta)\ll KY^{\theta_B-1}R(\log Y)^2
\]
uniformly for $\theta_i\le \theta\le \theta_i+\frac{\eps_0}{H}$. Hence, by the mean-value theorem,
\[
    |\Phi'(u)-E_1(\theta_i)| \le
    \frac{\eps_0}{H}
    \max_{\theta_i\le \theta\le \theta_i+\eps_0/H}|E_2(\theta)| \ll
    \frac{\eps_0}{H}KY^{\theta_B-1}R(\log Y)^2.
\]
Therefore, if
\begin{equation} \label{E1_large_condition_large_theta}
    |E_1(\theta_i)| \ge
    2C\frac{\eps_0}{H}KY^{\theta_B-1}R(\log Y)^2
\end{equation}
with a sufficiently large absolute constant $C>0$, then
\[
    |\Phi'(u)|\asymp |E_1(\theta_i)|
    \qquad
    \text{for all } 0\le u\le \frac{\eps_0}{H}.
\]

In this case, substituting $u := \theta - \theta_i$ and integrating by parts, we obtain
\[
    \int_{\theta_i}^{\theta_i + \eps_0 / H} e\big( E_0 (\theta) \big) d\theta = \int_0^{\eps_0/H} e(\Phi(u))\,du=
    \bigg[\frac{e(\Phi(u))}{2\pi i\,\Phi'(u)}\bigg]_0^{\eps_0/H} -
    \int_0^{\eps_0/H}
    e(\Phi(u)) \frac{\Phi''(u)}{2\pi i\,(\Phi'(u))^2}\,du,
\]
and hence
\[
    \bigg|
    \int_0^{\eps_0/H} e(\Phi(u))\,du
    \bigg| \ll
    \frac{1}{|E_1(\theta_i)|} +
    \frac{1}{|E_1(\theta_i)|^2}
    \int_0^{\eps_0/H} |\Phi''(u)|\,du.
\]
Using again the bound for $E_2$ and the assumption~\eqref{E1_large_condition_large_theta}, we get
\[
    \int_0^{\eps_0/H} |\Phi''(u)|\,du \ll
    \frac{\eps_0}{H}KY^{\theta_B-1}R(\log Y)^2 \ll
    |E_1(\theta_i)|,
\]
and therefore
\[
    \bigg|
    \int_0^{\eps_0/H} e(\Phi(u))\,du
    \bigg| \ll
    \frac{1}{|E_1(\theta_i)|}.
\]

On the other hand, we always have the trivial bound
\[
    \bigg|
    \int_0^{\eps_0/H} e(\Phi(u))\,du
    \bigg| \le
    \frac{\eps_0}{H} \ll \frac{1}{H}.
\] Finally, note that if~\eqref{E1_large_condition_large_theta} does not hold, then we have $H^{-1} \ll |E_1 (\theta_i)|^{-1}$ by~\eqref{def_H_first}. Thus, in all cases,
\[
    \bigg|
    \int_0^{\eps_0/H} e\big(E_0(\theta_i+u)\big)\,du
    \bigg| \ll
    \min\Big(\frac{1}{H},\frac{1}{|E_1(\theta_i)|}\Big),
\]
and consequently
\[
    V_{K,Y,R}^{(i)} \ll
    \frac{1}{N^4}
    \sum_{k_1,k_2\asymp K}
    \sum_{n_1,n_2\asymp Y}
    \sum_{\substack{r_1,r_2\asymp R\\ r_i<0.99n_i}}
    \min\Big(\frac{1}{H},\frac{1}{|E_1(\theta_i)|}\Big).
\]

Next, we split the sum defining $V_{K,Y,R}^{(i)}$ into dyadic ranges according to the size of $E_1(\theta_i)$. Let
\[
    \Delta\in\Big\{1,\frac12,\frac14,\ldots,\frac{O(1)}{H}\Big\}.
\]
Then
\[
    V_{K,Y,R}^{(i)} \le
    \sum_{\substack{H^{-1}\le \Delta\le 1\\ \Delta\text{ dyadic}}}
    V_{K,Y,R}^{(i,\Delta)},
\]
where
\[
    V_{K,Y,R}^{(i,\Delta)} :=
    \frac{1}{N^4} \sum_{\substack{
    k_1,k_2\asymp K\\
    n_1,n_2\asymp Y\\
    r_1,r_2\asymp R,\ r_i<0.99n_i\\
    |E_1(\theta_i)|\asymp \Delta K Y^{\theta_i-1}R\log Y}}
    \min\Big(\frac{1}{H},\frac{1}{|E_1(\theta_i)|}\Big),
\]
and in the last block, corresponding to $\Delta\asymp H^{-1}$, the condition $|E_1(\theta_i)|\asymp \Delta K Y^{\theta_i-1}R\log Y$ is replaced by $|E_1(\theta_i)|\le \Delta K Y^{\theta_i-1}R\log Y$..

It follows that
\begin{multline*}
    V_{K,Y,R}^{(i,\Delta)} \ll
    \frac{1}{N^4}
    \#\Big\{
    k_1,k_2\asymp K,\ n_1,n_2\asymp Y,\ r_1,r_2\asymp R,\ r_i<0.99n_i : \\
    |E_1(\theta_i)|\le \Delta K Y^{\theta_i-1}R\log Y
    \Big\} \cdot
    \min\Big(\frac{1}{H},\frac{1}{\Delta K Y^{\theta_i-1}R\log Y}\Big).
\end{multline*}
Applying Proposition~\ref{count_prop_large_theta}, we obtain
\begin{multline*}
    V_{K,Y,R}^{(i,\Delta)} \ll
    \frac{1}{N^4} N^{\eps_3}
    \Big(
        K^2Y^{1.999}
        +\Delta K^2 R^2 Y^2
        +K R^{\frac{1}{2}} Y^{2.499}
        +\Delta K R^{\frac{1}{2}} Y^{\frac{9}{2}}
    \Big) \cdot \\
    \min\Big(\frac{1}{H},\frac{1}{\Delta K Y^{\theta_i-1}R\log Y}\Big)
    =: T_1+T_2+T_3+T_4,
\end{multline*}
where $T_j$ denotes the contribution of the corresponding term in the brackets.

We first estimate $T_1$ and $T_3$ using the bound $H^{-1}$ in the minimum. Since $K\ll N^{1+\eps_1}$, $Y\ll N$, and $R\le Y$, we get
\[
    T_1 \ll N^{-4+\eps_3} K^2Y^{1.999}H^{-1}
    \ll N^{-4+\eps_3+3.999+2\eps_1}H^{-1},
\]
and similarly
\[
    T_3 \ll N^{-4+\eps_3} KR^{\frac{1}{2}}Y^{2.499}H^{-1}
    \ll N^{-4+\eps_3+3.999+\eps_1} H^{-1}.
\] Thus both bounds are acceptable provided $\eps_1,\eps_3$ are chosen sufficiently small.

Next, consider $T_2$. Using the second bound in the minimum and suppressing the logarithmic factor, we obtain
\[
    T_2 \ll N^{-4+\eps_3}\,\Delta K^2R^2Y^2\cdot
    \frac{1}{\Delta K Y^{\theta_i-1}R}
    \ll N^{-4+\eps_3} KRY^{3-\theta_i}.
\] Multiplying and dividing by $H$, and using the definition of $H$~\eqref{def_H_first}, we get
\[
    T_2 \ll N^{-4+\eps_3} KRY^{3-\theta_i}
    (KY^{\theta_B-1}R)^{\frac{1}{2}+\eps_2} H^{-1}.
\] Now using $K\ll N^{1+\eps_1}$, $Y\ll N$, $R\le Y$, and $\theta_i\ge \theta_A$, we find
\begin{align*}
    T_2 &\ll N^{-4+\eps_3}
    N^{1+\eps_1}
    N \cdot N^{3-\theta_A}
    N^{(\frac12+\eps_2)(1+\eps_1+\theta_B)}
    H^{-1} \\
    &\ll N^{\frac32-\frac{\theta_A}{2}+\frac{\eps_0}{2}+2\eps_1+2\eps_2\theta_A+\eps_3} H^{-1}.
\end{align*}
Since $\theta_A \ge 3+10\eps_0$, this is
\[
    \ll N^{-5\eps_0}H^{-1}
\]
provided $\eps_1,\eps_2,\eps_3>0$ are chosen sufficiently small depending on $\eps_0$.

Finally, for $T_4$ we similarly obtain
\[
    T_4 \ll N^{-4+\eps_3}\,\Delta K R^{\frac{1}{2}} Y^{\frac{9}{2}}
    \cdot \frac{1}{\Delta K Y^{\theta_i-1}R}
    \ll N^{-4+\eps_3} R^{-\frac{1}{2}} Y^{\frac{11}{2}-\theta_i}.
\]
Multiplying and dividing by $H$, and using again the definition of $H$, gives
\[
    T_4 \ll N^{-4+\eps_3}
    R^{-\frac{1}{2}} Y^{\frac{11}{2}-\theta_i}
    (KY^{\theta_B-1}R)^{\frac{1}{2}+\eps_2} H^{-1}.
\]
Estimating trivially with $R\le Y$ and $Y\ll N$, we get
\[
    T_4 \ll
    N^{\frac32-\frac{\theta_A}{2}+\frac{\eps_0}{2}+\eps_1+2\eps_2\theta_A+\eps_3} H^{-1},
\]
which is smaller than the bound obtained for $T_2$.

Collecting the estimates for $T_1,\ldots,T_4$, we conclude that
\[
    V_{K,Y,R}^{(i,\Delta)} \ll H^{-1}N^{-\delta}
\]
for some $\delta>0$, uniformly in $K,Y,R,i,\Delta$. Summing over the dyadic values of $\Delta$ and then over $i$, yields $V_{K,Y,R}\ll N^{-\delta / 2}$, as desired.

\subsection*{Off-diagonal ranges} Here we assume that $m \asymp Y_1$, $n \asymp Y_2$ for $Y_1 \le Y_2$, and that $n-m \gg n$. A major part of the proof is identical to that in the near-diagonal range. We note that $Y_1$ now essentially plays a role of $R$ and $Y_2$ essentially plays a role of $Y$ in most steps, and the typical size of the difference $Y^{\theta-1} R \log Y$ is replaced by just $Y_2^{\theta} \log Y_2$.

Therefore, we can write
\[
    V \ll (\log N)^3 \sum_{\substack{K, Y_1, Y_2 \text{ dyadic} \\ 1 \le K \le N^{1+\eps_1} \\ 1 \le Y_2 \le N \\ 1 \le Y_1 < Y_2}} V_{K, Y_1, Y_2},
\] with
\[
    V_{K, Y_1, Y_2} := \int_{\theta_A}^{\theta_B}
    \bigg| \frac{1}{N^2} \sum_{k\asymp K}
    \widehat f\Big(\frac{k}{N}\Big)
    \sum_{n\asymp Y_2}
    \sum_{\substack{m \asymp Y_1 \\ m < 0.1n}} 
    e\Big(\alpha k\big(n^\theta-m^\theta\big)\Big)
    \bigg|^2 d\theta.
\] Next, choosing
\[
    H := \big\lfloor (KY_2^{\theta_B})^{\frac{1}{2}+\eps_2} \big\rfloor,
\] we repeat all the steps until we arrive at the inequality
\begin{multline*}
    V_{K,Y_1,Y_2}^{(i,\Delta)} := \frac{1}{N^4} \sum_{\substack{k_1, k_2 \asymp K \\ n_1, n_2 \asymp Y_2 \\ m_1, m_2 \asymp Y_1, n_i > 10 m_i \\ |E_1 (\theta_i)| \asymp \Delta KY_2^{\theta_i} \log Y_2}} \min \Big( \frac{1}{H}, \frac{1}{|E_1 (\theta_i)|} \Big) 
    \ll \\ 
    \frac{1}{N^4} \#\Big\{
    k_1,k_2\asymp K,\ m_1,m_2\asymp Y_1,\ n_1,n_2\asymp Y_2, \ n_i > 10m_i: \\
    |E_1(\theta_i)|\le \Delta K Y_2^{\theta_i} \log Y_2 \Big\} \cdot
    \min\Big(\frac{1}{H},\frac{1}{\Delta K Y_2^{\theta_i} \log Y_2 }\Big),
\end{multline*} where
\[
    E_1 (\theta) = \alpha k_1 \big( n_1^{\theta} \log n_1 - m_1^{\theta} \log m_1 \big) - \alpha k_2 \big( n_2^{\theta} \log n_2 - m_2^{\theta} \log m_2 \big). 
\] 

Next, applying Proposition~\ref{count_prop_large_theta_nondiag}, we get
\[
    V_{K,Y_1,Y_2}^{(i,\Delta)} \ll \frac{1}{N^4} (KY_2)^{\eps_3} \Big( K^2 Y_2^{1.999} + \Delta K^2 Y_2^4 + KY_2^{2.999} + \Delta K Y_2^5  \Big) \min \Big( \frac{1}{H}, \frac{1}{\Delta K Y_2^{\theta_i} \log Y_2} \Big).
\] The rest of the computation is the same as in the near-diagonal range after taking $R = Y_2 = Y$.

We remark that the restriction $\theta < 7$ in Proposition~\ref{count_prop_large_theta_nondiag} is rather technical (note that the result for $\theta > 7$ follows from~\cite{Technau_Yesha}), and the upper threshold can easily be increased to a larger value, for example to $\theta = 8$. It could perhaps be removed completely by a more careful analysis or by splitting some parameters into additional ranges, but for the sake of simplicity we do not pursue this here.

\section{Proof for $0<\theta<\frac{3}{5}$} \label{sec_main_pf_small_theta}

In this section, we prove the second part of Theorem~\ref{main_thm}, assuming Propositions~\ref{count_small_theta_diag} and~\ref{count_small_theta_nondiag}.

\subsection*{Setup}

The crucial input in the argument for small $\theta$ is the following statement, which is essentially a consequence of a triple Poisson summation.

\begin{proposition} \label{prop_1}
	Let $\alpha, \theta, \varepsilon$ be real numbers such that $\alpha > 0$, $\varepsilon > 0$, and $3\varepsilon < \theta < 1 - 3\varepsilon$. For $K, Y_1, Y_2 > 0$ define the subset $\mathcal{N}_{\theta}(K,Y_1,Y_2)$ by
	\begin{gather*}
		\mathcal{N}_{\theta}(K,Y_1,Y_2) :=
		\bigl\{ (m_1,m_2): \\
		\alpha \theta K (2Y_1)^{\theta-1} \le m_1 < 2 \alpha \theta K Y_1^{\theta-1}, \\
		\alpha \theta K (2Y_2)^{\theta-1} \le m_2 < 2 \alpha \theta K Y_2^{\theta-1}, \\
		m_1 < m_2
		\bigr\}.
	\end{gather*}
	Then we have
	\begin{equation} \label{main_transform}
		\frac{2}{N^2} \sum_{k \le N^{1+\varepsilon}} \hat{f} \bigl( \tfrac{k}{N} \bigr) \Big| \sum_{1 \le y \le N} e\bigl( \alpha k y^{\theta} \bigr) \Big|^2 =
		f(0) + o(1) + O\bigl(S(N)\bigr),
	\end{equation}
	where
	\begin{equation} \label{new_expsum}
		S(N) :=
		\frac{1}{N^2}
		\sum_{\substack{K \le N^{1+\varepsilon} \\ \text{dyadic}}}
		\sum_{\substack{Y_1, Y_2 \le N \\ \text{dyadic}}}
		K^{\frac{1}{1-\theta}} \bigl| S(K,Y_1,Y_2) \bigr|,
	\end{equation}
	\begin{equation} \label{dyadic_sum}
		S(K,Y_1,Y_2) :=
		\sum_{(m_1,m_2) \in \mathcal{N}_{\theta}(K,Y_1,Y_2)}
		\Big( \frac{1}{m_1 m_2} \Big)^{\frac{2-\theta}{2-2\theta}}
		\sum_{L_1 \le \ell < L_2}
		\frac{c_1}{\sqrt{\eta}}
		\Big(\frac{\ell}{\eta}\Big)^{\frac{1}{2\theta} - 1}
		e\bigl( c_2\ell^{\frac{1}{\theta}} \eta^{1 - \frac{1}{\theta}} \bigr),
	\end{equation}
	\begin{gather*}
		L_1 = \frac{c_3 \eta}{1-\theta} K_1^{\frac{\theta}{1-\theta}}, \qquad
		L_2 = \frac{c_3 \eta}{1-\theta} K_2^{\frac{\theta}{1-\theta}}, \qquad
		c_3 = (\alpha \theta)^{\frac{1}{1-\theta}} \Big( \frac{1}{\theta} - 1 \Big), \\
		c_1 = \frac{1 - \theta}{\sqrt{c_3 \theta}} \Big( \frac{1-\theta}{c_3} \Big)^{\frac{1}{2\theta} - 1}, \qquad
		c_2 = -\theta \Big( \frac{1-\theta}{c_3} \Big)^{\frac{1}{\theta} - 1},
	\end{gather*}
	\begin{gather}
		K_1 := \max \Big( K, \frac{m_1}{2\alpha \theta Y_1^{\theta-1}},  \frac{m_2}{2\alpha \theta Y_2^{\theta-1}} \Big), \label{K_1_small} \\
		K_2 := \min \Big( 2K, \frac{m_1}{\alpha \theta (2Y_1)^{\theta-1}}, \frac{m_2}{\alpha \theta (2Y_2)^{\theta-1}} \Big), \label{K_2_small}
	\end{gather}
	with
	\[
		\eta(m_1,m_2) := m_1^{-\frac{\theta}{1-\theta}} - m_2^{-\frac{\theta}{1-\theta}}.
	\]
	The implied constant in the error term in~\eqref{main_transform} depends on $f$, $\alpha$, $\theta$, and $\varepsilon$.
\end{proposition}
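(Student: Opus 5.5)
We prove Proposition~\ref{prop_1} by a triple Poisson summation, following~\cite{LST, Shubin_Radziwill}.

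\textbf{Step 1: dyadic decomposition and the diagonal.} Expand the square in~\eqref{main_transform} and split $y$ and $k$ dyadically, with $y\asymp Y$ and $k\asymp K$. The diagonal $y_1=y_2$ contributes
\[
\frac{2}{N}\sum_{k\le N^{1+\varepsilon}}\hat f\Big(\frac kN\Big),
\]
which by Poisson summation in $k$ equals $f(0)+o(1)$, up to the constant normalisation built into the paper's convention.

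\textbf{Step 2: Poisson in $y$.} For the remaining sum over $y\asymp Y$ we apply Poisson summation (van der Corput's B-process) to $\sum_y e(\alpha k y^\theta)$. The phase $\alpha k y^\theta - m y$ has a stationary point exactly when $m=\alpha\theta k y^{\theta-1}$. Hence only frequencies $m$ in the range $\alpha\theta K(2Y)^{\theta-1}\le m<2\alpha\theta KY^{\theta-1}$ survive; this is the origin of the ranges in $\mathcal N_\theta(K,Y_1,Y_2)$. Frequencies outside this range are handled by the first-derivative bound (Lemma~\ref{Corput_lemma} with $\ell=1$) and give negligible error. The stationary phase evaluation of the surviving terms is
\[
y_0=\Big(\frac{m}{\alpha\theta k}\Big)^{\frac1{\theta-1}},\qquad
\text{value } -c\,(\alpha k)^{\frac1{1-\theta}}\,m^{-\frac{\theta}{1-\theta}},
\]
with amplitude of size $(k Y^{\theta-2})^{-1/2}$. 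Doing this for both $y_1$ and $y_2$ turns the square into a sum over pairs $(m_1,m_2)$ of terms
\[
e\Big(c\,k^{\frac1{1-\theta}}\,\eta(m_1,m_2)\Big).
\]
Ordering so that $m_1<m_2$ makes $\eta>0$. The constraint $k\asymp K$, together with $y_0(k,m_i)\asymp Y_i$, restricts $k$ to the interval $[K_1,K_2]$ of~\eqref{K_1_small}--\eqref{K_2_small}. The amplitudes combine into $(m_1m_2)^{-\frac{2-\theta}{2-2\theta}}$ times a power of $k$, and this power of $k$ is what eventually produces the normalising factor $K^{\frac1{1-\theta}}$.

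\textbf{Step 3: Poisson in $k$.} We now apply a third Poisson summation (stationary phase) to
\[
\sum_{K_1\le k\le K_2}\hat f(k/N)\,k^{a}\,e\big(c\,k^{\frac1{1-\theta}}\eta\big).
\]
The derivative of the phase is $\frac{c}{1-\theta}k^{\frac\theta{1-\theta}}\eta$, so the dual variable $\ell$ ranges over $[L_1,L_2)$ with $L_j$ as in the statement. The zero frequency is not excluded here, but it is nonstationary since $\eta$ has fixed sign. Solving the stationary point gives
\[
k_0\asymp(\ell/\eta)^{\frac{1-\theta}\theta},
\]
and the resulting phase is $c_2\,\ell^{1/\theta}\eta^{1-1/\theta}$, with amplitude $c_1\eta^{-1/2}(\ell/\eta)^{\frac1{2\theta}-1}$. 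The explicit constants $c_1,c_2,c_3$ come from a direct computation. The smooth weight $\hat f(k/N)$ and the leftover power $k^a$ are evaluated at $k_0$; since they are bounded and smooth, they may be absorbed into the $O(\cdot)$.

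\textbf{Main obstacle.} The hard part is controlling the error terms uniformly. Each stationary phase expansion produces secondary terms and boundary terms, coming from endpoints of the $y$- and $k$-ranges that fall near stationary points and from the sharp cutoffs $K_1,K_2$. All of these must be shown to total $o(1)$ after summation over dyadic $K,Y_1,Y_2$. I would rely on a uniform stationary-phase lemma with explicit error of the form
\[
O\big(\lambda_2^{-1/2}+\min(1,\|\phi'\|^{-1})\big)
\]
at the endpoints, as in~\cite{LST, Shubin_Radziwill}, using $3\varepsilon<\theta<1-3\varepsilon$ to keep the exponents bounded away from the degenerate values. The range $k\le N^{1+\varepsilon}$ together with the decay of $\hat f$ ensures that these errors contribute $N^{-c}$.
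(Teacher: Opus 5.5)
Your overall route is the paper's. You do a triple Poisson summation as in \cite{Shubin_Radziwill}, first in $y$ and then in $k$. You place the $k$-sum inside the $(m_1,m_2)$-sum so that it runs over $[K_1,K_2]$, and you apply stationary phase to the weighted $k$-sum directly. This is exactly the paper's modification: no partial summation in $k$, and \cite[(8.47)]{IK} in place of \cite[Theorem~8.16]{IK}.

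However, your treatment of the diagonal has a genuine gap. You take $f(0)$ from $y_1=y_2$, then apply the $B$-process to $\sum_y e(\alpha k y^\theta)$ and keep only $m_1<m_2$ (and conjugates). The pairs $m_1=m_2$ are silently dropped, and they are not small. Write $\phi(y)=\alpha k y^\theta$, with stationary points $y_m$ and amplitudes $|a_m(k)|^2=1/|\phi''(y_m)|\asymp Y^{2-\theta}/K$. There are $\asymp KY^{\theta-1}$ frequencies per block $y\asymp Y$, and since $|dm|=|\phi''(y)|\,dy$ one has $\sum_m|a_m(k)|^2\approx Y$. Summed over dyadic $Y$ and weighted by $\frac{2}{N^2}\hat f(k/N)$, this is a second full copy of $f(0)$.

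So there are two possibilities. If you transform the whole square, the main term is counted twice. If you transform only $\sum_{y_1\neq y_2}=|\sum_y|^2-N$, you must prove
\[
\frac{2}{N^2}\sum_{k\le N^{1+\eps}}\hat f\Big(\frac kN\Big)\Big(\sum_m|a_m(k)|^2-N\Big)=o(1),
\]
and also control the cross terms between the stationary-phase main terms and the $B$-process remainder. This Euler--Maclaurin comparison of $\sum_m 1/|\phi''(y_m)|$ with the length of the $y$-range is the main-term evaluation the paper inherits from \cite{Shubin_Radziwill}. It is missing from your proposal, and since $\mathcal N_\theta(K,Y_1,Y_2)$ contains only $m_1<m_2$, \eqref{main_transform} does not follow without it.

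A secondary point concerns Step~3, where you absorb $\hat f(k_\ell/N)$ and $k_\ell^{1/(1-\theta)}$ into the $O(\cdot)$ because they are ``bounded and smooth''. That reasoning is not valid. These weights vary with $\ell$ and with $\eta(m_1,m_2)$, and a varying weight inside an oscillating sum cannot be pulled outside $|S(K,Y_1,Y_2)|$.

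Removing the weight requires partial summation in $\ell$ and $(m_1,m_2)$. That produces maxima over subranges, the same kind of maximum (over $K\le\widetilde K\le 2K$) that the paper's modification is designed to avoid. The honest formulation keeps $\hat f(k_\ell/N)(k_\ell/K)^{1/(1-\theta)}$ as an additional smooth factor in \eqref{dyadic_sum}. It is then handled together with the other smooth coefficients in the later partial-summation step of Section~\ref{sec_main_pf_small_theta}.
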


\begin{proof}
    This is a slightly adjusted version of~\cite[Proposition~1.3]{Shubin_Radziwill}. Since the main part of the proof is identical, we only explain the final modification.

    The main difference from~\cite[Proposition~1.3]{Shubin_Radziwill} is that here we wish to avoid taking the maximum over $K\le \widetilde K\le 2K$. To achieve this, after Poisson summation in $m_1,m_2$, we do not apply partial summation in $k$, but instead interchange the order of summation over $k$ and over $m_1,m_2$. Then the term denoted by $E_{1,1}(K)$ in~\cite{Shubin_Radziwill} becomes
    \begin{multline*}
        E_{1,1}(K)
        =
        \sum_{\substack{Y_1,Y_2 \le N\\ \text{dyadic}}}
        \sum_{\substack{m_1\neq m_2\\
        \alpha\theta K(2Y_i)^{\theta-1}\le m_i<2\alpha\theta K Y_i^{\theta-1}}}
        \Big(\frac{1}{m_1m_2}\Big)^{\frac{2-\theta}{2-2\theta}} \cdot \\
        \sum_{K_1\le k<K_2}
        \widehat f\Big(\frac{k}{N}\Big)
        \Big(\frac{c_4}{\sqrt{k}}\Big)^2
        k^{\frac{2-\theta}{1-\theta}}
        e\big(c_3 k^{\frac{1}{1-\theta}}\eta\big),
    \end{multline*}
    where
    \[
        K_1 = \max \Big( K, \frac{m_1}{2\alpha \theta Y_1^{\theta-1}}, \frac{m_2}{2\alpha \theta Y_2^{\theta-1}} \Big), \qquad
        K_2 = \min \Big( 2K, \frac{m_1}{\alpha \theta (2Y_1)^{\theta-1}}, \frac{m_2}{\alpha \theta (2Y_2)^{\theta-1}} \Big).
    \]
    Applying~\cite[(8.47)]{IK} to the sum over $k$ in place of~\cite[Theorem~8.16]{IK}, one obtains~\eqref{dyadic_sum}, and this completes the proof.
\end{proof}

Proposition~\ref{prop_1} shows that it is sufficient to prove that for almost all $\theta \in (0, \frac{3}{5})$ one has
\[
    S(N)=o(1)
    \qquad (N\to\infty).
\] Since there are only $O((\log N)^3)$ dyadic triples $(K,Y_1,Y_2)$, it is enough to show that, for all dyadic values of $K,Y_1,Y_2$ such that $Y_1,Y_2\le N$ and $K\le N^{1+\eps_1}$, and for almost all $\theta\in(\theta_A,\theta_B)\subset (0,\frac35)$, one has
\[
    \frac{K^{\frac{1}{1-\theta}}}{N^2}\,|S(K,Y_1,Y_2)| \le N^{-\delta}
\] with some absolute $\delta>0$. 

We now write
\[
    m_1=:m, \qquad m_2=:m+r.
\]
Next, we again decompose the sum \(S(K,Y_1,Y_2)\) into a near-diagonal part and an off-diagonal part. Namely,
\[
    |S(K,Y_1,Y_2)| \le
    \Id_{Y_1 \le 10Y_2}
    \sum_{\substack{R \text{ dyadic}\\ R\ll K Y_1^{\theta-1}}}
    \big| S_1(K,Y_1,Y_2,R) \big| +
    \Id_{Y_1>10Y_2} \big| S_2(K,Y_1,Y_2) \big|,
\]
where
\[
    S_1(K,Y_1,Y_2,R) :=
    \sum_{(m,r)\in \mathcal M_\theta(K,Y_1,Y_2,R)}
    \Big(\frac{1}{m(m+r)}\Big)^{\frac{2-\theta}{2-2\theta}}
    \sum_{L_1\le \ell <L_2}
    \frac{c_1}{\sqrt{\eta}}
    \Big(\frac{\ell}{\eta}\Big)^{\frac{1}{2\theta}-1}
    e\big(c_2\ell^{\frac1\theta}\eta^{1-\frac1\theta}\big),
\]
with
\begin{multline*}
    \mathcal M_\theta(K,Y_1,Y_2,R) :=
    \Big\{ (m,r): \ 
    \alpha\theta K(2Y_1)^{\theta-1}\le m<2\alpha\theta K Y_1^{\theta-1}, \\
    \alpha\theta K(2Y_2)^{\theta-1} \le m+r < 2\alpha\theta K Y_2^{\theta-1},
    \ R<r\le 2R \Big\},
\end{multline*}
and
\[
    S_2(K,Y_1,Y_2):=S(K,Y_1,Y_2).
\]
Thus, in the case where $m_1$ and $m_2$ are close to each other, we further localize the difference $r=m_2-m_1$ dyadically. When $m_1$ and $m_2$ are far apart, as certainly happens if $Y_1>10Y_2$, we work with the full sum. The reason for this decomposition is that the phase and its derivatives have different natural sizes in the two regimes.

We are therefore reduced to proving the variance bounds
\begin{equation} \label{original_expressions}
    \int_{\theta_A}^{\theta_B}
    \Big| \frac{K^{\frac{1}{1-\theta}}}{N^2} S_1(K,Y_1,Y_2,R) \Big|^2 d\theta
    \ll N^{-\delta}, \qquad
    \int_{\theta_A}^{\theta_B}
    \Big| \frac{K^{\frac{1}{1-\theta}}}{N^2} S_2(K,Y_1,Y_2) \Big|^2 d\theta
    \ll N^{-\delta},
\end{equation}
uniformly in $K,Y_1,Y_2,R$.

As in the previous section, we split these integrals into shorter ones:
\[
    I_1^{(i)} :=
    \int_{\theta_i}^{\theta_i+ \eps_0 / H_1}
    \Big| \frac{K^{\frac{1}{1-\theta}}}{N^2} S_1(K,Y_1,Y_2,R) \Big|^2 d\theta,
    \qquad I_2^{(i)} :=
    \int_{\theta_i}^{\theta_i+ \eps_0 / H_2}
    \Big| \frac{K^{\frac{1}{1-\theta}}}{N^2} S_2(K,Y_1,Y_2) \Big|^2 d\theta,
\]
where
\[
    H_1:= \big\lfloor (RY_1)^{\frac{1}{2}+\eps_2} \big\rfloor,
    \qquad H_2:= \big\lfloor (K Y_1^{\theta_B})^{\frac{1}{2}+\eps_2} \big\rfloor.
\] It is enough to prove
\[
    I_1^{(i)}\ll H_1^{-1}N^{-\delta},
    \qquad I_2^{(i)}\ll H_2^{-1}N^{-\delta}.
\]

\subsection*{Simplifying the sums}

We now make several technical reductions that bring~\eqref{original_expressions} into a more convenient form. We will show that it is enough to prove
\begin{gather*}
    \frac{Y_1^{3-2\theta_i}}{N^4R}
    \int_{\theta_i}^{\theta_i + \eps_0 / H_1}
    \bigg| \sum_{(m,r)\in \mathcal M_{\theta_i}(K,Y_1,Y_2,R)}
    \sum_{L_1(\theta_i)\le \ell <L_2(\theta_i)}
    e\big(c_2\ell^{\frac1\theta}\eta^{1-\frac1\theta}\big)
    \bigg|^2 d\theta
    \ll H_1^{-1}N^{-\delta}, \\
    \frac{Y_1^{2-2\theta_i}Y_2^{2-\theta_i}}{N^4K}
    \int_{\theta_i}^{\theta_i + \eps_0 / H_2}
    \bigg| \sum_{(m_1,m_2)\in \mathcal N_{\theta_i}(K,Y_1,Y_2)}
    \sum_{L_1(\theta_i)\le \ell <L_2(\theta_i)}
    e\big(c_2\ell^{\frac1\theta}\eta^{1-\frac1\theta}\big)
    \bigg|^2 d\theta
    \ll H_2^{-1}N^{-\delta}.
\end{gather*}

These new integrals are obtained from the original ones $I_1^{(i)}$ and $I_2^{(i)}$ by replacing the endpoints $L_j(\theta)$ by $L_j(\theta_i)$, replacing the sets $\mathcal M_\theta,\mathcal N_\theta$ by $\mathcal M_{\theta_i},\mathcal N_{\theta_i}$, and then applying partial summation to the sums over $\ell,m,r$ and over $\ell,m_1,m_2$, respectively.

\vspace{2ex}
\textit{Step 1. Replace the endpoints in the sum over \(\ell\).}

Recall that
\[
    L_1(\theta)=\frac{c_3\eta}{1-\theta} K_1^{\frac{\theta}{1-\theta}}.
\]
By the mean-value theorem,
\[
    L_1(\theta)-L_1(\theta_i)
    =
    L_1'(\theta_0)\cdot \frac{\eps_0}{H_j}
\]
for some \(\theta_0\in[\theta_i,\theta]\), where \(j=1\) or \(j=2\), depending on the case.

We first consider \(I_1^{(i)}\). Here
\[
    \eta(m,r)=m^{-\frac{\theta}{1-\theta}}-(m+r)^{-\frac{\theta}{1-\theta}}
    \asymp r m^{-\frac{1}{1-\theta}}
    \asymp R(KY_1^{\theta-1})^{-\frac{1}{1-\theta}},
\]
and similarly
\[
    \eta_\theta'(m,r) \asymp
    R(KY_1^{\theta-1})^{-\frac{1}{1-\theta}}\log(KY_1^{\theta-1}).
\]
Therefore
\[
    L_1'(\theta) \ll
    R(KY_1^{\theta-1})^{-\frac{1}{1-\theta}}(\log K)K^{\frac{\theta}{1-\theta}}
    \ll \frac{RY_1}{K}\log K,
\]
and hence
\[
    L_1(\theta)-L_1(\theta_i) \ll_{\eps_0}
    \frac{RY_1}{KH_1}\log K.
\]
Thus, replacing \(L_1(\theta)\) by \(L_1(\theta_i)\) changes the sum over \(\ell\) by at most
\[
    \max\Big(1,\frac{RY_1}{KH_1}\log K\Big)
\]
terms. Estimating these extra terms trivially and using the bound $|a+b|^2 \le 2|a|^2 + 2|b|^2$, we get the following bound for the error term:
\begin{multline*}
    \frac{Y_1^{3-2\theta_i}}{N^4 R} \int_{\theta_i}^{\theta_i + \eps_0 / H_1} \Big| \sum_{m,r} \max \Big( 1, \frac{RY_1}{KH_1} \log K \Big) \Big|^2 d\theta \ll \\
    \frac{1}{H_1}
    \Big( \frac{K^2 RY_1}{N^4} +
    \frac{R^2Y_1^2}{N^4} \Big)
    \ll \frac{1}{H_1}
    \Big( N^{3+3\eps_1+\frac35-4} +
    N^{2+2\eps_1+2\cdot\frac35-4} \Big),
\end{multline*} which is clearly sufficient.

For \(I_2^{(i)}\), we have
\[
    \eta(m_1,m_2) =
    m_1^{-\frac{\theta}{1-\theta}}-m_2^{-\frac{\theta}{1-\theta}}
    \asymp m_1^{-\frac{\theta}{1-\theta}}
    \asymp (KY_1^{\theta-1})^{-\frac{\theta}{1-\theta}},
\]
and therefore
\[
    L_1'(\theta) \ll
    (KY_1^{\theta-1})^{-\frac{\theta}{1-\theta}}(\log K)K^{\frac{\theta}{1-\theta}} \ll
    Y_1^\theta \log K.
\]
Thus
\[
    L_1(\theta)-L_1(\theta_i) \ll_{\eps_0}
    \frac{Y_1^\theta \log K}{H_2},
\]
and replacing \(L_1(\theta)\) by \(L_1(\theta_i)\) changes the sum by at most
\[
    \max\Big(1,\frac{Y_1^\theta \log K}{H_2}\Big)
\]
extra terms. Estimating these trivially, we obtain an error
\[
    \ll \frac{1}{H_2}
    \Big( \frac{K^3Y_2^{\theta_B}}{N^4} +
    \frac{K^2Y_1^{\theta_B}Y_2^{\theta_B}}{N^4}
    \Big) \ll \frac{1}{H_2}
    \Big( N^{3+3\eps_1+\frac35-4}
    + N^{2+2\eps_1+2\cdot\frac35-4} \Big),
\] which is again sufficient.

The upper limit \(L_2(\theta)\) is replaced by \(L_2(\theta_i)\) in exactly the same way.

\vspace{2ex}
\textit{Step 2. Replace the sets \(\mathcal M_\theta,\mathcal N_\theta\) by \(\mathcal M_{\theta_i},\mathcal N_{\theta_i}\).}

The argument is similar to the previous one. Here the relevant endpoint is
\[
    M(\theta):=2\alpha\theta K Y_1^{\theta-1},
\] so
\[
    M'(\theta)\ll K Y_1^{\theta-1}\log Y_1,
    \qquad
    M(\theta)-M(\theta_i)\ll_{\eps_0}\frac{K Y_1^{\theta-1}\log Y_1}{H_j}.
\]

In the near-diagonal case this leads to an error
\[
    \ll \frac{1}{H_1}
    \Big( \frac{Y_1^{5-2\theta_A}R^3}{N^4K^2}
    + \frac{Y_1^2R^2}{N^4} \Big)
    \ll \frac{1}{H_1} \Big(
    N^{1+\eps_1+2\eps_0+2+\frac35-4}
    + N^{2+2\eps_1+2\cdot\frac35-4} \Big),
\] which is sufficient.

In the off-diagonal case the largest contribution comes from the (shorter) \(m_1\)-sum, and similarly one gets
\[
    \ll \frac{1}{H_2}
    \Big( \frac{K Y_2^{\theta_B}Y_1^2}{N^4}
    + \frac{K^2Y_2^{\theta_B}Y_1^{\theta_B}}{N^4} \Big)
    \ll \frac{1}{H_2} \Big( N^{1+\eps_1+\frac35+2-4}
    + N^{2+2\eps_1+2\cdot\frac35-4} \Big),
\]
which is also sufficient. The same argument applies when replacing the endpoints for the \(m_2\)-sum.

\vspace{2ex}
\textit{Step 3. Replace \(\theta\) by \(\theta_i\) in the smooth coefficients.}

The coefficients in front of the exponential sums $S(K, Y_1, Y_2, R)$ and $S(K, Y_1, Y_2)$ are smooth functions of \(\theta\), built out of powers of \(\ell,m,m+r,\eta\), and similarly in the off-diagonal case out of \(\ell,m_1,m_2,\eta\). By Taylor expansion at \(\theta=\theta_i\), each such coefficient changes by a relative factor
\[
    \ll \frac{\log K}{H_j},
\]
and therefore every such replacement contributes an error of the same order as in the previous two steps. Consequently, we may replace all occurrences of \(\theta\) in the smooth weights by \(\theta_i\).

\vspace{2ex}
\textit{Step 4. Partial summation.}

Applying partial summation in \(\ell,m,r\) for \(I_1^{(i)}\) and in \(\ell,m_1,m_2\) for \(I_2^{(i)}\), we obtain
\begin{gather*}
    I_1^{(i)} \ll
    \frac{Y_1^{3-2\theta_i}}{N^4R}
    \int_{\theta_i}^{\theta_i+ \eps_0 / H_1}
    \bigg| \sum_{(m,r)\in \mathcal M_{\theta_i}(K,Y_1,Y_2,R)}
    \sum_{L_1(\theta_i)\le \ell <L_2(\theta_i)}
    e\big(c_2\ell^{\frac1\theta}\eta^{1-\frac1\theta}\big) \bigg|^2 d\theta, \\
    I_2^{(i)}
    \ll \frac{Y_1^{2-2\theta_i}Y_2^{2-\theta_i}}{N^4K} \int_{\theta_i}^{\theta_i+ \eps_0 / H_2}
    \bigg| \sum_{(m_1,m_2)\in \mathcal N_{\theta_i}(K,Y_1,Y_2)}
    \sum_{L_1(\theta_i)\le \ell <L_2(\theta_i)}
    e\big(c_2\ell^{\frac1\theta}\eta^{1-\frac1\theta}\big)
    \bigg|^2 d\theta.
\end{gather*}

\subsection*{Integral evaluation}

Opening the squares and moving the sums outside the integrals, we get
\begin{gather} \label{Int_1_eval}
    I_1^{(i)} \ll \frac{Y_1^{3-2\theta_i}}{N^4R}
    \sum_{m_1,m_2\asymp K Y_1^{\theta_i-1}}
    \sum_{\substack{r_1,r_2\asymp R\\ m_j+r_j\asymp K Y_2^{\theta_i-1}}}
    \sum_{\ell_1,\ell_2\asymp RY_1K^{-1}}
    \bigg| \int_{\theta_i}^{\theta_i+ \eps_0 / H_1}
    e\big( c_2\ell_1^{\frac1\theta}\eta_1^{1-\frac1\theta} - c_2\ell_2^{\frac1\theta}\eta_2^{1-\frac1\theta}
    \big) d\theta \bigg|,
    \\ \label{Int_2_eval}
    I_2^{(i)} \ll
    \frac{Y_1^{2-2\theta_i}Y_2^{2-\theta_i}}{N^4K}
    \sum_{\substack{m_1,m_3\asymp K Y_1^{\theta_i-1}\\ m_2,m_4\asymp K Y_2^{\theta_i-1}}}
    \sum_{\ell_1,\ell_2\asymp Y_1^{\theta_i}}
    \bigg| \int_{\theta_i}^{\theta_i+ \eps_0 / H_2}
    e\big(c_2\ell_1^{\frac1\theta}\eta_1^{1-\frac1\theta} - c_2\ell_2^{\frac1\theta}\eta_2^{1-\frac1\theta}
    \big) d\theta \bigg|,
\end{gather}
where
\[
    \eta_1 = m_1^{-\frac{\theta}{1-\theta}}-(m_1+r_1)^{-\frac{\theta}{1-\theta}},
    \qquad \eta_2 =
    m_2^{-\frac{\theta}{1-\theta}}-(m_2+r_2)^{-\frac{\theta}{1-\theta}}
\]
in the near-diagonal case, and
\[
    \eta_1 = m_1^{-\frac{\theta}{1-\theta}}-m_2^{-\frac{\theta}{1-\theta}}, \qquad
    \eta_2 = m_3^{-\frac{\theta}{1-\theta}}-m_4^{-\frac{\theta}{1-\theta}}
\]
in the off-diagonal case. Since we only need upper bounds for the absolute values of these integrals, the implied constants in the $\asymp$-conditions under the sums may be chosen sufficiently small or large so that the resulting summation ranges are independent at the endpoints.

Define
\[
    E_0(\theta)
    := c_2\ell_1^{\frac1\theta}\eta_1^{1-\frac1\theta} -
    c_2\ell_2^{\frac1\theta}\eta_2^{1-\frac1\theta}.
\]
Differentiating, we find
\[
    E_1(\theta) := \frac{d}{d\theta} E_0 (\theta) = F_1(\ell_1,\eta_1,\theta)\ell_1^{\frac1\theta}\eta_1^{1-\frac1\theta} - F_1(\ell_2,\eta_2,\theta)\ell_2^{\frac1\theta}\eta_2^{1-\frac1\theta},
\]
where
\[
    F_1(\ell,\eta,\theta)
    := D(\theta)
    -\frac{1}{\theta^2}\log \ell
    +\frac{1}{\theta^2}\log \eta
    +\Big(1-\frac{1}{\theta}\Big)\frac{\eta'(\theta)}{\eta(\theta)},
\]
and \(D(\theta)\) depends only on \(\theta\).

In the near-diagonal case we write $m+r=\beta m$, $\beta > 1$. Then
\[
    \eta(\theta) =
    \big(1-\beta^{-\frac{\theta}{1-\theta}}\big)m^{-\frac{\theta}{1-\theta}},
\] so
\[
    \log \eta(\theta) =
    \log\big(1-\beta^{-\frac{\theta}{1-\theta}}\big)
    - \frac{\theta}{1-\theta}\log m.
\]
A direct computation gives
\[
    \frac{\eta'(\theta)}{\eta(\theta)}
    = -\frac{1}{(1-\theta)^2}\log m
    + \frac{1}{(1-\theta)^2}
    \frac{\beta^{-\frac{\theta}{1-\theta}}\log \beta}
    {1-\beta^{-\frac{\theta}{1-\theta}}}.
\]
Therefore
\[
    F_1(\ell,\eta,\theta)
    = D(\theta)
    -\frac{1}{\theta^2}\log \ell
    +\frac{1}{\theta^2}\log\big(1-\beta^{-\frac{\theta}{1-\theta}}\big)
    -\frac{1}{\theta(1-\theta)}
    \frac{\beta^{-\frac{\theta}{1-\theta}}\log\beta}
    {1-\beta^{-\frac{\theta}{1-\theta}}},
\]
and hence
\begin{equation} \label{F_typical_size}
    F_1(\ell,\eta,\theta)\asymp -\log \ell
\end{equation}
for all sufficiently large \(\ell\). The same conclusion holds in the off-diagonal regime as well. For the higher derivatives, we do not need explicit formulas; it is enough to note that for each fixed \(k\ge 2\), the corresponding factor \(F_k(\ell,\eta,\theta)\) is
$\ll (\log(KY_1Y_2R))^k$.

We now estimate the short integrals exactly as in Section~\ref{sec_main_pf_large_theta}. Let
\[
    \Phi_j(u):=E_0(\theta_i+u), \qquad
    0\le u\le \frac{\eps_0}{H_j},
    \qquad j=1,2.
\]
Then
\[
    \Phi_j'(u)=E_1(\theta_i+u), \qquad
    \Phi_j''(u)=E_2(\theta_i+u).
\]
The natural sizes of \(E_1\) are
\[
    G_1:=RY_1\log(RY_1K^{-1}), \qquad
    G_2:=K Y_1^{\theta_i}\log Y_1.
\]
Moreover,
\[
    E_2(\theta)\ll G_j(\log N)
\]
uniformly in \([\theta_i,\theta_i+\eps_0/H_j]\), and by the definition of \(H_j\) this implies
\[
    |\Phi_j'(u)-E_1(\theta_i)| \ll
    \frac{\eps_0}{H_j}G_j(\log N).
\]
Thus, exactly as in the previous section, we obtain
\[
    \bigg| \int_0^{\eps_0/H_j} e(\Phi_j(u))\,du \bigg| \ll
    \min\Big(\frac{1}{H_j},\frac{1}{|E_1(\theta_i)|}\Big).
\]
Consequently, from~\eqref{Int_1_eval} and~\eqref{Int_2_eval},
\[
    I_j^{(i)} \ll W_j
    \sum_{\ell_1,\ell_2}
    \sum_{\eta_1,\eta_2 \in \mathcal{N}}
    \min\Big(\frac{1}{H_j},\frac{1}{|E_1(\theta_i)|}\Big),
\]
where
\[
    W_1=\frac{Y_1^{3-2\theta_i}}{N^4R}, \qquad
    W_2=\frac{Y_1^{2-2\theta_i}Y_2^{2-\theta_i}}{N^4K},
\] and $\mathcal{N}$ is the set of the form
\[
    \mathcal{N} = \Big\{ m^{-\frac{\theta}{1-\theta}} - (m+r)^{-\frac{\theta}{1-\theta}}: \ m \asymp KY_2^{\theta_i-1}, r \asymp R \Big\}
\] or 
\[
    \mathcal{N} = \Big\{ m_1^{-\frac{\theta}{1-\theta}} - m_2^{-\frac{\theta}{1-\theta}}: \ m_1 \asymp KY_1^{\theta_i-1}, m_2 \asymp KY_2^{\theta_i-1} \Big\}.
\]

We now split the sums dyadically according to the size of $|E_1(\theta_i)|$:
\[
    I_j^{(i)} \le
    \sum_{\substack{H_j^{-1}\le \Delta\le 1\\ \Delta\text{ dyadic}}}
    I_j^{(i,\Delta)},
\]
where
\[
    I_j^{(i,\Delta)} := W_j
    \sum_{\substack{\ell_1,\ell_2\\ \eta_1,\eta_2\\ |E_1(\theta_i)|\asymp \Delta G_j}} \min\Big(\frac{1}{H_j},\frac{1}{|E_1(\theta_i)|}\Big),
\]
and in the last block, corresponding to \(\Delta\asymp H_j^{-1}\), the condition \(\asymp\) is replaced by \(\le\).

For the near-diagonal contribution we therefore get
\begin{multline*}
    I_1^{(i,\Delta)} \ll
    \frac{Y_1^{3-2\theta_i}}{N^4R}
    \#\Big\{
    \ell_1,\ell_2\asymp RY_1K^{-1},\ m_1,m_2\asymp K Y_1^{\theta_i-1},\ r_1,r_2\asymp R: \\
    |E_1(\theta_i)|\le \Delta RY_1\log(RY_1K^{-1})
    \Big\} \cdot
    \min\Big(\frac{1}{H_1},\frac{1}{\Delta RY_1\log(RY_1K^{-1})}\Big).
\end{multline*}
Note that \(r_j\le 100m_j\) for \(j=1,2\) in the near-diagonal regime due to the restrictions on $Y_1, Y_2, R$. Applying Proposition~\ref{count_small_theta_diag}, we obtain
\begin{multline*}
    I_1^{(i,\Delta)}
    \ll \frac{Y_1^{3-2\theta_i}}{N^4R}
    (K Y_1 R)^{\eps_3} \Big(
        K^2Y_1^{2\theta_i-2}R^2
        + K^{\frac43}Y_1^{\frac{8\theta_i}{3}-\frac43}R^2
        + \Delta K^2Y_1^{4\theta_i-2}R^2
    \Big) \cdot \\
    \min\Big(\frac{1}{H_1},\frac{1}{\Delta RY_1\log(RY_1K^{-1})}\Big) =: T_1+T_2+T_3.
\end{multline*}

For the first term, we use the bound \(H_1^{-1}\) in the minimum:
\[
    T_1 \ll
    \frac{K^2Y_1R}{N^4}(K Y_1 R)^{\eps_3}\frac{1}{H_1} \ll \frac{K^3Y_1^{\theta_i}}{N^4}(K^2Y_1^{\theta_i})^{\eps_3}\frac{1}{H_1}
    \ll N^{3+3\eps_1+\frac35+3\eps_3-4}\frac{1}{H_1},
\]
which is sufficient.

Similarly,
\[
    T_2 \ll
    \frac{K^{\frac{4}{3}}Y_1^{\frac53+\frac{2\theta_i}{3}}R}{N^4}(K Y_1 R)^{\eps_3}\frac{1}{H_1} \ll \frac{K^{\frac{7}{3}}Y_1^{\frac23+\frac{5\theta_i}{3}}}{N^4}(K^2Y_1^{\theta_i})^{\eps_3}\frac{1}{H_1} \ll    N^{\frac73+\frac73\eps_1+\frac23+\frac{5\theta_B}{3}+3\eps_3-4}\frac{1}{H_1},
\]
which is acceptable provided $\theta_B<\frac{3}{5}-2\eps_1-2\eps_3 - \delta$.

For the third term, we use the second bound in the minimum:
\[
    T_3 \ll
    \frac{Y_1^{3-2\theta_i}}{N^4R}
    (K Y_1 R)^{\eps_3}
    \cdot \Delta K^2Y_1^{4\theta_i-2}R^2
    \cdot \frac{1}{\Delta RY_1\log(RY_1K^{-1})}
    \ll \frac{K^2Y_1^{2\theta_i}}{N^4}(K Y_1 R)^{\eps_3}.
\]
Multiplying by \(H_1\), we obtain
\[
    H_1 T_3 \ll (RY_1)^{\frac{1}{2}+\eps_2}
    \frac{K^2 Y_1^{2\theta_i}}{N^4}
    (K Y_1 R)^{\eps_3} \ll
    N^{\frac52+\frac52\eps_1+2\eps_2+\frac{5\theta_B}{2}-4+3\eps_3},
\]
which is \(O(N^{-\delta})\) provided $   \theta_B<\frac{3}{5}-2(\eps_1+\eps_2+\eps_3) - \delta$.

For the off-diagonal contribution, we similarly obtain
\begin{multline*}
    I_2^{(i,\Delta)} \ll
    \frac{Y_1^{2-2\theta_i}Y_2^{2-\theta_i}}{N^4K}
    \#\Big\{
    \ell_1,\ell_2\asymp Y_1^{\theta_i},\
    m_1,m_3\asymp K Y_1^{\theta_i-1},\
    m_2,m_4\asymp K Y_2^{\theta_i-1}: \\
    |E_1(\theta_i)|\le \Delta K Y_1^{\theta_i}\log(KY_1Y_2)
    \Big\} \cdot
    \min\Big(\frac{1}{H_2},\frac{1}{\Delta K Y_1^{\theta_i}\log(KY_1Y_2)}\Big).
\end{multline*}
Applying Proposition~\ref{count_small_theta_nondiag}, we get
\begin{multline*}
    I_2^{(i,\Delta)} \ll
    \frac{Y_1^{2-2\theta_i}Y_2^{2-\theta_i}}{N^4K}
    (K Y_1 Y_2)^{\eps_3}
    \Big( K^4Y_1^{2\theta_i-2}Y_2^{2\theta_i-2}
        + K^{\frac{10}{3}}Y_1^{\frac{8\theta_i}{3}-\frac43}Y_2^{2\theta_i-2} +
        \\ \Delta K^4Y_1^{4\theta_i-2}Y_2^{2\theta_i-2} \Big) \cdot
    \min\Big(\frac{1}{H_2},\frac{1}{\Delta K Y_1^{\theta_i}\log(KY_1Y_2)}\Big)
    =: T_4+T_5+T_6.
\end{multline*}

The first term is
\[
    T_4 \ll \frac{K^3Y_2^{\theta_i}}{N^4}(K Y_1 Y_2)^{\eps_3}\frac{1}{H_2}
    \ll N^{3+3\eps_1+\frac35+3\eps_3-4}\frac{1}{H_2},
\]
which is sufficient.

Similarly,
\[
    T_5 \ll \frac{K^{7/3}Y_1^{\frac23+\frac{2\theta_i}{3}}Y_2^{\theta_i}}{N^4}
    (K Y_1 Y_2)^{\eps_3}\frac{1}{H_2} \ll
    N^{\frac73+\frac73\eps_1+\frac23+\frac{5\theta_B}{3}+3\eps_3-4}\frac{1}{H_2},
\]
which gives the same condition as that for \(T_2\) (with $H_1$ replaced by $H_2$).

Finally,
\[
    T_6 \ll
    \frac{Y_1^{2-2\theta_i}Y_2^{2-\theta_i}}{N^4K}
    (K Y_1 Y_2)^{\eps_3} \cdot
    \Delta K^4Y_1^{4\theta_i-2}Y_2^{2\theta_i-2}
    \cdot \frac{1}{\Delta K Y_1^{\theta_i}\log(KY_1Y_2)}
    \ll \frac{K^2Y_1^{\theta_i}Y_2^{\theta_i}}{N^4}(K Y_1 Y_2)^{\eps_3}.
\]
Multiplying by \(H_2\), we get
\[
    H_2 T_6 \ll (K Y_1^{\theta_i})^{\frac{1}{2}+\eps_2}
    \frac{K^2Y_1^{\theta_i}Y_2^{\theta_i}}{N^4}
    (K Y_1 Y_2)^{\eps_3} \ll
    N^{\frac52+\frac52\eps_1+2\eps_2+\frac{5\theta_B}{2}-4+3\eps_3},
\]
which is exactly the same bound as for \(T_3\).

Thus, all contributions are acceptable, provided \(\theta_B<\frac35\) and the auxiliary parameters \(\eps_1,\eps_2,\eps_3\) are chosen sufficiently small with respect to $\frac{3}{5} - \theta_B$. This completes the proof.

\section{Counting for large $\theta$} \label{sec_counting_large}

In this section, we prove two counting estimates used in the proof of Theorem~\ref{main_thm} in the case $\theta>3$.

\begin{proposition} \label{count_prop_large_theta}
    Let $K,Y,R\ge 1$ be real numbers such that $R\le Y$, let $0<\Delta\le 1$, and let $\theta>3$ be fixed. Set
    $\kappa:=0.001$. Then, for fixed $0<c<1$ and every $\eps>0$, one has
    \begin{multline*}
        J_{K,Y,R,\Delta} :=
        \#\Big\{ k_1,k_2\asymp K,\ n_1,n_2\asymp Y,\ r_1,r_2\asymp R,\ r_i<cn_i : \\
        \Big|
        k_1 \big(n_1^\theta\log n_1-(n_1-r_1)^\theta\log(n_1-r_1)\big)
        - k_2\big(n_2^\theta\log n_2-(n_2-r_2)^\theta\log(n_2-r_2)\big)
        \Big| \le \\
        \tilde c \Delta K Y^{\theta-1}R\log Y
        \Big\} \ll (KY)^{\eps}
        \Big( K^2Y^{2-\kappa} +
            \Delta K^2R^2Y^2 +
            KR^{\frac{1}{2}}Y^{\frac52-\kappa} +
            \Delta K R^{\frac{1}{2}}Y^{\frac92-\kappa} \Big)
    \end{multline*}
for any fixed $\tilde c>0$, where the implied constant may depend on $\theta,c,\tilde c$, and~$\eps$.
\end{proposition}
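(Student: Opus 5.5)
We follow the Fourier-analytic strategy sketched in Section~2, after Aistleitner, El-Baz and Munsch. Write $g(n,r):=n^\theta\log n-(n-r)^\theta\log(n-r)$. For $r<cn$ and $n\asymp Y$, $r\asymp R$ the mean-value theorem gives $g(n,r)\asymp Y^{\theta-1}R\log Y$. The defining inequality then says
\[
\Big|\log\frac{k_1 g(n_1,r_1)}{k_2 g(n_2,r_2)}\Big|\ll\Delta .
\]
We majorize the indicator of $[-C\Delta,C\Delta]$ by a nonnegative function whose Fourier transform is supported in $[-1/\Delta,1/\Delta]$. This gives
\[
J_{K,Y,R,\Delta}\ll \Delta\int_{-1/\Delta}^{1/\Delta}\Big|\sum_{k\asymp K}k^{it}\Big|^2\,|G(t)|^2\,dt,
\]
where $G(t):=\sum_{n\asymp Y}\sum_{r\asymp R,\,r<cn} e\big(\tfrac{t}{2\pi}\log g(n,r)\big)$. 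By symmetry it suffices to integrate over $0\le t\le 1/\Delta$, which we split into $[0,1]$, $[1,T_0]$ and dyadic blocks $[T,2T]$ with $T_0\le T\le 1/\Delta$.

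\textbf{Small $t$.} On $[0,1]$ the trivial bound gives $\Delta K^2Y^2R^2$.

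\textbf{Moderate $t$.} For $1\le t\ll Y$ we apply Lemma~\ref{Kusmin_Landau} in the variable $n$, with $r$ fixed. The derivative $\partial_n \log g(n,r)\asymp 1/Y$ is monotone, so the $n$-sum is $\ll Y/t$ provided $t/Y$ is small. Hence $|G(t)|\ll RY/t$, and using $\int|\sum k^{it}|^2\ll TK+K^2$ on dyadic blocks the contribution is $\ll\Delta K^2R^2Y^2$.

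\textbf{Large $t$.} For $T\gg Y$ we bound $|G|^2$ in two ways.
\begin{itemize}
\item For the near-diagonal part we use Cauchy--Schwarz in $r$: $|G(t)|^2\le R\sum_r|\sum_n\cdots|^2$. For fixed $r$, the inner sum in $n$ behaves like an $n$-sum with phase $t\phi_2$, where $\phi_2$ is as in Lemma~\ref{exp_pairs}.
\item Alternatively, as in Section~2, we use Hölder with exponents $(4,4/3)$ in the pattern of \eqref{after_holder_sketch}. The $k$-factor gets the eighth moment, Lemma~\ref{MVT_zeta} with $A=4$, giving $(K^4T^{3/2+\eps})^{1/4}$. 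A $\max_t$ factor gets Lemma~\ref{exp_pairs} (Bourgain's pair) applied to the $n$-sum for each $r$, giving $R\,Y^{1/2}T^{13/84}$. A fourth-moment factor in $n$ gets Corollary~\ref{energy_bound}. Here one first reduces $|G|^2$ with $r$ fixed to a fourth moment of $\sum_n e(t\phi_1(n))$, via the identity expressing $g(n,r)$ through differences of $n^\theta\log n$, after one more Cauchy--Schwarz.
\end{itemize}
Collecting powers yields the terms $\Delta K^2R^2Y^2$ and $\Delta KR^{1/2}Y^{9/2-\kappa}$. The latter is the loss from Cauchy--Schwarz in $r$ combined with the energy bound $TY^{2.45}$, giving about $Y^{9/2}$ with a small saving $\kappa=0.001$ coming from the exponent arithmetic ($5.993<6$ in the sketch).

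\textbf{Very large $t$.} When $1/\Delta$ exceeds the threshold at which these estimates stop improving, roughly $T\approx Y^{2}$, we use monotonicity of $J$ in $\Delta$. This replaces $\Delta$ by that threshold value and produces the $\Delta$-free terms $K^2Y^{2-\kappa}$ and $KR^{1/2}Y^{5/2-\kappa}$.

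The main obstacle is the large-$t$ regime. One must make sure that the reduction of the two-variable sum $G$ to quantities controlled by Corollary~\ref{energy_bound} and Lemma~\ref{exp_pairs} loses at most $R^{1/2}$ relative to the heuristic. One must also check that the exponents from Lemma~\ref{MVT_zeta}, Bourgain's pair and Corollary~\ref{energy_bound} combine to a genuine saving $Y^{-\kappa}$ uniformly over all dyadic $T$ up to about $Y^{2+\eps}$, including the transition where $T$ is comparable to $K$. If the Bourgain bound is insufficient in some subrange of $T$, we would try the Huxley bound from Lemma~\ref{exp_pairs} there instead.
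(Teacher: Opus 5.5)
Your setup matches the paper's: the Fej\'er majorant, the trivial and Kusmin--Landau ranges, H\"older into $\Delta S_1^{1/4}S_2^{3/4}$ with $S_2\le\max_t|G|^{2/3}\int_T^{2T}|G|^2$, Bourgain's pair with the $r$-sum trivial, and monotonicity at $\widetilde\Delta=Y^{-2-\kappa}$. The gap is in the step that carries the proof, the bound for $S_4=\int_T^{2T}|G(t)|^2\,dt$, and your mechanism for it fails.

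Once you fix $r$ (after Cauchy--Schwarz in $r$), the count becomes essentially diagonal. Since $\partial_n\log g(n,r)\asymp 1/Y$, one has $\int_T^{2T}|G_r|^2\ll T\cdot\#\{n_1,n_2:|\log g(n_1,r)-\log g(n_2,r)|\le 1/T\}\ll TY+Y^2$. Hence $S_4\ll R^2(TY+Y^2)$, and inserting this gives $I^{(T)}\ll\Delta KR^2Y\,T^{101/84}$. At $T\approx Y^2$ this is roughly $\Delta KR^2Y^{3.405}$, which exceeds $\Delta KR^{1/2}Y^{9/2-\kappa}$ once $R>Y^{0.73}$.

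Bounding each fixed-$r$ count by the full four-variable energy instead loses exactly the factor $R$ from Cauchy--Schwarz. That gives $S_4\ll R(RY^3+TY^{2.45})$, which is worse still when $R\approx Y$. There is also no identity turning $|G_r|^2$ into a fourth moment of $\sum_n e(t\phi_1(n))$. The phase $\log(\psi(n)-\psi(n-r))$, with $\psi(x)=x^\theta\log x$, is not additive in $\psi$, so the connection can only go through counting.

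The missing idea is to keep the $r$-sum inside $S_4$. The Fej\'er-kernel argument gives $S_4\ll T\cdot\#\{(n_1,r_1,n_2,r_2):|g(n_1,r_1)-g(n_2,r_2)|\ll T^{-1}Y^{\theta-1}R\log Y\}$. Because $r<cn$, the map $(n,r)\mapsto(n,n-r)$ is injective into pairs with both entries $\asymp Y$. So this count embeds, with no loss in $R$, into $\#\{n_1,\dots,n_4\asymp Y:|\psi(n_1)-\psi(n_2)-\psi(n_3)+\psi(n_4)|\ll T^{-1}Y^{\theta-1}R\log Y\}$.

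After normalising by $Y^\theta\log Y$ the tolerance is $R/(TY)$. Lemma~1 of Robert and Sargos then bounds $S_4$ by $\frac RY\int_0^{bTY/R}\big|\sum_{n\asymp Y}e\big(v\frac{n^\theta\log n}{Y^\theta\log Y}\big)\big|^4dv$. Corollary~\ref{energy_bound} gives $S_4\ll RY^3+TY^{2.45}\ll Y^{4.45+\kappa}$.

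The factor $R^{1/2}$ in the final term then comes only from the trivial $r$-sum in $S_3$, namely $R^{\frac23\cdot\frac34}$, not from any Cauchy--Schwarz in $r$. With your accounting the exponents do not close.

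Separately, for $Y\ll T<K$ you cannot use Lemma~\ref{MVT_zeta}, since it requires $T\ge K$. The paper uses Kusmin--Landau in $k$ there. The mean value bound $\int_T^{2T}|\sum_k k^{it}|^2\ll(T+K)K$, with $G$ estimated trivially, would also suffice.
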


\begin{proof}
    We distinguish two cases according to the size of $\Delta$.

    \vspace{1ex}
    
    \textit{Case 1. $\Delta \ge Y^{-2-\kappa}$.} 
    Write
    \[
        f(n,r):=n^\theta \log n-(n-r)^\theta\log(n-r).
    \]
    Since $n\asymp Y$, $r\asymp R$, and $r<cn$, the mean-value theorem gives
    \[
        f(n,r)\asymp Y^{\theta-1}R\log Y.
    \]
    Therefore, if
    \[
        \big|k_1 f(n_1,r_1)-k_2 f(n_2,r_2)\big|
        \le \tilde c \Delta K Y^{\theta-1}R\log Y,
    \]
    then
    \[
        \bigg|
        \frac{k_1 f(n_1,r_1)}{k_2 f(n_2,r_2)}-1
        \bigg| \le C_1\Delta
    \]
    for some sufficiently large constant \(C_1=C_1(\theta)>0\). Hence, for
    \[
        \mathcal R:=\frac{k_1 f(n_1,r_1)}{k_2 f(n_2,r_2)},
    \] we have $|\log \mathcal R|\le C_2\Delta$
    with another constant \(C_2=C_2(\theta)>0\).

    Next, recall that the Fourier transform of $(\frac{\sin \pi x}{\pi x})^2$ is
    \(\max(0,1-|v|)\). By Fourier inversion,
    \begin{multline*}
        J_{K,Y,R,\Delta} \ll
        \sum_{k_1,k_2\asymp K}
        \sum_{n_1,n_2\asymp Y}
        \sum_{\substack{r_1,r_2\asymp R\\ r_i<c n_i}}
        \Big( \frac{\sin\big(\pi (\log \mathcal R)(2C_2\Delta)^{-1}\big)}
        {\pi (\log \mathcal R)(2C_2\Delta)^{-1}}
        \Big)^2 \\
        \ll \int_{-1}^1 \max(0,1-|v|)
        \bigg| \sum_{k\asymp K} \sum_{n\asymp Y}
        \sum_{\substack{r\asymp R\\ r<cn}}
        e\Big( v\frac{\log k+\log f(n,r)}{C_2\Delta}
        \Big) \bigg|^2 dv.
    \end{multline*}
    Using positivity of the weight and symmetry, we get
    \[
        J_{K,Y,R,\Delta} \ll
        \int_0^1 \bigg|
        \sum_{k\asymp K} e\Big(v\frac{\log k}{C_2\Delta}\Big) \bigg|^2 \bigg|
        \sum_{n\asymp Y}\sum_{\substack{r\asymp R\\ r<cn}} 
        e\Big(v\frac{\log f(n,r)}{C_2\Delta}\Big)
        \bigg|^2 dv.
    \]
    After the substitution \(u=v/(C_2\Delta)\), this becomes
    \[
        J_{K,Y,R,\Delta} \ll I_{K,Y,R,\Delta},
    \]
    where
    \[
        I_{K,Y,R,\Delta} := \Delta
        \int_0^{\Delta^{-1}}
        \bigg| \sum_{k\asymp K} e(u\log k)
        \bigg|^2 \bigg|
        \sum_{n\asymp Y}\sum_{\substack{r\asymp R\\ r<cn}} e(u\log f(n,r)) \bigg|^2 du.
    \]

    We split this integral dyadically as follows:
    \[
        I_{K,Y,R,\Delta}^{(0)} :=
        \Delta\int_0^1 |\ldots|^2 |\ldots|^2\,du,
        \qquad I_{K,Y,R,\Delta}^{(T)} :=
        \Delta\int_T^{2T} |\ldots|^2 |\ldots|^2\,du,
    \]
    where \(T=1,2,4,\ldots\), and \(T\le \Delta^{-1}\le Y^{2+\kappa}\) (the last integral may in general run from $T$ to $(1+c')T$ for some $c'>0$).

    The contribution of \(I_{K,Y,R,\Delta}^{(0)}\) is bounded trivially:
    \begin{equation} \label{bound_1_trivial_large}
        I_{K,Y,R,\Delta}^{(0)} \ll \Delta K^2R^2Y^2.
    \end{equation}

    Next, assume that \(1\le T\le c_0 Y\) with sufficiently small $c_0 > 0$. Then Lemma~\ref{Kusmin_Landau} applies to the \(n\)-sum, uniformly in \(r\), since
    \[
        \frac{d}{dn}\log f(n,r)\asymp \frac{1}{Y},
    \] and this derivative is monotone. Indeed, for the latter condition, it is enough to show that the function $f(x,r)$ is log-concave for fixed $r$. It is therefore sufficient to show that 
    \[
        g(x) := \frac{d}{dx} x^{\theta} \log x \qquad \text{is log-concave.}
    \] This follows from
    \[
        \frac{d^2}{dx^2} \log g(x) = \frac{g(x) g''(x) - g'(x)^2}{g(x)^2} < 0 \qquad \text{for all sufficiently large $x$},
    \] which can be verified directly.
    Thus
    \[
        \sum_{r\asymp R}\sum_{\substack{n\asymp Y\\ r<cn}} e(u\log f(n,r))
        \ll R\cdot \frac{Y}{T},
    \]
    and therefore
    \begin{equation} \label{bound_2_KL_large}
        \Delta
        \sum_{\substack{T\text{ dyadic}\\ 1\le T\le c_0 Y}} \int_T^{2T}
        K^2 \cdot R^2 \Big(\frac{Y}{T}\Big)^2\,du
        \ll \Delta K^2R^2Y^2.
    \end{equation}

    Similarly, if \(1\le T\le c_0 K\), then Lemma~\ref{Kusmin_Landau} applies to the \(k\)-sum, and we obtain
    \begin{equation} \label{bound_2_KL_dop_large}
        \Delta \sum_{\substack{T\text{ dyadic}\\ 1\le T\le c_0 K}}
        \int_T^{2T}
        \Big(\frac{K}{T}\Big)^2 R^2Y^2\,du
        \ll \Delta K^2R^2Y^2.
    \end{equation}

    It remains to consider the range in which
    \[
        T>c_0 \max\big(K,\,Y\big).
    \]
    In this range we apply Hölder's inequality:
    \begin{multline*}
        I_{K,Y,R,\Delta}^{(T)}
        \le \Delta \bigg( \int_T^{2T}
        \Big| \sum_{k\asymp K} e(u\log k) \Big|^8 du \bigg)^{1/4} \cdot
        \\
        \bigg( \int_T^{2T} \Big|
        \sum_{n\asymp Y}\sum_{\substack{r\asymp R\\ r<cn}} e(u\log f(n,r))
        \Big|^{8/3} du \bigg)^{3/4}
        =:\Delta\, S_1^{\frac14} S_2^{\frac34},
    \end{multline*} where $S_1$ and $S_2$ denote the corresponding integrals.

    By Lemma~\ref{MVT_zeta},
    \begin{equation} \label{S_1_bound_large}
        S_1 \ll K^{4+\eps}T^{\frac32+\eps}.
    \end{equation}

    Next, write
    \begin{equation} \label{S_2_decomposition_large}
        S_2 \le \bigg(
        \max_{T\le u\le 2T} \Big|
        \sum_{r\asymp R}\sum_{\substack{n\asymp Y\\ r<cn}} e(u\log f(n,r))
        \Big|^{2/3} \bigg) \int_T^{2T} \Big|
        \sum_{r\asymp R}\sum_{\substack{n\asymp Y\\ r<cn}} e(u\log f(n,r)) \Big|^2 du
        =: S_3 S_4.
    \end{equation}

    To estimate \(S_3\), we apply Lemma~\ref{exp_pairs} with the second bound, and then estimate the sum over $r$ trivially. This gives
    \begin{equation} \label{S_3_bound_large}
        S_3 \ll \big(R\cdot Y^{\frac12+\eps}T^{\frac{13}{84}+\eps}\big)^{\frac{2}{3}}
        \ll R^{\frac23} Y^{\frac13} T^{\frac{13}{126}+\eps}
        \ll R^{\frac23} Y^{\frac{34}{63}+\frac{13\kappa}{126}+\eps},
    \end{equation} since \(T\le \Delta^{-1}\le Y^{2+\kappa}\).

    We now estimate \(S_4\). Expanding the square, we get
    \[
        S_4 = \int_T^{2T}
        \sum_{\substack{n_1,n_2\asymp Y\\ r_1,r_2\asymp R\\ r_i<n_i}}
        e\Big(u\log \frac{f(n_1,r_1)}{f(n_2,r_2)}\Big)\,du.
    \]
    As at the beginning of the proof, the Fejér kernel argument gives
    \[
        S_4 \ll T\, \#\Big\{
        n_1,n_2\asymp Y,\ r_1,r_2\asymp R,\ r_i<cn_i : \Big| \log \frac{f(n_1,r_1)}{f(n_2,r_2)} \Big| \le \frac{1}{T} \Big\}.
    \]
    Since \(f(n,r)\asymp Y^{\theta-1}R\log Y\), this implies
    \[
        S_4 \ll T\, \#\Big\{
        n_1,n_2\asymp Y,\ r_1,r_2\asymp R,\ r_i<cn_i : |f(n_1,r_1)-f(n_2,r_2)| \ll
        \frac{1}{T}Y^{\theta-1}R\log Y \Big\}.
    \]
    Enlarging the set symmetrically, we bound this by 
    \[
        T\, \#\Big\{
        n_1,n_2,n_3,n_4\asymp Y :
        \big| n_1^\theta\log n_1-n_2^\theta\log n_2 - n_3^\theta\log n_3 + n_4^\theta\log n_4
        \big| \ll
        \frac{1}{T}Y^{\theta-1}R\log Y \Big\}.
    \] This is valid because of the initial restriction $r_i < cn_i$ (and this is a crucial difference with the off-diagonal counting problem). By~\cite[Lemma~1]{Robert-Sargos}, this is bounded by
    \[
        \frac{R}{Y}\int_0^{bTY/R}
        \Big|
        \sum_{n\asymp Y} e\Big(v\frac{n^\theta\log n}{Y^\theta\log Y}\Big)
        \Big|^4 dv
    \]
    for some absolute constant \(b>0\). Then, by Corollary~\ref{energy_bound}, we have
    \begin{equation} \label{S_4_bound_large}
        S_4 \ll RY^3 + TY^{2.45} \ll Y^{4.45+\kappa},
    \end{equation}
    since \(R\le Y\) and \(T\le Y^{2+\kappa}\).

    Combining~\eqref{S_1_bound_large}, \eqref{S_2_decomposition_large}, \eqref{S_3_bound_large}, and~\eqref{S_4_bound_large}, we obtain
    \begin{multline} \label{last_ineq}
        I_{K,Y,R,\Delta}^{(T)}
        \ll \Delta K^{1+\eps}
        T^{\frac38+\eps} \cdot
        R^{\frac12} Y^{\frac{17}{42}+\frac{13\kappa}{168}+\eps} \cdot
        Y^{\frac{267}{80}+\frac{3\kappa}{4}} \\
        \ll \Delta (KY)^{\eps} K R^{\frac12}
        Y^{\frac{7547}{1680}+\frac{101\kappa}{84}}
        \ll \Delta (KY)^{\eps}
        K R^{\frac12} Y^{\frac92-\kappa},
    \end{multline}
    by our choice of \(\kappa\).

    Summing over dyadic \(T\) and combining the result with
    \eqref{bound_1_trivial_large}, \eqref{bound_2_KL_large}, \eqref{bound_2_KL_dop_large}, and~\eqref{last_ineq}, we conclude that
    \begin{equation} \label{first_case_final_large}
        J_{K,Y,R,\Delta} \ll I_{K,Y,R,\Delta}
        \ll \Delta K^2R^2Y^2 +
        \Delta (KY)^{\eps} K R^{\frac12} Y^{\frac92-\kappa}.
    \end{equation}

    \vspace{2ex}

    \textit{Case 2. $\Delta < Y^{-2-\kappa}$.}
    Let $\tilde \Delta:=Y^{-2-\kappa}$. By monotonicity,
    \[
        J_{K,Y,R,\Delta}\le J_{K,Y,R,\tilde \Delta}.
    \]
    Applying~\eqref{first_case_final_large} with \(\Delta=\tilde \Delta\), we get
    \begin{equation} \label{second_case_final_large}
        J_{K,Y,R,\Delta} \ll
        (KY)^{\eps} \Big(
            \tilde \Delta K^2 R^2 Y^2+
            \tilde \Delta K R^{\frac12} Y^{\frac92-\kappa} \Big) \ll (KY)^{\eps}
        \Big( K^2 Y^{2-\kappa} +
            K R^{\frac12}Y^{\frac52-\kappa}
        \Big),
    \end{equation}
    since \(R\le Y\). Combining \eqref{first_case_final_large} and \eqref{second_case_final_large} completes the proof.
\end{proof}

\vspace{2ex}

\begin{proposition} \label{count_prop_large_theta_nondiag}
    Let $K,Y_1,Y_2\ge 1$ be real numbers such that $Y_2 \ge Y_1$, let $0<\Delta\le 1$, and let $3 < \theta<7$ be fixed. Set $\kappa:=0.001$. Then, for fixed $0<c<1$ and every $\eps>0$, one has
    \begin{multline*}
        J_{K,Y_1,Y_2,\Delta} :=
        \#\Big\{
        k_1,k_2\asymp K,\ n_1,n_3\asymp Y_1,\ n_2,n_4\asymp Y_2,\ n_1 < cn_2, n_3 < cn_4: \\
        \Big|
        k_1\Big(n_2^\theta\log n_2-n_1^\theta\log n_1\Big)
        - k_2\Big(n_4^\theta\log n_4-n_3^\theta\log n_3\Big)
        \Big| \le
        \tilde c \Delta K Y_2^{\theta} \log Y_2
        \Big\} \ll \\ (KY_2)^{\eps}
        \Big( K^2 Y_2^{2-\kappa} + \Delta K^2 Y_2^4 + KY_2^{3-\kappa} + \Delta K Y_2^{5-\kappa} \Big)
    \end{multline*} for any fixed $\tilde c>0$, where the implied constant may depend on $\theta,c,\tilde c$, and $\eps$.
\end{proposition}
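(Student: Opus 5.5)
We follow the proof of Proposition~\ref{count_prop_large_theta} step by step. The main change is that the role of $R$ is now played by $Y_1$, and the natural size of the phase is $g(n_1,n_2):=n_2^\theta\log n_2-n_1^\theta\log n_1\asymp Y_2^\theta\log Y_2$ rather than $Y^{\theta-1}R\log Y$; this size holds because $n_1<cn_2$.

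\textbf{Case 1: $\Delta\ge Y_2^{-2-\kappa}$.} First convert the condition into $|\log(k_1g(n_1,n_2))-\log(k_2 g(n_3,n_4))|\le C_2\Delta$. The Fej\'er-kernel argument then gives
\[
J\ll \Delta\int_0^{\Delta^{-1}}\Big|\sum_{k\asymp K}k^{iu}\Big|^2\Big|\sum_{n_1\asymp Y_1}\sum_{\substack{n_2\asymp Y_2\\ n_1<cn_2}}e(u\log g(n_1,n_2))\Big|^2du .
\]
We split the $u$-range as before.
\begin{itemize}
\item The range $u\le1$ is bounded trivially by $\Delta K^2Y_1^2Y_2^2\le\Delta K^2Y_2^4$.
\item For $1\le T\le c_0Y_2$ we apply Kusmin--Landau in $n_2$. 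For fixed $n_1$ we have $\partial_{n_2}\log g\asymp 1/Y_2$, and it is monotone by log-concavity of $x^\theta\log x-\mathrm{const}$ for large $x$. This gives the bound $\Delta K^2Y_1^2Y_2^2$.
\item For $1\le T\le c_0K$ we apply Kusmin--Landau in $k$, with the same outcome.
\end{itemize}

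For $T>c_0\max(K,Y_2)$ we use H\"older exactly as before, which gives $\Delta S_1^{1/4}S_3^{3/4}S_4^{3/4}$.
\begin{itemize}
\item By Lemma~\ref{MVT_zeta}, $S_1\ll K^{4+\eps}T^{3/2+\eps}$.
\item For $S_3$ we fix $n_1$, apply the second bound of Lemma~\ref{exp_pairs} to the $n_2$-sum, and sum trivially over $n_1$. This gives $S_3\ll (Y_1Y_2^{1/2}T^{13/84})^{2/3}$. Here one must check that the exponent-pair machinery applies to $\log(x^\theta\log x-a)$ with $a=n_1^\theta\log n_1\le c^\theta x^\theta\log x$, uniformly in $a$; I expect this to be as routine as the $r$-uniformity in Lemma~\ref{exp_pairs}.
\end{itemize}

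The \textbf{main obstacle} is $S_4$. As noted in the earlier proof, the symmetric enlargement into a four-variable energy of $n^\theta\log n$ fails here, because $n_1$ and $n_2$ live on different scales. Instead, $S_4\ll T\cdot\#\{|g(n_1,n_2)-g(n_3,n_4)|\ll T^{-1}Y_2^\theta\log Y_2\}$ is exactly an unbalanced energy $|n_2^\theta\log n_2-n_4^\theta\log n_4-n_1^\theta\log n_1+n_3^\theta\log n_3|\le\delta$ with $n_2,n_4\asymp Y_2$ and $n_1,n_3\asymp Y_1$. By~\cite[Lemma~1]{Robert-Sargos} it is bounded by
\[
\frac{1}{T}\int_0^{bT}|A(v)|^2|B(v)|^2dv, \qquad N=Y_2,\ M=Y_1 ,
\]
and we invoke Lemma~\ref{unbalanced_energy_bound}. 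If $Y_1\le Y_2^{\kappa'}$ we use bound (i), which gives $S_4\ll Y_2^\eps(Y_1^2Y_2^2+Y_1^2Y_2T)$. Otherwise we use bound (ii), where the term $N^{2\theta}/M^{2\theta-2}$ and the Huxley term must be controlled. This is where the restriction $\theta<7$ enters: we need $M^{(374+356\theta)/641}N^{3.411-356\theta/641}\ll N^{4.45}$-type bounds, and the worst case $M=N$ requires the exponent arithmetic to close for $\theta<7$. After choosing the better of (i) and (ii) in each range of $Y_1$, the target is $S_4\ll Y_2^{4.45+\kappa}$ when $Y_1\asymp Y_2$, with a correspondingly smaller bound when $Y_1$ is small. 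The product $\Delta K T^{3/8}(Y_1^{2/3}Y_2^{1/3}T^{13/126})^{3/4}S_4^{3/4}$ with $T\le Y_2^{2+\kappa}$ should then be $\ll\Delta K Y_2^{5-\kappa}$. Because $Y_1\le Y_2$, we may simply set $R=Y_1\le Y_2$ in the numerics of \eqref{last_ineq}, gaining a further factor $Y_2^{1/2}$ of room. I expect the bookkeeping across the $Y_1$ ranges, especially the intermediate ones where neither (i) nor (ii) is clearly optimal, to be the delicate part.

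\textbf{Case 2: $\Delta<Y_2^{-2-\kappa}$.} By monotonicity we take $\tilde\Delta=Y_2^{-2-\kappa}$, which turns $\Delta K^2Y_2^4+\Delta KY_2^{5-\kappa}$ into $K^2Y_2^{2-\kappa}+KY_2^{3-\kappa}$, completing the bound.
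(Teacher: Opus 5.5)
Your architecture matches the paper's: the Fej\'er reduction, the trivial and Kusmin--Landau ranges, H\"older with $S_1,S_3,S_4$, $S_4$ through Lemma~\ref{unbalanced_energy_bound} with bound (i) for small $Y_1$ and bound (ii) for large $Y_1$, and monotonicity for Case~2. The gap is in the step you defer as bookkeeping, which is the only genuinely new content relative to Proposition~\ref{count_prop_large_theta}.

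\textbf{The uniform $S_4$ target is not available.} Your plan is to obtain $S_4\ll Y_2^{4.45+\kappa}$ and then set $R=Y_1$ in the numerics of \eqref{last_ineq}. Lemma~\ref{unbalanced_energy_bound} as stated does not give this bound uniformly in $Y_1$. Write $Y_1=Y_2^{a}$.
\begin{itemize}
\item Bound (i) gives only $S_4\ll Y_1^2Y_2^{3+\kappa}$. This exceeds $Y_2^{4.45+\kappa}$ as soon as $a>0.725$.
\item The first part of (ii) gives $Y_2^{2\theta-a(2\theta-2)}$. For $\theta$ near $7$ this exceeds $Y_2^{4.45}$ until $a\approx 0.796$.
\end{itemize}
So in precisely the intermediate range you flagged, neither bound reaches your target, and the appeal to \eqref{last_ineq} is unavailable there.

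\textbf{How the argument actually closes.} Keep the factor $Y_1^{1/2}$ coming from $S_3^{3/4}$, and bound the full product $\Delta K T^{3/8}Y_1^{1/2}Y_2^{17/42}S_4^{3/4}$ directly, with $T\le Y_2^{2+\kappa}$.
\begin{itemize}
\item With (i), the product is $\Delta K Y_1^2Y_2^{143/42+O(\kappa)}$. This is admissible exactly when $Y_1\le Y_2^{67/84-3\kappa}$.
\item For larger $Y_1$, the term $N^{2\theta}/M^{2\theta-2}$ in the first part of (ii) gives the exponent $\frac34+\frac{17}{42}+2a+\frac{3\theta}{2}(1-a)$. This is decreasing in $a$. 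At $a=\frac{67}{84}$ it equals $\frac{11}{4}+\frac{51\theta}{168}$, which is $\frac{39}{8}<5$ at $\theta=7$.
\item The second part of (ii) is maximised at $Y_1=Y_2$, $T=Y_2^{2+\kappa}$, where it gives $Y_2^{4.993+O(\kappa)}$.
\end{itemize}
The split point is therefore forced to be about $Y_2^{67/84}$. The proof closes only because the admissible windows of (i) and (ii) overlap there.

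\textbf{Where $\theta<7$ is really used.} You attribute the restriction to the Huxley term at $M=N$, but that is mislocated. At $M=N$ the Huxley term equals $N^{3.411+374/641}\approx N^{3.9945}$ for every $\theta$, and for $M<N$ it decreases in $\theta$. The restriction instead comes from the term $N^{2\theta}/M^{2\theta-2}$ at the split point. There one needs $\frac{11}{4}+\frac{51\theta}{168}<5$, that is $\theta<\frac{1512}{204}\approx 7.41$. This is also consistent with the paper's remark that the upper limit can be pushed slightly beyond $7$.
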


\begin{proof}
    The proof is almost identical to the previous one. We focus on the main differences. \\

    \textit{Case 1. $\Delta \ge Y_2^{-2-\kappa}$.} Similarly, for $m\asymp Y_1$, $n\asymp Y_2$, and $m<cn$, we introduce the function
    \[
        f(m,n) := n^{\theta} \log n - m^{\theta} \log m, \qquad f(m, n) \asymp Y_2^{\theta} \log Y_2. 
    \] Repeating the same steps, we reduce the problem to estimating the integral
    \[
        I_{K, Y_1, Y_2, \Delta} := \Delta \int_0^{\Delta^{-1}} \bigg| \sum_{k \asymp K} e\big( u \log k \big) \bigg|^2 \bigg| \sum_{m \asymp Y_1} \sum_{n \asymp Y_2} e\big( u \log f(m,n) \big) \bigg|^2 du,
    \] and similarly split it into the integrals $I_{K,Y_1, Y_2, \Delta}^{(0)}$ and $I_{K,Y_1, Y_2, \Delta}^{(T)}$ for $T = 1,2,4,\ldots$, $T \le \Delta^{-1} \le Y_2^{2+\kappa}$, defined similarly to the previous case. 

    The first integral $I_{K,Y_1, Y_2, \Delta}^{(0)}$ is evaluated trivially:
    \begin{equation} \label{the_first_trivial}
        I_{K,Y_1, Y_2, \Delta}^{(0)} \ll \Delta K^2 Y_1^2 Y_2^2.
    \end{equation} When $1 \le T \le c_0 \max(Y_2,K)$, with sufficiently small $c_0>0$, we apply Lemma~\ref{Kusmin_Landau} to the $n$- or $k$-sum, depending on which one is longer. This gives the contribution
    \begin{equation} \label{the_second_Kusmin_Landau}
        \Delta \sum_{\substack{T \text{ dyadic} \\ 1 \le T \le c_0 \max(K, Y_2)}} \int_T^{2T} \min(K^2, Y_2^2) Y_1^2 \Big( \frac{\max(K, Y_2)}{T} \Big)^2 du \ll \Delta K^2 Y_1^2 Y_2^2.
    \end{equation}

    In the remaining range, we again apply H\"older's inequality:
    \[
        I_{K, Y_1, Y_2, \Delta}^{(T)} \le \Delta S_1^{\frac14} S_2^{\frac34},
    \] where $S_1$ and $S_2$ have the same meaning as in the previous proposition. The sum $S_1$ is the same as in the previous proposition and is bounded by
    \[
        S_1 \ll K^{4+\eps} T^{\frac32+\eps}.
    \] Next,
    \[
        S_2 \le \bigg( \max_{T \le u \le 2T} \Big| \sum_{m \asymp Y_1} \sum_{n \asymp Y_2} e\big( u \log f(m,n) \big) \Big|^{2/3} \bigg) \int_T^{2T} \Big| \sum_{m \asymp Y_1} \sum_{n \asymp Y_2} e\big( u \log f(m,n) \big) \Big|^2 du =: S_3 S_4.
    \] Similarly, by Lemma~\ref{exp_pairs},
    \[
        S_3 \ll Y_1^{\frac23} Y_2^{\frac{34}{63} + \frac{13\kappa}{126} + \eps},
    \] and for $S_4$ we obtain
    \[
        S_4 \ll T \# \Big\{ m_1, m_2 \asymp Y_1, \ n_1, n_2 \asymp Y_2: | f(m_1, n_1) - f(m_2, n_2) | \ll \frac{1}{T} Y_2^{\theta} \log Y_2 \Big\}.
    \] Here we do not enlarge the counting quantity in order to reduce it to a fourth-moment problem; rather, we keep both pairs of variables localized in their respective ranges. By a slight modification of~\cite[Lemma~1]{Robert-Sargos}, we then get
    \begin{multline*}
        S_4 \ll \int_0^T \bigg| \sum_{\substack{m_1, m_2 \asymp Y_1 \\ n_1, n_2 \asymp Y_2}} e \Big( u \frac{n_1^{\theta} \log n_1 - m_1^{\theta} \log m_1 + n_2^{\theta} \log n_2 - m_2^{\theta} \log m_2}{Y_2^{\theta} \log Y_2} \Big) \bigg| du \ll \\
        \int_0^T \bigg| \sum_{m \asymp Y_1} e\Big( u \frac{m^{\theta} \log m}{Y_2^{\theta} \log Y_2} \Big) \bigg|^2 \bigg| \sum_{n \asymp Y_2} e\Big( u \frac{n^{\theta} \log n}{Y_2^{\theta} \log Y_2} \Big) \bigg|^2 du.
    \end{multline*}

    Finally we consider two more subcases depending on the mutual sizes of $Y_1$ and $Y_2$. \\

    \textit{Case 1.1. $Y_1 \le Y_2^{67/84-3\kappa}$.} 
    Here, since the sum over $m \asymp Y_1$ is relatively short, we can use the first bound in Lemma~\ref{unbalanced_energy_bound}. It gives
    \[
        S_4 \ll Y_2^{\eps} \Big( Y_1^2 Y_2^2 + T Y_1^2 Y_2 \Big). 
    \]

    Combining it with the bounds for $S_1$ and $S_3$ from above, we find
    \begin{multline*}
        I_{K,Y_1,Y_2,\Delta}^{(T)} \ll \Delta K^{1+\frac{\eps}{4}} T^{\frac{3}{8}+\frac{\eps}{4}} \cdot Y_1^{\frac{1}{2}} Y_2^{\frac{17}{42} + \frac{13\kappa}{168} + \frac{3\eps}{4}} \cdot Y_2^{\frac{3\eps}{4}} \Big( Y_1^2 Y_2^2 + T Y_1^2 Y_2 \Big)^{\frac{3}{4}} \ll \\
        \Delta K^{1+\frac{\eps}{4}} Y_1^2 Y_2^{\frac{143}{42} + \frac{101\kappa}{84}} \ll \Delta K^{1+\frac{\eps}{4}} Y_2^{5-2\kappa},
    \end{multline*} where we used $T \ll Y_2^{2+\kappa}$ and $Y_1 \ll Y_2^{67/84 - 3\kappa}$. Summation over dyadic $T$ contributes at most a factor of $\log Y_2$. Therefore, using~\eqref{the_first_trivial} and~\eqref{the_second_Kusmin_Landau}, we find
    \[
        J_{K,Y_1,Y_2,\Delta} \ll \Delta K^2 Y_1^2 Y_2^2 + \Delta K^{1+\eps} Y_2^{5-\kappa}.
    \]

    \textit{Case 1.2. $Y_1 > Y_2^{67/84-3\kappa}$.} Here we use the second bound of Lemma~\ref{unbalanced_energy_bound}. Assume first that $T \le c_0 Y_2^{\theta} / Y_1^{\theta-1}$. Then
    \[
        S_4 \ll Y_2^{\eps} \Big( Y_1^2 Y_2^2 + \frac{Y_2^{2\theta}}{Y_1^{2\theta-2}} \Big).
    \] In this case,
    \begin{multline*}
        I_{K, Y_1, Y_2, \Delta}^{(T)} \ll \Delta K^{1+\frac{\eps}{4}} T^{\frac{3}{8}+\frac{\eps}{4}} \cdot Y_1^{\frac{1}{2}} Y_2^{\frac{17}{42} + \frac{13\kappa}{168} + \frac{3\eps}{4}} \cdot Y_2^{\frac{3\eps}{4}} \bigg( (Y_1 Y_2)^{\frac{3}{2}} + \frac{Y_2^{3\theta/2}}{Y_1^{3\theta/2 - 3/2}} \bigg) \ll \\
        (KY_2)^{\eps} \cdot \Delta K \Big( \frac{Y_2^{\theta}}{Y_1^{\theta-1}} \Big)^{3/8} \Big(  Y_1^2 Y_2^{\frac{40}{21} + \frac{13\kappa}{168}} + Y_1^{2 - \frac{3\theta}{2}} Y_2^{\frac{17}{42} + \frac{3\theta}{2} + \frac{13\kappa}{168}} \Big).
    \end{multline*} The maximum of the last expression is achieved at $\theta = 7$ and $Y_1 = Y_2^{67/84 - 3\kappa}$. Thus,
    \[
        I_{K, Y_1, Y_2, \Delta}^{(T)} \ll (KY_2)^{\eps} \cdot \Delta K \big( Y_2^{\frac{485}{112} + \kappa} + Y_2^{\frac{39}{8} + 7\kappa} \big) \ll \Delta K^{1+\eps} Y_2^{5-\kappa},
    \] which leads to the same bound as in Case~1.1.

    Finally, suppose that $T > c_0 Y_2^{\theta} / Y_1^{\theta-1}$. Then
    \[
        S_4 \ll Y_2^{\eps} \Big( 1 + \frac{T}{Y_2^{1.55}} \Big) \Big( Y_1^2 Y_2^2 + Y_1^{\frac{374}{641} + \frac{356\theta}{641}} Y_2^{3.411 - \frac{356\theta}{641}} \Big),
    \] and, consequently,
    \[
        I_{K, Y_1, Y_2, \Delta}^{(T)} \ll \Delta K^{1+\frac{\eps}{4}} T^{\frac{3}{8}+\frac{\eps}{4}} \cdot Y_1^{\frac{1}{2}} Y_2^{\frac{17}{42} + \frac{13\kappa}{168} + \frac{3\eps}{2}} \Big( 1 + \frac{T}{Y_2^{1.55}} \Big)^{3/4} \Big( Y_1^2 Y_2^2 + Y_1^{\frac{374}{641} + \frac{356\theta}{641}} Y_2^{3.411 - \frac{356\theta}{641}} \Big)^{3/4}.
    \] The maximum of the last expression is achieved when $T = Y_2^{2+\kappa}$ and $Y_1 = Y_2$. This gives
    \[
        I_{K, Y_1, Y_2, \Delta}^{(T)} \ll \Delta K^{1+\eps} Y_2^{4.993 + 2\kappa} \ll \Delta K^{1+\eps} Y_2^{5-\kappa},
    \] which is the same as in the previous cases. It remains only to sum over the dyadic values of $T$ in the corresponding range. We conclude
    \[
        J_{K,Y_1,Y_2, \Delta} \ll \Delta K^2 Y_1^2 Y_2^2 + \Delta K^{1+\eps} Y_2^{5-\kappa} \ll \Delta K^2 Y_2^4 + \Delta K^{1+\eps} Y_2^{5-\kappa}.
    \]

    \vspace{2ex}

    \textit{Case 2. $\Delta < Y_2^{-2-\kappa}$.} Here we again set $\tilde \Delta := Y_2^{-2-\kappa}$ and, by monotonicity, obtain
    \[
        J_{K,Y_1,Y_2,\Delta} \le J_{K,Y_1,Y_2,\tilde \Delta} \ll K^2 Y_2^{2-\kappa} + K^{1+\eps} Y_2^{3-2\kappa}.
    \] This completes the proof.

\end{proof}

\section{Counting for small $\theta$}

In this section, we prove the counting estimates used in the proof when \(0<\theta<\frac35\). We start with the near-diagonal case.

\begin{proposition} \label{count_small_theta_diag}
    Let \(K,Y,R\ge 1\) be real numbers, let $0 < \Delta \le 1$, \(0 < \theta < \frac{3}{5}\), and let \(0<c_1,c_2,c_3<1\) be fixed. Furthermore, assume that there is an absolute constant \(c_4>0\) such that $R\le c_4 K Y^{\theta-1}$.
    Then, for every $\eps>0$, we have
    \begin{multline} \label{to_prove_small_theta_new}
        J_{K,Y,R,\Delta} :=
        \# \Big\{
        c_1 RY K^{-1}<\ell_1,\ell_2\le RY K^{-1},\
        c_2 K Y^{\theta-1}<m_1,m_2\le K Y^{\theta-1}, \\
        c_3 R<r_1,r_2\le R,\ r_i\le 100m_i:
        \Big| F(\ell_1,\eta_1,\theta)\ell_1^{\frac1\theta}\eta_1^{1-\frac1\theta} -F(\ell_2,\eta_2,\theta)\ell_2^{\frac1\theta}\eta_2^{1-\frac1\theta}
        \Big| \le c_5 \Delta RY \log(RY K^{-1})
        \Big\} \ll \\
        (K Y R)^{\eps}
        \Big(
            K^2 Y^{2\theta-2}R^2+
            K^{\frac43}Y^{\frac{8\theta}{3}-\frac43}R^2+
            \Delta K^2 Y^{4\theta-2}R^2
        \Big)
    \end{multline}
for any fixed $c_5>0$, where the implied constant may depend on $\theta,c_1,c_2,c_3,c_4,c_5$, and $\eps$, where
    \begin{gather*}
        \eta_i:=\eta_i(m_i,r_i)=m_i^{-\frac{\theta}{1-\theta}}-(m_i+r_i)^{-\frac{\theta}{1-\theta}}, \\
        F(\ell,\eta,\theta) :=
        D(\theta)+\frac{1}{\theta^2}\log \ell-\frac{1}{\theta^2}\log \eta
        -\Big(1-\frac{1}{\theta}\Big)\frac{\eta_{\theta}}{\eta},
    \end{gather*}
    and \(D=D(\theta)\) is a constant.
\end{proposition}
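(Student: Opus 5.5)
The plan mirrors the proof of Proposition~\ref{count_prop_large_theta}: convert the counting condition into a multiplicative one, apply a Fejér-kernel Fourier decomposition, and treat the resulting $u$-integral range by range.

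\textbf{Reduction to a Fourier integral.} Put $g(\ell,m,r):=F(\ell,\eta,\theta)\ell^{1/\theta}\eta^{1-1/\theta}$. By~\eqref{F_typical_size}, $F\asymp -\log\ell\asymp -\log(RYK^{-1})$. Since $\eta\asymp R(KY^{\theta-1})^{-1/(1-\theta)}$, one computes $\ell^{1/\theta}\eta^{1-1/\theta}\asymp RY$. Hence $g\asymp RY\log(RYK^{-1})$ with a fixed sign, and the condition in~\eqref{to_prove_small_theta_new} implies $|\log(g_1/g_2)|\le C\Delta$. First I treat $\Delta\ge\Delta_0$ for a threshold $\Delta_0$ to be fixed later. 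The Fejér-kernel argument gives
\[
J_{K,Y,R,\Delta}\ll \Delta\int_0^{\Delta^{-1}}\Big|\sum_{\ell,m,r}e\big(u\log g(\ell,m,r)\big)\Big|^2du .
\]
The range $0\le u\le1$ contributes trivially $\Delta (RYK^{-1})^2(KY^{\theta-1})^2R^2=\Delta K^{0}Y^{2\theta}R^4\ll\Delta K^2Y^{4\theta-2}R^2$, using $R\ll KY^{\theta-1}$.

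\textbf{Dyadic ranges $u\asymp T$.} For $T$ below the length $L:=RYK^{-1}$ of the $\ell$-sum, Kusmin--Landau (Lemma~\ref{Kusmin_Landau}) applies in $\ell$. The relevant derivative is $\frac{d}{d\ell}\log g\asymp 1/\ell$, and it is monotone since $\frac1\theta\log\ell+\log|F|$ is concave for large $\ell$. The $\ell$-sum is then $\ll L/T$, so this range again gives $\ll\Delta K^2Y^{4\theta-2}R^2$.

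For larger $T$ I would not separate a zeta factor. Instead I bound
\[
\int_T^{2T}|\cdots|^2\,du\le \max_u|\cdots|^{2/3}\int_T^{2T}|\cdots|^{4/3}\,du,
\]
or more simply use a second-derivative bound in $\ell$ via Lemma~\ref{Corput}. Here $\frac{d^2}{d\ell^2}\,u\log g\asymp T/L^2$, so the $\ell$-sum is $\ll T^{1/2}+L/T^{1/2}$. The remaining mean value in $(m,r)$ is handled by the diagonal count $\int_T^{2T}|\sum_{m,r}\cdots|^2\ll T\cdot\#\{|\log(g_1/g_2)|\le 1/T\}$. The key structural point is that for fixed $\ell$ the map $(m,r)\mapsto\eta$ is essentially injective on the scale $1/T$, giving roughly $T\,(KY^{\theta-1}R)(1+KY^{\theta-1}R/T)$. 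Balancing these pieces against $\Delta$ should produce the term $K^{4/3}Y^{8\theta/3-4/3}R^2$. That is, the saving is a cube root of $(KY^{\theta-1})^{2}$ relative to the trivial $(KY^{\theta-1}R)^2L^2\Delta$ at the critical $\Delta$, consistent with the ``$4+\frac23$'' heuristic~\eqref{weak_counting}.

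\textbf{Small $\Delta$ and the main obstacle.} For $\Delta<\Delta_0$, monotonicity gives $J_{K,Y,R,\Delta}\le J_{K,Y,R,\Delta_0}$. I choose $\Delta_0\approx Y^{-2\theta}$ (the inverse size of a typical spacing), so that $\Delta_0K^2Y^{4\theta-2}R^2=K^2Y^{2\theta-2}R^2$, the diagonal term. The main obstacle is the large-$T$ range, where one must genuinely count near-coincidences $|g(\ell_1,m_1,r_1)-g(\ell_2,m_2,r_2)|$ small. There the dependence of $F$ on $\log\ell$ and on the $\theta$-derivative of $\eta$ destroys the clean multiplicative structure available in~\cite{Shubin_Radziwill}. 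My first attempt would be to freeze $(m,r)$, view $g$ as a smooth monotone function of $\ell$ with controlled second derivative, and count lattice points near the curve $\ell_2=\psi(\ell_1)$ by van der Corput (Lemma~\ref{Corput}). The exponent $4/3$ is exactly what that second-derivative estimate yields after optimizing the splitting parameter.
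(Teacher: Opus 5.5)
\textbf{Genuine gap: the large-$T$ range.} Your reduction to $\Delta\int_0^{\Delta^{-1}}|\sum_{\ell,m,r}e(u\log g)|^2\,du$ is fine, as are the trivial range $u\le 1$ and the Kusmin--Landau range. You apply Lemma~\ref{Kusmin_Landau} in $\ell$ for $T\le c_0L$; the paper applies it in $m$ for $T\le c_0M$ with $M=KY^{\theta-1}$; either works. The gap is the large-$T$ range, which carries the whole content of the proposition, and none of your three mechanisms works there.

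First, a pointwise second-derivative bound in $\ell$, with the $(m,r)$-sum estimated trivially, gives $\Delta\int_T^{2T}|\cdot|^2\ll \Delta T(MR)^2(T+L^2/T)$. Summing over dyadic $T\le\Delta^{-1}$ gives $(MR)^2\Delta^{-1}$, a loss of $\Delta^{-1}$ over the diagonal term.

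Second, pairing a sup-bound in $\ell$ with a mean value (diagonal count) in $(m,r)$ needs $\log g$ to split as a function of $\ell$ plus a function of $(m,r)$. That is what $\log(kf(n,r))=\log k+\log f(n,r)$ gave for $\theta>3$. Here $\log F(\ell,\eta,\theta)$ couples the variables, as you note yourself, so $\int|A|^2|B|^2\le\max|A|^2\int|B|^2$ is not available.

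Third, freezing $(m_1,r_1,m_2,r_2)$ and counting $(\ell_1,\ell_2)$ near $\ell_2=\psi(\ell_1)$ does not help. The curve $\psi$ is essentially the line $\ell_2=(\eta_1/\eta_2)^{\theta-1}\ell_1$, whose curvature comes only from the logarithmic factor $F$. So the count is governed by rational approximations to the slope, and summing over the quadruples brings back the coupled problem. Your claim that this yields the exponent $\frac43$ is not supported by any computation.

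\textbf{The missing idea: treat the $(\ell,m)$ double sum jointly and pointwise in $v$.} For $T\gg M$, apply the B-process \cite[Theorem~8.16]{IK} in $\ell$. This turns the $\ell$-sum into a dual sum of length $\asymp T/L$ with weights $\asymp L/T^{1/2}$. Then apply Lemma~\ref{Corput} in $m$ to the transformed phase $\Phi^{\ast}(m)=\Phi(u_0(m),m)-nu_0(m)$.

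Because the coupling enters only through $F$, one has $\Phi_{\ell m}=vF_\ell F_m/F^2=O(T/(\ell m\log^2 Y))$. Hence $(\Phi^{\ast})''=\Phi_{mm}-\Phi_{\ell m}^2/\Phi_{\ell\ell}\asymp T/M^2$, driven by $(\log\eta)_{mm}\asymp m^{-2}$. This gives $\sum_{\ell,m}\ll T+M$ for each $r$, so $\Delta\int_T^{2T}|\cdot|^2\ll\Delta TR^2(T+M)^2$, which sums to $R^2(\Delta^{-2}+M^2)$.

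The $R^2\Delta^{-2}$ term forces the threshold $\Delta_0=K^{-2/3}Y^{(2-4\theta)/3}$, chosen so that $\Delta^{-2}\le\Delta K^2Y^{4\theta-2}$. The term $K^{4/3}Y^{8\theta/3-4/3}R^2$ is then just $\Delta_0K^2Y^{4\theta-2}R^2$, coming from the monotonicity step; it is not a lattice-point exponent.

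Your choice $\Delta_0\approx Y^{-2\theta}$, if Case~1 could be pushed that far, would remove this term altogether. It would give essentially the stronger bound \eqref{strong_counting}, which this method does not reach. As written, your plan has no mechanism producing either bound.
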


\begin{proof}
As in the previous section, we consider two cases depending on the size of~$\Delta$.

\vspace{2ex}

\textit{Case 1. \(\Delta \ge K^{-2/3}Y^{-4\theta/3+2/3}\).}

We may assume that \(K\le (YR)^{1-\eps}\), since otherwise the number of choices for \(\ell_1,\ell_2\) is \(\ll (RY)^{\eps}\), and the result follows by the trivial bound. We may also assume that \(K Y^{\theta-1}\gg 1\), since otherwise \(J_{K,Y,R,\Delta}=0\). Finally, recall that
\begin{equation} \label{first_recall_new}
    F(\ell,\eta,\theta)\asymp \log \ell
\end{equation}
in the present ranges.

The next steps follow the same pattern as in the large-\(\theta\) case. We divide the inequality inside the definition of \(J_{K,Y,R,\Delta}\) in~\eqref{to_prove_small_theta_new} by the second term. This gives
\[
    \bigg| \frac{F(\ell_1,\eta_1,\theta)\ell_1^{\frac1\theta}\eta_1^{1-\frac1\theta}} {F(\ell_2,\eta_2,\theta)\ell_2^{\frac1\theta}\eta_2^{1-\frac1\theta}}-1
    \bigg| \le c_6\Delta
\] 
for some absolute \(c_6>0\), which implies
\[
    \Big| \log \frac{F(\ell_1,\eta_1,\theta)\ell_1^{\frac1\theta}\eta_1^{1-\frac1\theta}} {F(\ell_2,\eta_2,\theta)\ell_2^{\frac1\theta}\eta_2^{1-\frac1\theta}} \Big| \le c_7\Delta
\]
for some \(c_7>0\). Thus
\[
    J_{K,Y,R,\Delta} \ll
    \Delta \int_0^{\Delta^{-1}}
    \Big|
        \sum_{\ell,m,r} e\big(\Phi(\ell;v,\eta,\theta)\big)
    \Big|^2 dv,
\]
where
\begin{equation} \label{def_Phi_new}
    \Phi(\ell;v,\eta,\theta) :=
    -v\Big(
        \log F(\ell,\eta,\theta)
        +\frac{1}{\theta}\log \ell
        +\Big(1-\frac{1}{\theta}\Big)\log \eta
    \Big).
\end{equation}
We split the integral into
\[
    I_{K,Y,R,\Delta}^{(0)} :=
    \Delta \int_0^1 |\ldots|^2\,dv, \qquad
    I_{K,Y,R,\Delta}^{(T)} :=
    \Delta \int_T^{2T} |\ldots|^2\,dv,
    \quad T=1,2,4,\ldots.
\]

The integral over \([0,1]\) is bounded trivially:
\begin{equation} \label{trivial_integral_new}
    I_{K,Y,R,\Delta}^{(0)} \ll
    \Delta (RYK^{-1})^2 (K Y^{\theta-1})^2 R^2 \ll
    \Delta Y^{2\theta}R^4 \ll
    \Delta K^2 Y^{4\theta-2}R^2,
\end{equation}
since \(R^2\le K^2 Y^{2\theta-2}\).

Set
\[
    L:=RYK^{-1}, \qquad M:=K Y^{\theta-1}.
\]

Next, to estimate the integrals $I_{K,Y,R,\Delta}^{(T)}$, we need several derivatives of \(\eta\), \(F\), and the phase \(\Phi\). Recall that
\[
    \eta(m,r) =
    m^{-\frac{\theta}{1-\theta}}-(m+r)^{-\frac{\theta}{1-\theta}} =
    m^{-\frac{\theta}{1-\theta}}
    \Big(
        1-\Big(\frac{m}{m+r}\Big)^{\frac{\theta}{1-\theta}}
    \Big).
\]
Differentiating once and twice with respect to \(m\), we obtain
\[
    \eta_m =
    -\frac{\theta}{1-\theta}m^{-\frac{1}{1-\theta}}
    +\frac{\theta}{1-\theta}(m+r)^{-\frac{1}{1-\theta}} =
    -\frac{\theta}{1-\theta}m^{-\frac{1}{1-\theta}}
    \Big(
        1-\Big(\frac{m}{m+r}\Big)^{\frac{1}{1-\theta}}
    \Big),
\] and
\[
    \eta_{mm} =
    \frac{\theta}{(1-\theta)^2}m^{-\frac{2-\theta}{1-\theta}}
    -\frac{\theta}{(1-\theta)^2}(m+r)^{-\frac{2-\theta}{1-\theta}} =
    \frac{\theta}{(1-\theta)^2}m^{-\frac{2-\theta}{1-\theta}}
    \Big(
        1-\Big(\frac{m}{m+r}\Big)^{\frac{2-\theta}{1-\theta}}
    \Big).
\]
Therefore
\begin{equation} \label{eta_m_over_eta_new}
    \frac{\eta_m}{\eta} =
    -\frac{\theta}{1-\theta}\frac{1}{m}\,
    \frac{1-\big(\frac{m}{m+r}\big)^{1/(1-\theta)}
    }{
        1-\big(\frac{m}{m+r}\big)^{\theta/(1-\theta)}
    }.
\end{equation}
Since \(r\le 100m\), we have
\[
    \frac{m}{m+r}\in \Big[\frac{1}{101},1\Big),
\]
and hence the large fraction in~\eqref{eta_m_over_eta_new} is a continuous positive function of $\frac{m}{m+r}$ on \([\frac{1}{101},1)\). Therefore
\begin{equation} \label{eta_m_over_eta_estimate_new}
    \frac{\eta_m}{\eta}\asymp \frac{1}{m}.
\end{equation}
Also,
\[
    \frac{\eta_{mm}}{\eta} =
    \frac{\theta}{(1-\theta)^2}\frac{1}{m^2}\,
    \frac{
        1-\big(\frac{m}{m+r}\big)^{(2-\theta)/(1-\theta)}
    }{
        1-\big(\frac{m}{m+r}\big)^{\theta/(1-\theta)}
    }.
\]
Thus
\begin{multline*}
    (\log \eta)_{mm} =
    \frac{\eta_{mm}}{\eta} -
    \Big(\frac{\eta_m}{\eta}\Big)^2 = \\
    \frac{\theta}{(1-\theta)^2}\frac{1}{m^2}\,
    \frac{
        \Big(1-\big(\frac{m}{m+r}\big)^{(2-\theta)/(1-\theta)}\Big)
        \Big(1-\big(\frac{m}{m+r}\big)^{\theta/(1-\theta)}\Big) -
        \theta\Big(1-\big(\frac{m}{m+r}\big)^{1/(1-\theta)}\Big)^2
    }{
        \Big(1-\big(\frac{m}{m+r}\big)^{\theta/(1-\theta)}\Big)^2
    }.
\end{multline*}
Again, the large fraction above is a positive continuous function of \(\frac{m}{m+r}\in [\frac{1}{101},1)\). Therefore
\begin{equation} \label{estimate_for_log_eta_new}
    (\log \eta)_{mm}\asymp_{\theta} \frac{1}{m^2}.
\end{equation}

Next, recall that
\[
    F(\ell,\eta,\theta) =
    D(\theta)+\frac{1}{\theta^2}\log \ell-\frac{1}{\theta^2}\log \eta
    -\Big(1-\frac{1}{\theta}\Big)\frac{\eta_{\theta}}{\eta}.
\]
We have
\[
    \eta_{\theta} =
    -\frac{\log m}{(1-\theta)^2}m^{-\frac{\theta}{1-\theta}} +
    \frac{\log(m+r)}{(1-\theta)^2}(m+r)^{-\frac{\theta}{1-\theta}},
\]
and therefore
\[
    \frac{\eta_{\theta}}{\eta} =
    -\frac{1}{(1-\theta)^2}
    \bigg(
        \log m + \frac{
            \big(\frac{m}{m+r}\big)^{\theta/(1-\theta)} \log\! \big(\frac{m}{m+r} \big)
        }{
            1-\big( \frac{m}{m+r} \big)^{\theta/(1-\theta)}
        }
    \bigg).
\]

Next, we differentiate with respect to \(m\). We have
\[
    \frac{d}{dm}\Big(\frac{m}{m+r}\Big) \ll \frac{1}{m}, \qquad
    \frac{d^2}{dm^2}\Big(\frac{m}{m+r}\Big) \ll \frac{1}{m^2}.
\]
One then directly verifies
\[
    \Big(\frac{\eta_{\theta}}{\eta}\Big)_m = O\Big(\frac{1}{m}\Big),
    \qquad
    \Big(\frac{\eta_{\theta}}{\eta}\Big)_{mm} = O\Big(\frac{1}{m^2}\Big).
\]
Combining this with~\eqref{eta_m_over_eta_estimate_new} and~\eqref{estimate_for_log_eta_new}, we get
\begin{equation} \label{F_m_estimate_new}
    F_m = -\frac{1}{\theta^2}\frac{\eta_m}{\eta}
    -\Big(1-\frac{1}{\theta}\Big)\Big(\frac{\eta_{\theta}}{\eta}\Big)_m = O\Big(\frac{1}{m}\Big),
\end{equation}
and similarly
\begin{equation} \label{F_mm_estimate_new}
    F_{mm}=O\Big(\frac{1}{m^2}\Big).
\end{equation}

\vspace{2ex}

\textit{Range 1: \(1\le T\le c_0 M\).}

Here we assume that $c_0>0$ is sufficiently small, so that Lemma~\ref{Kusmin_Landau} is applicable. By~\eqref{first_recall_new}, \eqref{eta_m_over_eta_estimate_new}, and \eqref{F_m_estimate_new}, we obtain
\[
    \frac{d}{dm}\Phi(\ell;v,\eta,\theta)
    = -v\Big( \frac{F_m}{F} +
        \Big(1-\frac{1}{\theta}\Big)\frac{\eta_m}{\eta} \Big)
    \asymp \frac{T}{m}
    \asymp \frac{T}{M}.
\]

Since \(1\le T\le c_0 M\), Lemma~\ref{Kusmin_Landau} applies to the sum over \(m\). We also note that $\Phi_m$ is monotone in $m$. Indeed, writing
$a=\theta/(1-\theta)$ and
\[
    \eta(m,r)=m^{-a}-(m+r)^{-a},
\]
one has
\[
    \frac{\eta_m}{\eta}\asymp -\frac1m, \qquad
    \Big(\frac{\eta_m}{\eta}\Big)_m
    =(\log\eta)_{mm}\asymp \frac1{m^2}.
\]
Since $1-\frac{1}{\theta}<0$, it follows that
\[
    \frac{d}{dm}\Big(\Big(1-\frac1\theta\Big)\frac{\eta_m}{\eta}\Big)
    \asymp -\frac1{m^2}.
\]
On the other hand, $F_m=O(\frac{1}{m})$, $F_{mm}=O(\frac{1}{m^2})$, and
$F\asymp \log Y$, whence
\[
    \frac{d}{dm}\Big(\frac{F_m}{F}\Big)
    =O\Big(\frac1{m^2\log Y}\Big).
\]
Thus $\Phi_{mm}$ has a fixed sign, and hence $\Phi_m$ is monotone. Then Lemma~\ref{Kusmin_Landau} gives
\[
    I_{K,Y,R,\Delta}^{(T)} \ll \Delta T
    \Big( \sum_{\ell,r} \frac{M}{T} \Big)^2
    \ll \Delta T
    \Big( \frac{RY}{K}\cdot R\cdot \frac{M}{T}
    \Big)^2.
\]
Summation over dyadic \(T\) in this range gives
\begin{equation} \label{Kusmin_Landau_integral_new}
    \sum_{\substack{T \text{ dyadic} \\ 1\le T\le c_0 M}} I_{K,Y,R,\Delta}^{(T)} \ll
    \Delta L^2R^2M^2 \ll
    \Delta Y^{2\theta}R^4 \ll
    \Delta K^2Y^{4\theta-2}R^2.
\end{equation}

\vspace{2ex}

\textit{Range 2: \(c_0 M \le T\ll \Delta^{-1}\).}

Here we apply Poisson summation to the sum over \(\ell\), and then apply Lemma~\ref{Corput} to the sum over \(m\). Since $K \ll (YR)^{1-\eps}$, we have
\[
    F(\ell,\eta,\theta)\gg \log \ell \gg_{\eps}\log Y,
\]
and one easily checks that
\[
    F_{\ell}\asymp \frac{1}{\ell}\asymp \frac{1}{L}, \qquad
    F_{\ell\ell}\asymp -\frac{1}{\ell^2}\asymp -\frac{1}{L^2}.
\]
Hence
\[
    \Phi_{\ell\ell}(\ell;v,\eta,\theta) =
    -v\Big( \frac{F_{\ell\ell}}{F}-\frac{F_{\ell}^2}{F^2}\Big)+\frac{v}{\theta \ell^2} \asymp \frac{T}{L^2}.
\]
Similarly,
\[
    \Phi_{\ell\ell\ell}\asymp \frac{T}{L^3}, \qquad
    \Phi_{\ell\ell\ell\ell}\asymp \frac{T}{L^4}.
\]

Applying~\cite[Theorem~8.16]{IK}, we get
\begin{equation} \label{IK_816_new}
    \sum_{\ell\asymp L} e\big(\Phi(\ell;v,\eta,\theta)\big) = \sum_{a<n<b}
    \frac{1}{(\Phi_{\ell\ell}(u_0))^{1/2}}
    e\Big(\Phi(u_0)-nu_0+\frac{1}{8}\Big) +
    R_{\Phi}(K,Y,R),
\end{equation}
where \(u_0=u_0(n,m,r,v)\),
\begin{gather}
    \Phi_{\ell}(u_0,m)=n, \label{def_n_new} \\
    R_{\Phi}(K,Y,R)\ll \frac{L}{T^{1/2}}+\log(b-a+1), \label{R_Phi_bound_new} \\
    b-a \ll v\Big(\big|\frac{F_{\ell}}{F}(\ell,\eta,\theta)\big|+\frac{1}{\ell}\Big)\ll \frac{T}{L}. \label{b_a_bound_new}
\end{gather}
Thus
\begin{equation} \label{weight_estimate_new}
    \frac{1}{\sqrt{\Phi_{\ell\ell}(u_0)}}\asymp \frac{L}{T^{1/2}}.
\end{equation}

The contribution of the error term \(R_{\Phi}\) to \(I_{K,Y,R,\Delta}^{(T)}\), using as usual $|a+b|^2 \le 2|a|^2 + 2|b|^2$, is
\[
    \Delta \int_T^{2T}
    \Big| \sum_{m,r} R_{\Phi}(K,Y,R)
    \Big|^2 dv \ll \Delta T (MR)^2
    \Big( \frac{L^2}{T}+\log^2(KT) \Big).
\]
Summing over dyadic \(T\) in the present range, we obtain
\begin{equation} \label{R_Phi_contribution_new}
    \sum_{\substack{T \text{ dyadic} \\ c_0 M \le T \ll \Delta^{-1}}} \Delta T (MR)^2
    \Big(
        \frac{L^2}{T}+ (\log KT)^2
    \Big) \ll 
    (KYR)^{\eps}
    \Big(
        \Delta K^2Y^{4\theta-2}R^2 +
        K^2Y^{2\theta-2}R^2
    \Big).
\end{equation}

Next, let
\[
    \Phi^{\ast}(m):=\Phi(u_0(m),m)-nu_0(m)+\frac{1}{8}.
\]
Differentiating the identity \(\Phi_{\ell}(u_0(m),m)=n\) with respect to \(m\), we get
\[
    \Phi_{\ell\ell}(u_0,m)\frac{du_0}{dm}+\Phi_{\ell m}(u_0,m)=0,
\]
and therefore
\begin{equation} \label{expression_for_u_0_new}
    \frac{du_0}{dm}=-\frac{\Phi_{\ell m}(u_0,m)}{\Phi_{\ell\ell}(u_0,m)}.
\end{equation}
Also,
\[
    \frac{d\Phi^{\ast}}{dm} =
    \Phi_{\ell}(u_0,m)\frac{du_0}{dm}+\Phi_m(u_0,m)-n\frac{du_0}{dm} = \Phi_m(u_0,m),
\]
hence
\[
    \frac{d^2\Phi^{\ast}}{dm^2} =
    \Phi_{mm}(u_0,m)-\frac{\Phi_{\ell m}(u_0,m)^2}{\Phi_{\ell\ell}(u_0,m)}.
\]

Differentiating~\eqref{def_Phi_new} twice with respect to \(m\), we obtain
\begin{equation} \label{Phi_mm_new}
    \Phi_{mm} =
    -v\Big(
        \frac{F_{mm}}{F}
        -\frac{F_m^2}{F^2}
        +\Big(1-\frac{1}{\theta}\Big)(\log \eta)_{mm}
    \Big).
\end{equation}
Using~\eqref{F_m_estimate_new} and~\eqref{F_mm_estimate_new}, we get
\[
    \frac{F_{mm}}{F}=O\Big(\frac{1}{m^2\log Y}\Big),
    \qquad \frac{F_m^2}{F^2}=O\Big(\frac{1}{m^2(\log Y)^2}\Big).
\]
Thus the main contribution to \(\Phi_{mm}\) comes from the \((\log \eta)_{mm}\)-term, and by~\eqref{estimate_for_log_eta_new},
\[
    \Phi_{mm}\asymp_{\theta} \frac{T}{m^2}.
\]

Next,
\[
    \Phi_{\ell m} = -v\Big(\frac{F_{\ell}}{F}\Big)_m =
    v\frac{F_{\ell}F_m}{F^2},
\]
because \(F_{\ell}\) does not depend on \(m\). Therefore
\[
    \Phi_{\ell m} = O\Big( \frac{T}{\ell m(\log Y)^2} \Big).
\]
Since \(\Phi_{\ell\ell}\asymp \frac{T}{\ell^2}\), we obtain
\[
    \frac{\Phi_{\ell m}^2}{\Phi_{\ell\ell}} =
    O\Big( \frac{T}{m^2(\log Y)^4} \Big),
\]
which is negligible compared with \(\Phi_{mm}\asymp \frac{T}{m^2}\). Hence
\[
    \frac{d^2\Phi^{\ast}}{dm^2} \asymp
    \frac{T}{m^2} \asymp \frac{T}{M^2}.
\]

Now we apply Lemma~\ref{Corput} with
\[
    \lambda_2:=\frac{T}{M^2}
\]
to the sum over \(m\). Then
\[
    \sum_{m\asymp M} e\big(\Phi^{\ast}(m)\big) \ll
    M\lambda_2^{\frac12}+\lambda_2^{-\frac12} \ll
    T^{\frac12}+MT^{-\frac12}.
\]
Using this together with~\eqref{IK_816_new}, \eqref{b_a_bound_new}, and~\eqref{weight_estimate_new}, we obtain
\begin{multline*}
    \Delta \int_T^{2T}
    \Big|
        \sum_{r\asymp R}
        \sum_{a<n<b}
        \frac{1}{(\Phi_{\ell\ell}(u_0))^{1/2}}
        \sum_{m\asymp M}
        e\big(\Phi^{\ast}(m)\big)
    \Big|^2 dv
    \ll \\
    \Delta T
    \Big(
        R\cdot \frac{T}{L}\cdot \frac{L}{T^{1/2}}
        \cdot \Big(T^{1/2}+\frac{M}{T^{1/2}}\Big)
    \Big)^2 \ll
    \Delta T R^2 (T+M)^2 \ll \Delta R^2(T^3+M^2T).
\end{multline*} Summing over dyadic \(T\) in the present range gives
\[
    \sum_{\substack{T \text{ dyadic} \\ c_0 M \le T\ll \Delta^{-1}}}
    \Delta T R^2(T+M)^2 \ll
    R^2(\Delta^{-2}+M^2).
\]

Under the current assumption,
$\Delta^{-2}\le \Delta K^2Y^{4\theta-2}$, and \(M^2=K^2Y^{2\theta-2}\). Thus
\begin{multline}
\label{main_after_corput_2_new}
    \sum_{\substack{T \text{ dyadic} \\ c_0 M \le T\ll \Delta^{-1}}}
    \Delta \int_T^{2T}
    \Big|
        \sum_{r\asymp R}
        \sum_{a<n<b}
        \frac{1}{(\Phi_{\ell\ell}(u_0))^{1/2}}
        \sum_{m\asymp M}
        e\big(\Phi^{\ast}(m)\big)
    \Big|^2 dv
    \ll \\
    \Delta K^2Y^{4\theta-2}R^2+K^2Y^{2\theta-2}R^2.
\end{multline}

Combining the bounds~\eqref{trivial_integral_new}, \eqref{Kusmin_Landau_integral_new}, \eqref{R_Phi_contribution_new}, and~\eqref{main_after_corput_2_new}, we find
\begin{equation} \label{final_bound_new}
    J_{K,Y,R,\Delta} \ll (KYR)^{\eps}
    \Big(
        \Delta K^2Y^{4\theta-2}R^2+K^2Y^{2\theta-2}R^2
    \Big).
\end{equation}

\vspace{2ex}

\textit{Case 2. \(\Delta < K^{-2/3}Y^{-4\theta/3+2/3}\).}

Set
\[
    \widetilde{\Delta}:=K^{-\frac23}Y^{-\frac{4\theta}{3}+\frac23}.
\]
By monotonicity and~\eqref{final_bound_new},
\begin{multline*}
    J_{K,Y,R,\Delta} \le
    J_{K,Y,R,\widetilde{\Delta}} \ll
    (KYR)^{\eps}
    \Big(
        \widetilde{\Delta} K^2Y^{4\theta-2}R^2
        +K^2Y^{2\theta-2}R^2
    \Big) = \\
    (KYR)^{\eps} \Big(
        K^{\frac{4}{3}}Y^{\frac{8\theta}{3}-\frac{4}{3}}R^2+K^2Y^{2\theta-2}R^2
    \Big).
\end{multline*}
Combining the two cases completes the proof.
\end{proof}

\begin{proposition} \label{count_small_theta_nondiag}
    Let \(K,Y_1,Y_2\ge 1\) be real numbers such that \(Y_2<2Y_1\), let \(0<\Delta\le 1\), \(0<\theta<\frac35\), and let \(0<c_1,c_2,c_3<1\) be fixed. Then, for every $\eps>0$, we have
    \begin{multline} \label{to_prove_small_theta}
        J_{K,Y_1,Y_2,\Delta} :=
        \# \Big\{
        c_1Y_1^{\theta}<\ell_1,\ell_2\le Y_1^{\theta},\
        c_2KY_2^{\theta-1}<m_1,m_2\le KY_2^{\theta-1}, \\
        c_3KY_1^{\theta-1}<n_1,n_2\le KY_1^{\theta-1},\ n_i\le \frac{m_i}{2}: \\
        \Big| F(\ell_1,\eta_1,\theta)\ell_1^{\frac1\theta}\eta_1^{1-\frac1\theta}
        -F(\ell_2,\eta_2,\theta)\ell_2^{\frac1\theta}\eta_2^{1-\frac1\theta}
        \Big| \le c_4 \Delta K Y_1^{\theta}\log Y_1
        \Big\} \ll \\
        (K Y_1 Y_2)^{\eps}
        \Big(
            K^4Y_1^{2\theta-2}Y_2^{2\theta-2}+
            K^{\frac{10}{3}}Y_1^{\frac{8\theta}{3}-\frac43}Y_2^{2\theta-2}+
            \Delta K^4Y_1^{4\theta-2}Y_2^{2\theta-2}
        \Big)
    \end{multline}
for any fixed $c_4>0$, where the implied constant may depend on $\theta,c_1,c_2,c_3,c_4$, and $\eps$,
where
    \begin{gather*}
        \eta_i:=\eta_i(m_i,n_i)=n_i^{-\frac{\theta}{1-\theta}}-m_i^{-\frac{\theta}{1-\theta}}, \\
        F(\ell,\eta,\theta) :=
        D(\theta)+\frac{1}{\theta^2}\log \ell-\frac{1}{\theta^2}\log \eta
        -\Big(1-\frac{1}{\theta}\Big)\frac{\eta_{\theta}}{\eta},
    \end{gather*}
    and \(D=D(\theta)\) is a constant.
\end{proposition}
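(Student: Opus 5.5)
We follow the proof of Proposition~\ref{count_small_theta_diag} closely, with $M:=KY_2^{\theta-1}$ and $M_1:=KY_1^{\theta-1}$ in place of $M$ and $R$, and $L:=Y_1^{\theta}$ as the length of the $\ell$-sum. Note that in this off-diagonal regime $\eta(m,n)=n^{-\frac{\theta}{1-\theta}}-m^{-\frac{\theta}{1-\theta}}\asymp n^{-\frac{\theta}{1-\theta}}\asymp M_1^{-\frac{\theta}{1-\theta}}$, because $n\le m/2$. Consequently $\ell^{\frac1\theta}\eta^{1-\frac1\theta}\asymp KY_1^{\theta}$, and $F\asymp\log \ell\asymp \log Y_1$ by~\eqref{F_typical_size}.

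\emph{Case 1: $\Delta\ge K^{-2/3}Y_1^{-4\theta/3+2/3}$.} Dividing the two terms in the inequality as before, the defining inequality becomes $|\log(\cdot/\cdot)|\le c\Delta$. The Fej\'er kernel argument then gives
\[
J\ll \Delta\int_0^{\Delta^{-1}}\Big|\sum_{\ell,m,n}e(\Phi(\ell;v,\eta,\theta))\Big|^2dv,
\]
with the same $\Phi$ as in~\eqref{def_Phi_new}. We split the integral into $[0,1]$ and dyadic blocks $[T,2T]$, and treat the ranges in turn.

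\textbf{Trivial range.} On $[0,1]$ the trivial bound gives $\Delta L^2M^2M_1^2=\Delta K^4Y_1^{4\theta-2}Y_2^{2\theta-2}$, which is exactly the third target term.

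\textbf{Range $1\le T\le c_0M_1$.} Here we apply Lemma~\ref{Kusmin_Landau} to the $n$-sum. The key input is $\eta_n/\eta\asymp 1/n$, and this is essentially cleaner than before, since $\eta$ is dominated by its $n^{-\theta/(1-\theta)}$ part. We also need $(\log\eta)_{nn}\asymp 1/n^2$ (giving monotonicity) together with $F_n=O(1/n)$ and $F_{nn}=O(1/n^2)$. Computing $\eta_\theta/\eta$ as in the diagonal case but with the ratio $n/m\in(0,1/2]$ shows that $m$-dependence enters only through bounded smooth functions of $n/m$. The derivative estimates then follow from the same continuity/compactness argument on $n/m\in[c,1/2]$. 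The resulting contribution is $\Delta L^2M^2M_1^2$ again.

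\textbf{Range $c_0M_1\le T\ll\Delta^{-1}$.} We apply \cite[Theorem~8.16]{IK} to the $\ell$-sum: $\Phi_{\ell\ell}\asymp T/L^2$, there are $\ll T/L$ dual frequencies, and the weights are $\asymp L/T^{1/2}$. The error term $R_\Phi\ll L/T^{1/2}+\log$ contributes
\[
\Delta T(MM_1)^2(L^2/T+\log^2)\ll (KY_1Y_2)^{\eps}\big(\Delta K^4Y_1^{4\theta-2}Y_2^{2\theta-2}+K^4Y_1^{2\theta-2}Y_2^{2\theta-2}\big),
\]
using $T\ll \Delta^{-1}$. For the main term we fix $m$ and apply Lemma~\ref{Corput} to the $n$-sum of $e(\Phi^*(n))$. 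This requires $\Phi^*_{nn}=\Phi_{nn}-\Phi_{\ell n}^2/\Phi_{\ell\ell}\asymp T/M_1^2$, where $\Phi_{\ell n}=vF_\ell F_n/F^2$ carries the saving $(\log Y_1)^{-2}$, exactly as before. The resulting bound is $T^{1/2}+M_1T^{-1/2}$, and the sum over $m$ is taken trivially, costing a factor $M$. The block therefore contributes
\[
\Delta T\big(M\cdot\tfrac TL\cdot\tfrac{L}{T^{1/2}}(T^{1/2}+M_1T^{-1/2})\big)^2\ll \Delta M^2(T^3+M_1^2T).
\]
Summing over dyadic $T$ gives $M^2(\Delta^{-2}+M_1^2)$. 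Using the Case~1 assumption $\Delta^{-2}\le \Delta L^2M_1^2$, i.e.\ $\Delta^{-3}\le K^2Y_1^{4\theta-2}$, this is bounded by the first and third target terms.

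\emph{Case 2: $\Delta< \tilde\Delta:=K^{-2/3}Y_1^{-4\theta/3+2/3}$.} By monotonicity $J_{\Delta}\le J_{\tilde\Delta}$. Substituting $\tilde\Delta$ into the Case~1 bound turns $\tilde\Delta K^4Y_1^{4\theta-2}Y_2^{2\theta-2}$ into $K^{10/3}Y_1^{8\theta/3-4/3}Y_2^{2\theta-2}$, the middle term.

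\textbf{Main obstacle.} The step I expect to need the most care is verifying the derivative structure uniformly. We need $\Phi_{nn}$ to be of one sign and of size $T/n^2$, and the cross term $\Phi_{\ell n}$ to be negligible, even though $F$ now depends on both $n$ and $m$ through $\eta_\theta/\eta$. This requires $F\gg\log Y_1$, which is handled by assuming $K\le Y_1^{1-\eps}$; otherwise the $\ell$-range is tiny and the trivial bound suffices. The condition $Y_2<2Y_1$ should only be used to keep the ranges compatible.
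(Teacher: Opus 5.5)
Your outline follows the paper's proof step for step: the same threshold $\tilde\Delta$, the trivial range, Lemma~\ref{Kusmin_Landau} for $T\le c_0KY_1^{\theta-1}$, Theorem~8.16 of Iwaniec--Kowalski in $\ell$ followed by Lemma~\ref{Corput} in $n$, and the same bookkeeping. The exponent arithmetic is correct. However, the step you single out as the main obstacle fails as you have set it up.

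\textbf{The gap: with $m$ fixed, $\log\eta$ is not convex in $n$.} Hold $m$ fixed and write $x=n/m$, $a=\theta/(1-\theta)$. A direct computation gives
\[
(\log\eta)_{nn}=\frac{a\bigl(1-(1+a)x^{a}\bigr)}{n^{2}(1-x^{a})^{2}},
\]
which vanishes at $x_0=(1+a)^{-1/a}$.
\begin{itemize}
\item For $\theta\le\frac12$ (i.e.\ $a\le 1$) one has $(1+a)^{1/a}\in[2,e)$. Hence $x_0\in(e^{-1},\frac12]$ lies inside the admissible range $n\le m/2$, and $(\log\eta)_{nn}<0$ for $x_0<x\le\frac12$.
\item Continuity/compactness only gives upper bounds, not the lower bound you need.
\item In the diagonal case the lower bound came from log-convexity of $m\mapsto m^{-a}-(m+r)^{-a}$, where both endpoints move. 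That convexity is lost once $m$ is frozen.
\end{itemize}
Near $x_0$ you therefore only have $\Phi_{nn},\Phi^{*}_{nn}=O\bigl(T/(n^{2}\log Y_1)\bigr)$, coming from the $F$-terms. So Lemma~\ref{Corput} with $\lambda_2=T/M_1^{2}$ is not applicable there. The monotonicity of $\Phi_n$ that you need for Lemma~\ref{Kusmin_Landau} also requires a separate argument.

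\textbf{The repair: hold $r=m-n$ fixed.} Do literally what the diagonal proof does: write $m=n+r$ and keep $r$ fixed.
\begin{itemize}
\item Since $n\le m/2$, you get $r\ge n$ and $r\ge m/2$, so $r\asymp M$. For each $r$ the admissible $n$ form an interval.
\item Write $\eta=a\int_0^r(n+s)^{-a-1}\,ds$ and $I_k=\int_0^r(n+s)^{-k}\,ds$. Cauchy--Schwarz, $I_{a+2}^2\le I_{a+1}I_{a+3}$, gives
\[
(\log\eta)_{nn}\ge(1+a)\frac{I_{a+3}}{I_{a+1}}\ge a(1+a)2^{-a-3}n^{-2},
\]
uniformly for $r\ge n$.
\item The bounds for $\eta_n/\eta$, $F_n$, $F_{nn}$ carry over with $y=n/(n+r)\in(0,\frac12]$, and $F_\ell$ is still independent of $n$.
\item Summing trivially over $r\ll M$ instead of over $m$ gives exactly your bounds.
\end{itemize}
This is the only reading under which the paper's one-line substitution ``$(m,m+r)\to(n,m)$'' yields its claimed estimate for $(\log\eta)_{nn}$.

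A minor point: here $\ell\asymp Y_1^{\theta}$ independently of $K$, so $F\asymp\log Y_1$ needs no hypothesis on $K$. Your remark that the $\ell$-range is tiny when $K>Y_1^{1-\varepsilon}$ is carried over from the diagonal case and is not true here, though it is harmless.
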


\begin{proof}
The proof is very similar to that of Proposition~\ref{count_small_theta_diag}. As before, we split into two cases.

\vspace{2ex}

\textit{Case 1. \(\Delta \ge K^{-2/3}Y_1^{-4\theta/3+2/3}\).}

Here we may assume that \(KY_1^{\theta-1},KY_2^{\theta-1}\gg 1\), since otherwise the result is trivial. Applying the same steps as in the previous proposition, we obtain
\[
    J_{K,Y_1,Y_2,\Delta} \ll
    \Delta \int_0^{\Delta^{-1}}
    \Big| \sum_{\ell,m,n}
        e\Big(-v\big(
                \log F(\ell,\eta,\theta)
                +\frac{1}{\theta}\log \ell
                +\big(1-\frac{1}{\theta}\big)\log \eta
            \big)
        \Big)
    \Big|^2 dv.
\]
We then decompose the integral into
\[
    I_{K,Y_1,Y_2,\Delta}^{(0)} :=
    \Delta \int_0^1 |\ldots|^2 dv, \qquad
    I_{K,Y_1,Y_2,\Delta}^{(T)} :=
    \Delta \int_T^{2T} |\ldots|^2 dv,
    \quad T=1,2,4,\ldots.
\]
The integral over \([0,1]\) is bounded trivially:
\begin{equation} \label{trivial_integral_2}
    I_{K,Y_1,Y_2,\Delta}^{(0)} \ll
    \Delta Y_1^{2\theta}(K Y_1^{\theta-1})^2(K Y_2^{\theta-1})^2 =
    \Delta K^4Y_1^{4\theta-2}Y_2^{2\theta-2}.
\end{equation}

Next, introduce the phase
\begin{equation} \label{def_Phi_2}
    \Phi(\ell;v,\eta,\theta) :=
    -v\Big(
        \log F(\ell,\eta,\theta)
        +\frac{1}{\theta}\log \ell
        +\Big(1-\frac{1}{\theta}\Big)\log \eta
    \Big).
\end{equation}
Set
\[
    L:=Y_1^{\theta}, \qquad N:=K Y_1^{\theta-1}, \qquad M:=K Y_2^{\theta-1}.
\]

The derivative estimates needed below are proved exactly as in the proof of Proposition~\ref{count_small_theta_diag}, with \((m,m+r)\) replaced by \((n,m)\) and \(\frac{m}{m+r}\in [\frac{1}{101},1)\) replaced by \(\frac{n}{m}\in (0,\frac12]\). In particular, one has
\begin{equation} \label{eta_n_over_eta_estimate_2}
    \frac{\eta_n}{\eta}\asymp \frac{1}{n},
\end{equation}
\begin{equation} \label{estimate_for_log_eta_2}
    (\log \eta)_{nn}\asymp_{\theta} \frac{1}{n^2},
\end{equation}
and
\begin{equation} \label{F_n_estimate_2}
    F_n=O\Big(\frac{1}{n}\Big), \qquad
    F_{nn}=O\Big(\frac{1}{n^2}\Big).
\end{equation}

\vspace{2ex}

\textit{Range 1: \(1\le T\le c_0 N \).}

We again assume that $c_0>0$ is sufficiently small. As before, we have \(F(\ell,\eta,\theta)\gg_{\eps}\log Y_1\). Hence
\[
    \Phi_n(\ell;v,\eta,\theta) =
    -v\Big( \frac{F_n}{F} +
        \Big(1-\frac{1}{\theta}\Big)\frac{\eta_n}{\eta} \Big) \asymp \frac{T}{n}
    \asymp \frac{T}{N}.
\]
Since the derivative is monotone and has absolute value \(<1\) in this range, Lemma~\ref{Kusmin_Landau} applies to the sum over \(n\), giving
\[
    \sum_{n\asymp N} e\big(\Phi(\ell;v,\eta,\theta)\big)
    \ll \frac{N}{T}.
\]
Therefore
\[
    I_{K,Y_1,Y_2,\Delta}^{(T)} \ll
    \Delta T \Big( \sum_{\ell,m} \frac{N}{T}
    \Big)^2 \ll \Delta T
    \Big( Y_1^{\theta}\cdot K Y_2^{\theta-1}\cdot \frac{K Y_1^{\theta-1}}{T} \Big)^2.
\]
Summing over dyadic \(T\) in this range, we obtain
\begin{equation} \label{KL_contribution_2}
    \sum_{\substack{T \text{ dyadic} \\ 1\le T\le c_0 N }}
    I_{K,Y_1,Y_2,\Delta}^{(T)} \ll
    (K Y_1 Y_2)^{\eps}\Delta K^4Y_1^{4\theta-2}Y_2^{2\theta-2}.
\end{equation}

\vspace{1ex}

\textit{Range 2: \(c_0 N \le T\ll \Delta^{-1}\).}

Here we apply Poisson summation in \(\ell\), and then use Lemma~\ref{Corput} in \(n\). Since \(F(\ell,\eta,\theta)\gg_{\eps}\log Y_1\), one easily verifies
\[
    F_{\ell}\asymp \frac{1}{\ell}\asymp \frac{1}{Y_1^{\theta}}, \qquad
    F_{\ell\ell}\asymp -\frac{1}{\ell^2}\asymp -\frac{1}{Y_1^{2\theta}},
\]
and hence
\[
    \Phi_{\ell\ell}(\ell;v,\eta,\theta) =
    -v\Big(\frac{F_{\ell\ell}}{F}-\frac{F_{\ell}^2}{F^2}\Big)+\frac{v}{\theta \ell^2} \asymp \frac{T}{Y_1^{2\theta}}.
\]
Similarly,
\[
    \Phi_{\ell\ell\ell}\asymp \frac{T}{Y_1^{3\theta}}, \qquad
    \Phi_{\ell\ell\ell\ell}\asymp \frac{T}{Y_1^{4\theta}}.
\]

Applying~\cite[Theorem~8.16]{IK}, we obtain
\begin{equation} \label{IK_816_2}
    \sum_{\ell\asymp Y_1^{\theta}} e\big(\Phi(\ell;v,\eta,\theta)\big) =
    \sum_{a<\nu<b}
    \frac{1}{(\Phi_{\ell\ell}(u_0))^{1/2}}
    e\Big(\Phi(u_0)-\nu u_0+\frac{1}{8}\Big) +
    R_{\Phi}(K,Y_1,Y_2),
\end{equation}
where \(u_0=u_0(\nu,n,m,v)\),
\begin{gather}
    \Phi_{\ell}(u_0,n)=\nu, \label{def_nu_2} \\
    R_{\Phi}(K,Y_1,Y_2)\ll \frac{Y_1^{\theta}}{T^{1/2}}+\log(b-a+1), \label{R_Phi_bound_2} \\
    b-a \ll v\Big(\Big|\frac{F_{\ell}}{F}(\ell,\eta,\theta)\Big|+\frac{1}{\ell}\Big)\ll \frac{T}{Y_1^{\theta}}. \label{b_a_bound_2}
\end{gather}
Thus
\begin{equation} \label{weight_estimate_2}
    \frac{1}{\sqrt{\Phi_{\ell\ell}(u_0)}}\asymp \frac{Y_1^{\theta}}{T^{1/2}}.
\end{equation}

The contribution of the error term is
\[
    \Delta \int_T^{2T} \Big|
        \sum_{m,n} R_{\Phi}(K,Y_1,Y_2) \Big|^2 dv
    \ll \Delta T (N M)^2
    \Big(
        \frac{Y_1^{2\theta}}{T}+(\log KT)^2
    \Big).
\]
Summing over dyadic \(T\) in the present range, we obtain
\begin{multline} \label{R_Phi_contribution_2}
    \sum_{\substack{T \text{ dyadic} \\ c_0 N \le T\ll \Delta^{-1}}}
    \Delta \int_T^{2T}
    \Big|
        \sum_{m,n} R_{\Phi}(K,Y_1,Y_2)
    \Big|^2 dv
    \ll \\
    (K Y_1 Y_2)^{\eps}
    \Big(
        \Delta K^4Y_1^{4\theta-2}Y_2^{2\theta-2} +
        K^4Y_1^{2\theta-2}Y_2^{2\theta-2}
    \Big).
\end{multline}

Next, let
\[
    \Phi^{\ast}(n):=\Phi(u_0(n),n)-\nu u_0(n)+\frac{1}{8}.
\]
Differentiating the identity \(\Phi_{\ell}(u_0(n),n)=\nu\) with respect to \(n\), we get
\[
    \Phi_{\ell\ell}(u_0,n)\frac{du_0}{dn}+\Phi_{\ell n}(u_0,n)=0,
\]
and therefore
\[
    \frac{du_0}{dn}=-\frac{\Phi_{\ell n}(u_0,n)}{\Phi_{\ell\ell}(u_0,n)}.
\]
Also,
\[
    \frac{d\Phi^{\ast}}{dn} =
    \Phi_{\ell}(u_0,n)\frac{du_0}{dn}+\Phi_n(u_0,n)-\nu\frac{du_0}{dn} =
    \Phi_n(u_0,n),
\]
hence
\[
    \frac{d^2\Phi^{\ast}}{dn^2} =
    \Phi_{nn}(u_0,n)-\frac{\Phi_{\ell n}(u_0,n)^2}{\Phi_{\ell\ell}(u_0,n)}.
\]

Now
\[
    \Phi_{nn} =
    -v\Big(
        \frac{F_{nn}}{F}
        -\frac{F_n^2}{F^2}
        +\Big(1-\frac{1}{\theta}\Big)(\log \eta)_{nn}
    \Big),
\]
and by~\eqref{F_n_estimate_2} and~\eqref{estimate_for_log_eta_2} this implies
\[
    \Phi_{nn}\asymp_{\theta} \frac{T}{n^2}.
\]
Also,
\[
    \Phi_{\ell n} =
    -v\Big(\frac{F_{\ell}}{F}\Big)_n =
    v\frac{F_{\ell}F_n}{F^2},
\]
since \(F_{\ell}\) does not depend on \(n\). Therefore
\[
    \Phi_{\ell n} =
    O\Big( \frac{T}{\ell n(\log Y_1)^2} \Big).
\]
Since \(\Phi_{\ell\ell}\asymp \frac{T}{\ell^2}\), we obtain
\[
    \frac{\Phi_{\ell n}^2}{\Phi_{\ell\ell}} =
    O\Big( \frac{T}{n^2(\log Y_1)^4} \Big),
\]
which is negligible compared with \(\Phi_{nn}\asymp \frac{T}{n^2}\). Hence
\[
    \frac{d^2\Phi^{\ast}}{dn^2} \asymp
    \frac{T}{n^2} \asymp
    \frac{T}{(K Y_1^{\theta-1})^2}.
\]

Applying Lemma~\ref{Corput} with
\[
    \lambda_2:=\frac{T}{(K Y_1^{\theta-1})^2}
\]
to the sum over \(n\), we get
\[
    \sum_{n\asymp K Y_1^{\theta-1}} e\big(\Phi^{\ast}(n)\big) \ll
    K Y_1^{\theta-1}\lambda_2^{\frac12}+\lambda_2^{-\frac12}
    \ll T^{\frac12}+ K Y_1^{\theta-1} T^{-\frac12}.
\]
Therefore, using~\eqref{IK_816_2}, \eqref{b_a_bound_2}, and~\eqref{weight_estimate_2}, we get
\begin{multline*}
    \Delta \int_T^{2T}
    \Big|
        \sum_{m\asymp K Y_2^{\theta-1}}
        \sum_{a<\nu<b}
        \frac{1}{(\Phi_{\ell\ell}(u_0))^{1/2}}
        \sum_{n\asymp K Y_1^{\theta-1}}
        e\big(\Phi^{\ast}(n)\big)
    \Big|^2 dv
    \ll \\ \Delta T
    \Big(
        K Y_2^{\theta-1}
        \cdot \frac{T}{Y_1^{\theta}}
        \cdot \frac{Y_1^{\theta}}{T^{1/2}}
        \cdot \Big( T^{1/2}+\frac{K Y_1^{\theta-1}}{T^{1/2}} \Big)
    \Big)^2 \ll
    \Delta T (K Y_2^{\theta-1})^2 \big( T+K Y_1^{\theta-1} \big)^2.
\end{multline*}

Summing over dyadic \(T\) gives
\[
    \sum_{\substack{T \text{ dyadic} \\ c_0 N \le T\ll \Delta^{-1}}}
    \Delta \int_T^{2T} |\ldots|^2\,dv \ll
    (K Y_2^{\theta-1})^2
    \big(
        \Delta^{-2}+(K Y_1^{\theta-1})^2
    \big).
\]
By the current assumption,
\[
    \Delta^{-2}\le \Delta K^2Y_1^{4\theta-2},
\]
and hence
\[
    (K Y_2^{\theta-1})^2
    \big(
        \Delta^{-2}+(K Y_1^{\theta-1})^2
    \big)
    \ll \Delta K^4Y_1^{4\theta-2}Y_2^{2\theta-2}
    + K^4Y_1^{2\theta-2}Y_2^{2\theta-2}.
\]

Collecting together~\eqref{trivial_integral_2}, \eqref{KL_contribution_2}, \eqref{R_Phi_contribution_2}, and the last estimate, we find
\begin{equation} \label{final_bound_2}
    J_{K,Y_1,Y_2,\Delta} \ll
    (K Y_1 Y_2)^{\eps}
    \Big(
        \Delta K^4Y_1^{4\theta-2}Y_2^{2\theta-2} +
        K^4Y_1^{2\theta-2}Y_2^{2\theta-2}
    \Big).
\end{equation}

\vspace{2ex}

\textit{Case 2. \(\Delta < K^{-2/3}Y_1^{-4\theta/3+2/3}\).}

Here we again use monotonicity, with
\[
    \widetilde{\Delta}:=K^{-\frac23} Y_1^{-\frac{4\theta}{3}+\frac23}.
\]
Then from~\eqref{final_bound_2} we get
\begin{multline*}
    J_{K,Y_1,Y_2,\Delta} \ll
    (K Y_1 Y_2)^{\eps}
    \Big(
        \widetilde{\Delta} K^4Y_1^{4\theta-2}Y_2^{2\theta-2} +
        K^4Y_1^{2\theta-2}Y_2^{2\theta-2}
    \Big) \ll \\
    (K Y_1 Y_2)^{\eps}
    \Big(
        K^4Y_1^{2\theta-2}Y_2^{2\theta-2} +
        K^{\frac{10}{3}}Y_1^{\frac{8\theta}{3}-\frac{4}{3}}Y_2^{2\theta-2}
    \Big),
\end{multline*}
which completes the proof.
\end{proof}

\bibliographystyle{abbrv}
\bibliography{Bib}

\end{document}